\newcommand{\BF}[1]{\mathbf{#1}}
\newcommand{\pos}{\operatorname{pos}_{\mathbb{R}}}
\newcommand{\rank}{\operatorname{rank}}
\newcommand{\BFG}[1]{\bm{#1}}
\newcommand{\semigroup}[1]{\operatorname{pos}_{\mathbb{N}}(#1)}
\newcommand{\lrfan}[1]
{\mathcal{L}\mathcal{R}_{#1}}
\newcommand{\lrcone}[1]{\textsf{H}^+_{#1}}
\newcommand{\lattice}[1]{\mathcal{L}{#1}}
\newtheorem{theo}{Theorem}[section]
\newtheorem{cor}[theo]{Corollary}
\newtheorem{lem}[theo]{Lemma}
\newtheorem{prop}[theo]{Proposition}
\newtheorem{que}[theo]{Question}
\newtheorem{conj}[theo]{Conjecture}
\theoremstyle{definition}
\newtheorem{defn}[theo]{Definition}
\newtheorem{eg}[theo]{Example}
\theoremstyle{remark}
\newtheorem{rem}[theo]{Remark}
\title{External columns and chambers of vector partition functions}
\date{\today}
\author{Stefan Trandafir}
\address{S. Trandafir \\ Department of Mathematics \\
Simon Fraser University\\
8888 University Dr W Burnaby, Canada}
\begin{document}

\begin{abstract}
    The vector partition function $p_A$ associated to a $d \times n$ matrix $A$ with integer entries is the function $\mathbb{Z}^d \to \mathbb{N}$ defined by $\BF{b} \to \#\{\BF{x} \in \mathbb{N}^n : A\BF{x} = \BF{b}\}$. It is known that vector partition functions are piecewise quasi-polynomials whose domains of quasi-polynomiality are maximal cones (chambers) of a fan called the chamber complex of $A$.
    In this article we introduce \emph{external columns} and \emph{external chambers} of vector partition functions. Our main result is that (up to a saturation condition) the quasi-polynomial associated to a chamber containing external columns arises from a vector partition function with $k$ fewer equations and variables. In the case that the chamber is external -- that is, when the number of external columns in a chamber is as large as possible without being trivial -- the quasi-polynomial arises from a coin exchange problem. By exploiting this we are able to obtain a determinantal formula, characterize when the quasi-polynomial is polynomial, and show that in this case it is actually given by a negative binomial coefficient. We then apply these results to the enumeration of loopless multigraphs satisfying some degree conditions. Finally, we suggest a generalization to a result of Baldoni and Vergne for polynomials arising from chambers that we call \emph{semi-external chambers}. 
\end{abstract}

\maketitle

\section{Introduction} \label{sec:introduction}
The vector partition enumeration problem is the natural $d$-dimensional analogue of the well-studied coin exchange problem (see for example \cite[Chapter 1]{BeRo15}). More precisely, for some full rank $d \times n$ matrix $A$ with integer entries, the number of vector partitions of $\BF{b} \in \mathbb{Z}^d$ is the number of solutions $\BF{x} \in \mathbb{N}^n$ of the equation $A\BF{x} = \BF{b}$. If this number is finite for all $\BF{b} \in \mathbb{Z}^d$, we say that the function $p_A : \mathbb{Z}^d \to \mathbb{N}$ yielding the number of vector partitions of $\BF{b}$ for all $\BF{b} \in \mathbb{Z}^d$ is the vector partition function associated to the matrix $A$.

Vector partition functions appear in many different contexts with applications in statistics, representation theory, algebraic geometry, and more. More concretely, the enumeration of Contingency Tables \cite{DeSt03}, the computation of Littlewood-Richardson coefficients \cite{Ra04}, and of Kronecker coefficients \cite{MiRoSu21}, constitute some examples where vector partition functions play a major role.

It is known that the vector partition function is a piecewise quasi-polynomial whose domains of quasi-polynomiality are the maximal cones (chambers) of a fan \cite{Stur94}. In this article we introduce the notion of \emph{external columns} of $A$ and \emph{external chambers} of $A$. Our main result (Theorem \ref{theo:external-col-variables}) is that (up to a saturation condition) one may compute the quasi-polynomial for a chamber $\gamma$ (henceforth denoted $p_A^{\gamma}$) containing external columns by computing a quasi-polynomial for a corresponding chamber in a simpler vector partition function. More precisely, if there are $k$ external columns of $A$ in $\gamma$, then the quasi-polynomial arises from a vector partition function $p_B$ for some matrix $B$ with $k$ fewer rows and columns than $A$. 

In the case that $\gamma$ is an external chamber (contains the maximal amount of external columns without being trivial), Theorem \ref{theo:external-col-variables} implies that the quasi-polynomial associated to $\gamma$ arises from a coin exchange problem. Through this observation we are able characterize when the quasi-polynomial associated to $\gamma$ is actually polynomial. Moreover, in these instances the formulae are particularly simple: they are given by negative binomial coefficients (Theorem \ref{theo:external-chamber-binom}). Unimodular matrices satisfy the required saturation conditions, and so we immediately obtain results for two notions of unimodularity (Corollaries \ref{cor:det-unimodular} and \ref{cor:unimodular}). 

By identifying external chambers in a family of vector partition fuctions arising from the enumeration of multigraphs, we are able to obtain explicit counting formulae for loopeless multigraphs satisfying certain degree requirements (Theorem \ref{theo:multigraph}). 

We also general set of chambers (of which the external chambers take part) that we call \emph{semi-external chambers} and suggest a generalization of a result of Baldoni \& Vergne (Conjecture \ref{conj:lin-factors}) related to linear factors of polynomials of $p_A$. This could potentially be used to better understand the vector partition functions associated to the enumeration of multigraphs, as well as in the analysis of certain polynomials associated to the Littlewood-Richardson coefficients. 

The article is organized as follows: in Section \ref{sec:geometry} we recall background on cones, fans, and vector partition functions. In Section \ref{sec:external-cols-chambers}, we introduce external columns and chambers and derive some useful properties associated to them. Next, in Section \ref{sec:dim-reduction}, we derive our main result: Theorem \ref{theo:external-col-variables}. In Section \ref{sec:external-chamber-case} we consider the case of external chambers in more detail in order to derive a determinantal formula (Theorem \ref{theo:ehrhart-det}), as well as to characterize when the quasi-polynomial associated to such a chamber is actually polynomial, in which case we show that it is actually given by a negative binomial coefficient (Theorem \ref{theo:external-chamber-binom}). We further show that this formula can be simplified if the given matrix is unimodular (Corollary \ref{cor:unimodular}). In Section \ref{sec:multigraph} we give an application of our work to the enumeration of multigraphs. In Section \ref{sec:lem-proof} we prove the main ingredient of Theorem \ref{theo:external-col-variables} -- a technical result: Lemma \ref{lem:dim-reduction}. In Section \ref{sec:lin-factors} we consider a more general set of chambers (of which the external chambers take part) that we call \emph{semi-external chambers}. Finally, in Section \ref{sec:future} we discuss possible generalizations and applications of our work. 

\section{Background} \label{sec:geometry}

\subsection{Cones} \label{subsec:cones}
We mostly follow Cox-Little-Schenck \cite{CoLiSc11} and Fulton \cite{Fu93} in this section. If the reader is familiar with cones, we advise them to read Remark \ref{rem:notation} before skipping to the following section.

A \emph{convex polyhedral cone} in $\mathbb{R}^d$ is a set of the form 
\[
\sigma = \pos(\BF{u}_1, \dots, \BF{u}_k) = \{\lambda_1\BF{u}_1 + \lambda_2\BF{u}_2 + \dots + \lambda_k\BF{u}_k : \lambda_1, \lambda_2, \dots, \lambda_k \geq 0\}
\]
and we say that $\sigma$ is \emph{generated} by $\BF{u}_1, \dots, \BF{u}_k$, and that $\BF{u}_1, \dots, \BF{u}_k$ are \emph{generators} of $\sigma$. 
We also define $\pos(\emptyset) = \{\BF{0}\}$. The \emph{dimension} of cone $\sigma$ is the dimension of the subspace generated by $\{\BF{u}_1, \dots, \BF{u}_k\}$.  
Let $\sigma$ be a convex polyhedral cone in $\mathbb{R}^d$. We say that the set
\[
\sigma^{\vee} := \{ \BF{m} \in \mathbb{R}^d : \BF{m} \cdot \BF{u} \geq 0 \hbox{ for all } \BF{u} \in \sigma\}
\]
is the \emph{dual cone of $\sigma$}. Note that although it is not immediately obvious from the defintion, $\sigma^{\vee}$ is also a convex polyhedral cone. A convex polyhedral cone $\sigma$ is a \emph{pointed convex polyhedral cone} (or \emph{strongly convex polyhedral cone}) if it contains no $1$-dimensional subspace of $\mathbb{R}^d$, and $\sigma$ is \emph{rational} if it can be generated by a finite number of integer points. In general in this article, we deal with rational strongly convex polyhedral cones, so we simply call these \emph{cones} for short.

For $\BF{m} \in \mathbb{R}^d$, $\BF{m} \neq \BF{0}$, let
\[
H_{\BF{m}} := \{\BF{u} \in \mathbb{R}^d : \BF{m} \cdot \BF{u} = 0\}
\]
and 
\[
H_{\BF{m}}^+ := \{\BF{u} \in \mathbb{R}^d : \BF{m} \cdot \BF{u} \geq 0\}.
\]
We say that $H_{\BF{m}}$ is a \emph{supporting hyperplane} of a cone $\sigma$ if $\sigma \subseteq H_{\BF{m}}^+$, and in this case we also say that $H_{\BF{m}}^+$ is a \emph{supporting half-space} of $\sigma$. A \emph{face} of cone $\sigma$ is a set of the form $H_{\BF{m}} \cap \sigma$. A \emph{ray} (or \emph{edge}) of $\sigma$ is a face of dimension 1 and a \emph{facet} of $\sigma$ is a face of codimension 1. We also use the term ray on its own to refer to any $1$-dimensional cone. Any proper face of $\sigma$ is the intersection of all the facets containing it.

Given a ray $r$, we say that $\BF{w} = (w_1, \dots, w_d)$ is a \emph{ray generator} of $r$ if $r = \pos(\BF{w})$. If additionally, $\BF{w} \in \mathbb{Z}^d$ and $\gcd(\{w_i : 1 \leq i \leq d\}) = 1$, then we say that $\BF{w}$ is the minimal ray generator of $r$. Any set of generators of a cone $\sigma$ contains some minimal subset of generators, which still generate $\sigma$. This subset consists of exactly the vectors which generate the edges of $\sigma$. We call such a subset a \emph{minimal generating set} of $\sigma$, and call its elements \emph{ray generators of $\sigma$}. If a minimal generating set $S$ of $\sigma$ additionally has the property that each element is a minimal ray generator, then we say that $S$ is the set of \emph{minimal ray generators} of $\sigma$. Each cone $\sigma$ has a unique set of minimal ray generators. We say that $\sigma$ is \emph{simplicial} if its set of ray generators is linearly independent.

\begin{rem} \label{rem:notation}
    We remark here that we have deviated slightly from the notation of \cite{CoLiSc11} and \cite{Fu93}. In \cite{Fu93} what we call ``ray generators'' are called ``minimal generators''. In \cite{CoLiSc11} what we refer to as the ``minimal ray generator'' is simply called the ``ray generator'', and what we call ``ray generators'' of $\sigma$ are called ``minimal generators'' of $\sigma$.  
    Our choice is dictated by our introduction of the terms ``external ray generators'' and ``minimal external ray generators'' for which either notation scheme (i.e that of \cite{CoLiSc11} or \cite{Fu93})  would cause confusion. 
\end{rem}

For a cone $\sigma$ with supporting hyperplane $H$, we say that $\BF{u}$ is an \emph{inner facet normal} of $\sigma$ if $\BF{u} \in H^+$ is normal to $H$, and that $\BF{u}$ is an \emph{outer facet normal} of $\sigma$ if $\BF{u} \in H^-$ is normal to $H$. The following result is given in \cite[Proposition 1.2.8]{CoLiSc11} and the discussion appearing immediately after Proposition 1.2.8.

\begin{prop}[Cox-Little-Schenck, Proposition 1.2.8] \label{prop:facet-normal-dual-ray}
	Let $\sigma \subseteq \mathbb{R}^d$ be a polyhedral cone such that $\sigma = H_{\BF{m}_1}^+ \cap  H_{\BF{m}_2}^+ \cap \dots \cap H_{\BF{m}_s}^+$. Then $\sigma^{\vee} = \pos(\BF{m}_1, \BF{m}_2, \dots,\BF{m}_s)$. In particular, $\BF{m}_1, \BF{m}_2, \dots, \BF{m}_s$ are the inner facet normals of $\sigma$ if and only if $\BF{m}_1, \BF{m}_2, \dots, \BF{m}_s$ generate the rays of $\sigma^{\vee}$.
\end{prop}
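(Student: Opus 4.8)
The plan is to derive the proposition from two standard facts about polyhedral cones: the biduality theorem $(\tau^{\vee})^{\vee}=\tau$, and the inclusion-reversing correspondence between the faces of a cone and the faces of its dual, both available from \cite[\S1.2]{CoLiSc11}.

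\textbf{The equality $\sigma^{\vee}=\pos(\BF{m}_1,\dots,\BF{m}_s)$.} Set $\tau:=\pos(\BF{m}_1,\dots,\BF{m}_s)$. The inclusion $\tau\subseteq\sigma^{\vee}$ is immediate: the hypothesis $\sigma\subseteq H_{\BF{m}_i}^{+}$ says precisely that $\BF{m}_i\cdot\BF{u}\geq 0$ for every $\BF{u}\in\sigma$, i.e. $\BF{m}_i\in\sigma^{\vee}$, and $\sigma^{\vee}$ is closed under nonnegative combinations. For the reverse inclusion I would pass to duals: straight from the definition, $\tau^{\vee}=\bigcap_{i=1}^{s}H_{\BF{m}_i}^{+}=\sigma$, because a vector pairs nonnegatively with every element of $\pos(\BF{m}_1,\dots,\BF{m}_s)$ exactly when it pairs nonnegatively with each $\BF{m}_i$. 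Since $\tau$ is a polyhedral cone it is closed and convex, so biduality gives $\tau=(\tau^{\vee})^{\vee}=\sigma^{\vee}$, which is the desired equality. This is where the remark in the text that ``$\sigma^{\vee}$ is again a convex polyhedral cone'' does its work.

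\textbf{The ``in particular'' clause.} With $\sigma^{\vee}=\pos(\BF{m}_1,\dots,\BF{m}_s)$ in hand, I would invoke the face anti-isomorphism $F\mapsto F^{*}:=\{\BF{m}\in\sigma^{\vee}:\BF{m}\cdot\BF{u}=0\text{ for all }\BF{u}\in F\}$ between faces of $\sigma$ and faces of $\sigma^{\vee}$, under which facets (codimension-$1$ faces) of $\sigma$ correspond to edges of $\sigma^{\vee}$ and vice versa. Suppose first that the $\BF{m}_i$ are the inner facet normals of $\sigma$, with facets $F_i=\sigma\cap H_{\BF{m}_i}$. Then $\BF{m}_i\cdot\BF{u}=0$ for all $\BF{u}\in F_i$, so $\BF{m}_i\in F_i^{*}$, and $F_i^{*}$ is an edge of $\sigma^{\vee}$; since $\BF{m}_i\in\sigma^{\vee}=\pos(\BF{m}_1,\dots,\BF{m}_s)$ this edge is $\pos(\BF{m}_i)$. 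As $F\mapsto F^{*}$ is a bijection carrying the facets of $\sigma$ onto the edges of $\sigma^{\vee}$, the rays $\pos(\BF{m}_1),\dots,\pos(\BF{m}_s)$ are exactly the edges of $\sigma^{\vee}$, i.e. the $\BF{m}_i$ generate the rays of $\sigma^{\vee}$. Conversely, if the $\BF{m}_i$ generate the rays of $\sigma^{\vee}$, then each $\pos(\BF{m}_i)$ is an edge $e_i$ of $\sigma^{\vee}$, its dual face $e_i^{*}=\sigma\cap H_{\BF{m}_i}$ is a facet of $\sigma$ with $\sigma\subseteq H_{\BF{m}_i}^{+}$, so $\BF{m}_i$ is an inner facet normal, and bijectivity again shows these exhaust the inner facet normals.

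\textbf{Main obstacle.} The only substantive ingredient is biduality $(\tau^{\vee})^{\vee}=\tau$, whose proof is a hyperplane-separation (Farkas) argument; I would cite it rather than reprove it. Beyond that, the work is bookkeeping with dimensions and faces. The one point requiring care is degeneracy: if $\sigma$ is not full-dimensional then $\sigma^{\vee}$ contains $(\operatorname{span}\sigma)^{\perp}$ and is not pointed, so ``rays of $\sigma^{\vee}$'' and ``inner facet normals'' must be read modulo that lineality space; and if the presentation $\sigma=\bigcap_{i}H_{\BF{m}_i}^{+}$ is redundant then some $\BF{m}_i$ fails to define a facet, in which case neither side of the biconditional holds. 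In the setting of this paper the relevant cones are full-dimensional and these issues do not arise.
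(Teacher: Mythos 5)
The paper gives no proof of this proposition — it is quoted verbatim from Cox--Little--Schenck, Proposition 1.2.8 and the discussion following it. Your argument (biduality $(\tau^{\vee})^{\vee}=\tau$ for the equality $\sigma^{\vee}=\pos(\BF{m}_1,\dots,\BF{m}_s)$, then the inclusion-reversing face correspondence for the facet-normal/ray statement) is correct and is essentially the proof given in that cited source, including your appropriate caveats about non-full-dimensional cones and redundant presentations, which do not arise in the paper's applications.
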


A \emph{fan} $\Sigma$ is a set of cones such that if $\sigma \in \Sigma$, then every face of $\sigma$ is in $\Sigma$, and the intersection of $\sigma_1, \sigma_2 \in \Sigma$ is a face of both $\sigma_1$ and $\sigma_2$. Maximal cones of $\Sigma$ are called \emph{chambers}.

\subsection{Vector partition functions} \label{subsec:vpf}

Here we give a brief primer on vector partition functions, generally following the notation of \cite{Stur94}. My $\mathbb{N}$ we denote the set of non-negative integers $\{0, 1, 2, \dots, \}$. Throughout this document $A$ will denote a $ d \times n$ matrix of rank $d$ with integer entries and 
\begin{equation*}
\ker(A) \cap \mathbb{R}^n_{\geq 0} = \{\BF{0}\}.
\end{equation*}
We begin by recalling the definition of the vector partition function.
\begin{defn}
The \emph{vector partition function of $A$}
\begin{equation*}
p_A : \mathbb{Z}^d \to \mathbb{N}
\end{equation*}
is defined by
\begin{equation*}
    p_A(\BF{b}) := \#\{\BF{x} \in \mathbb{N}^n : A\BF{x} = \BF{b}\}.
\end{equation*}
\end{defn}

One can view this problem as enumerating the number of ``partitions'' of the vector $\BF{b}$ whose parts are the columns of $A$ -- hence the name vector partition function.

\begin{rem}
    Recall that the condition $\ker(A) \cap \mathbb{R}^n_{\geq 0} = \{\BF{0}\}$ is imposed so that $p_A$ is indeed a function. Otherwise we may have that $p_A(\BF{b})$ is not finite for some $\BF{b} \in \mathbb{Z}^d$.
\end{rem}

Given a matrix $M$ with columns $\BF{m}_1, \BF{m}_2, \dots, \BF{m}_k$, we define the \emph{cone associated to $M$}, denoted $\pos(M)$ to be the set $\{\lambda_1\BF{m}_1 + \lambda_2\BF{m}_2 + \dots + \lambda_k\BF{m}_k : \lambda_1,\dots,\lambda_k \geq 0\}$. In words, $\pos(M)$ is the cone generated by the columns of $M$.\footnote{Some authors use $\pos(M)$ to indicate the cone generated by the rows of $M$.}
For each $s \subseteq \{1, 2, \dots, n\}$, we define $A_{s}$ to be the submatrix of $A$ composed of the columns of $s$. For each $s$ satisfying $|s|~=~\rank(A_s)~=~\rank(A)~=~d$, we say that $\pos(A_s)$ is a \emph{simplicial cone of~$A$}. The \emph{chamber complex} of $A$ is the fan obtained as the common refinement of the simplicial cones of $A$ (viewed as fans). 
Explicitly, the chamber complex is the set of cones
\begin{equation*}
    \left\{\bigcap_{\substack{s \subseteq \{1, 2, \dots, n\} \\ |s|~=~\rank(A_s)~=~d \\ \mathbf{b} \in \pos(A_s)}} \pos(A_s) :  \mathbf{b} \in \pos(A)\right\}
\end{equation*}
along with all of their faces. In words, for each $\BF{b} \in \pos(A)$, form the cone obtained by taking the intersection of all simplicial cones containing $\BF{b}$. The chamber complex is exactly the set of cones obtained along with their faces.

The cones of maximal dimension of the chamber complex are called \emph{geometrical chambers} -- we call them \emph{chambers} for short\footnote{In the literature chambers are usually defined to be the interiors of what we call chambers (i.e the maximal cells of the chamber complex). We use this terminology since we are, much more often than not, interested in the closed sets and not their interiors.}. Equivalently, these are the cones of dimension $d$ of the chamber complex.

We list some facts about the chamber complex that we exploit throughout this thesis. These facts can be derived directly from the definition of the chamber complex.
\begin{enumerate}
    \item Any chamber is exactly the intersection of all simplicial cones containing it. 
    \item For all $\BF{b} \in \pos(A)$, the intersection of all simplicial cones containing $\BF{b}$ ($\operatorname{cone}(\BF{b})$) is a cone of the chamber complex -- in particular if the intersection is $d$-dimensional, it is a chamber.
    \item For a given chamber $\gamma$, if $\BF{b} \in \gamma^{\circ}$ (where $\gamma^{\circ}$ denotes the interior of $\gamma$), then $\gamma$ is the intersection of all simplicial cones containing $\BF{b}$.
    \item If a chamber $\gamma$ of $A$ intersects a simplicial cone $\sigma$ of $A$ $d$-dimensionally, then $\gamma \subseteq \sigma$.
    \item Let $j \in \{1, \dots, n\}$. If column $\BF{a}_j$ is in a chamber $\gamma$, then $\BF{a}_j$ is a ray generator of $\gamma.$
\end{enumerate}

The vector partition function $p_A$ can be described explicitly as a piecewise function whose domains are the chambers of $A$. The functions that are valid on the chambers are \emph{quasi-polynomials}, which are finite sums of the form 
\begin{equation*}
    q(z_1, \dots, z_k) = \sum_{(i_1, \dots, i_k) \in S} c_{i_1, \dots, i_k}(z_1, \dots, z_k)z_1^{i_1}, \dots, z_k^{i_k}
\end{equation*}
where $S \subset \mathbb{Z}^k$ is finite and the $c_{i_1, \dots, i_k}$ are non-zero periodic functions in $(z_1, \dots, z_k)$. That is, there exist positive integers $n_1, \dots, n_k$ such that $c_{i_1, \dots, i_k}(z_1, \dots, z_k) = c_{j_1, \dots, j_k}(z'_1, \dots, z'_k)$ whenever $z_{\ell} \equiv z'_{\ell} \mod n_{\ell}$ for $\ell = 1, \dots, k$. 
The \emph{degree} of the quasi-polynomial $q$ is the maximum over the sums $i_1 + \dots + i_k$, and the \emph{period} of $q$ is the minimal positive integer $N$ such that $c_{i_1, \dots, i_k}(z_1, \dots, z_k) = c_{j_1, \dots, j_k}(z_1, \dots, z_k)$ whenever $i_{\ell} \equiv j_{\ell} \mod N$ for $\ell = 1, \dots, k$ for all $(i_1, \dots, i_k) \in S$. We remark that a quasi-polynomial with period equal to one is just a polynomial.
To indicate the quasi-polynomial associated to a chamber~$\gamma$, we write $p_A^{\gamma}$.

The explicit characterization of the form of the vector partition function is due to Sturmfels, and we record it in the following theorem.

\begin{theo}[Sturmfels, 1994 \cite{Stur94}]
    Let $A$ be a $d \times n$ matrix of rank $d$.
    The vector partition function of $A$, $p_A$, is a piecewise quasi-polynomial of degree $n-d$ whose domains of quasi-polynomiality are the maximal cones (chambers) in the chamber complex of $A$.
\end{theo}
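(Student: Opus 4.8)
The plan is to identify $p_A(\BF{b})$ with the number of lattice points in a polytope that varies ``linearly'' as $\BF{b}$ ranges over a fixed chamber, and then to invoke the quasi-polynomiality of such lattice-point counts. For $\BF{b}\in\mathbb{Z}^d$ put $P_{\BF{b}}:=\{\BF{x}\in\mathbb{R}^n_{\geq 0}:A\BF{x}=\BF{b}\}$. Its recession cone is $\ker(A)\cap\mathbb{R}^n_{\geq 0}=\{\BF{0}\}$, so $P_{\BF{b}}$ is a rational polytope, empty unless $\BF{b}\in\pos(A)$, and $p_A(\BF{b})=|P_{\BF{b}}\cap\mathbb{Z}^n|$. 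Since $A\colon\mathbb{Z}^n\to A\mathbb{Z}^n=:L$ is a surjection of free abelian groups it admits a $\mathbb{Z}$-linear section $\BF{b}\mapsto\BF{x}_0(\BF{b})$; fixing also a $\mathbb{Z}$-basis $\BF{k}_1,\dots,\BF{k}_{n-d}$ of $\ker(A)\cap\mathbb{Z}^n$ and setting $K=[\BF{k}_1\ \cdots\ \BF{k}_{n-d}]$, the substitution $\BF{x}=\BF{x}_0(\BF{b})+K\BF{y}$ identifies $P_{\BF{b}}\cap\mathbb{Z}^n$ with $Q_{\BF{b}}\cap\mathbb{Z}^{n-d}$, where $Q_{\BF{b}}:=\{\BF{y}\in\mathbb{R}^{n-d}:K\BF{y}\geq-\BF{x}_0(\BF{b})\}$ is cut out by $n$ inequalities with fixed left-hand sides and right-hand sides depending affine-linearly on $\BF{b}$. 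Thus $p_A(\BF{b})=|Q_{\BF{b}}\cap\mathbb{Z}^{n-d}|$ for $\BF{b}\in L$ and $p_A(\BF{b})=0$ otherwise; since a function that vanishes off the finite-index sublattice $L$ and restricts to a quasi-polynomial on $L$ is a quasi-polynomial on $\mathbb{Z}^d$, it suffices to prove $\BF{b}\mapsto|Q_{\BF{b}}\cap\mathbb{Z}^{n-d}|$ is a quasi-polynomial of degree $n-d$ on each chamber.

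Next I would establish uniformity over a chamber. For $s\subseteq\{1,\dots,n\}$ with $|s|=\rank(A_s)=d$, the point $\BF{x}^s(\BF{b})$ having $s$-entries $A_s^{-1}\BF{b}$ and zeros elsewhere is a vertex of $P_{\BF{b}}$ precisely when $A_s^{-1}\BF{b}\geq\BF{0}$, i.e.\ $\BF{b}\in\pos(A_s)$, and every vertex of $P_{\BF{b}}$ arises this way. Fix a chamber $\gamma$ and let $\mathcal{S}(\gamma)=\{s:\gamma\subseteq\pos(A_s)\}$. By the facts recorded after the definition of the chamber complex (fact (3) together with fact (1)), for every $\BF{b}\in\gamma^\circ$ one has $\{s:\BF{b}\in\pos(A_s)\}=\mathcal{S}(\gamma)$; hence over $\gamma^\circ$ the vertices of $Q_{\BF{b}}$ form the fixed family $\{\BF{y}^s(\BF{b}):s\in\mathcal{S}(\gamma)\}$ obtained by transporting the $\BF{x}^s(\BF{b})$ through the parametrization, each $\BF{y}^s$ affine-linear in $\BF{b}$ with denominator dividing $\det A_s$. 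Since each face of $Q_{\BF{b}}$ is obtained by making a subset of its inequalities tight, and non-emptiness and dimension of the resulting face are again controlled by feasibility of sub-bases, the entire face lattice of $Q_{\BF{b}}$ is constant on $\gamma^\circ$. So $(Q_{\BF{b}})_{\BF{b}\in\gamma^\circ}$ is a parametric polytope of combinatorially fixed type whose vertices move affine-linearly with $\BF{b}$.

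The crux is then to upgrade this to a quasi-polynomial count. Fix once and for all a triangulation of the constant face poset that subdivides each vertex tangent cone into simplicial cones spanned by the vectors $\BF{y}^{s'}(\BF{b})-\BF{y}^s(\BF{b})$; for every $\BF{b}\in\gamma^\circ$ this yields a combinatorially uniform decomposition. Brion's theorem writes $|Q_{\BF{b}}\cap\mathbb{Z}^{n-d}|$ as the value at $\BF{1}$ of a sum over $s\in\mathcal{S}(\gamma)$ of integer-point transforms of the (triangulated) tangent cones at the vertices $\BF{y}^s(\BF{b})$; each such cone is a fixed simplicial cone translated by a vector affine-linear in $\BF{b}$, so expanding its integer-point transform over a fundamental parallelepiped and extracting the relevant constant term by an explicit partial-fraction (residue) computation exhibits each summand as a quasi-polynomial in $\BF{b}$ of degree $\le n-d$ and period dividing $N:=\operatorname{lcm}\{\,|\det A_s| : |s|=\rank(A_s)=d\,\}$. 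Summing gives the quasi-polynomial $p_A^{\gamma}$. Its degree is exactly $n-d$: the top-degree part of $p_A^{\gamma}$ is the lattice-normalized volume $\mathrm{vol}_{n-d}(Q_{\BF{b}})$, which is homogeneous of degree $n-d$ (as $Q_{t\BF{b}}=tQ_{\BF{b}}$ for $t>0$) and strictly positive on $\gamma^\circ$, since there $\BF{b}$ is interior to $\pos(A)$ so $\dim Q_{\BF{b}}=n-d$; for $n=d$ this degenerates to the $\{0,1\}$-valued quasi-polynomial detecting $L$, of degree $0=n-d$. I expect this step — turning ``combinatorially fixed parametric polytope with linearly moving vertices'' into ``quasi-polynomial lattice count'' — to be the real obstacle; the rest is bookkeeping with the chamber complex.

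Finally I would pass to closed chambers and patch. It remains to check $p_A^{\gamma}$ agrees with $p_A$ on all of $\gamma\cap\mathbb{Z}^d$, not merely on $\gamma^\circ\cap\mathbb{Z}^d$. For $\BF{b}\in\partial\gamma$ the family $\{\BF{y}^s(\BF{b}):s\in\mathcal{S}(\gamma)\}$ still contains every vertex of $Q_{\BF{b}}$ (now with possible repetitions and some degenerating tangent cones), and Brion's identity is robust under such redundancy — alternatively one invokes the Lawrence--Varchenko signed cone decomposition, or argues by continuity — so the same formula still computes $p_A(\BF{b})$. Distinct chambers meet only along common faces, on which the common value $p_A$ is returned by either adjacent quasi-polynomial, so the $p_A^{\gamma}$ patch to the asserted global piecewise description. (An alternative to the Brion step is induction on $n-d$: taking $A'=[\BF{a}_1\ \cdots\ \BF{a}_{n-1}]$ of rank $d$ one has $p_A(\BF{b})=\sum_{k\ge 0}p_{A'}(\BF{b}-k\BF{a}_n)$, a finite sum; on each chamber of $A$ both the number of terms and the chambers of $A'$ those terms visit are constant, so summing the relevant chamber quasi-polynomials of $p_{A'}$ over the lattice segment $\{\BF{b}-k\BF{a}_n\}$ again yields a quasi-polynomial, of degree one higher.)
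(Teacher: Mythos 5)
The paper does not prove this statement; it is quoted from Sturmfels \cite{Stur94}, so there is no internal proof to compare against. Judged on its own, your proposal is a correct outline of a standard alternative route: Sturmfels' original argument works with the generating function $\prod_j(1-\BF{z}^{\BF{a}_j})^{-1}$ and a partial-fraction expansion indexed by the column bases (following Dahmen--Micchelli), whereas you pass to the parametric polytope $Q_{\BF{b}}$ and apply Brion's theorem vertex by vertex. Your reduction is sound, and the key uniformity claim is actually cleaner than you state it: for $\BF{b}\in\gamma^{\circ}$ and $s\in\mathcal{S}(\gamma)$ one has $\gamma^{\circ}\subseteq\operatorname{int}(\pos(A_s))$, so $A_s^{-1}\BF{b}>0$ strictly; hence the vertices $\BF{y}^s(\BF{b})$ are pairwise distinct and simple, each tangent cone is a fixed simplicial cone translated by a vector that is linear in $\BF{b}$ with bounded denominators, and you never need the full face lattice -- only the vertex cones that enter Brion's identity. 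The two places where real work remains are exactly the ones you flag, but unequally. The constant-term extraction at $\BF{z}=\BF{1}$ via fundamental parallelepipeds and residues is routine and gives degree at most $n-d$ and period dividing $\operatorname{lcm}|\det A_s|$; the positivity of the leading coefficient via the volume of $Q_{\BF{b}}$ (full-dimensional since $\gamma^{\circ}\subseteq\operatorname{int}(\pos(A))$ forces a strictly positive solution) correctly pins the degree at $n-d$. The genuinely delicate step is the passage from $\gamma^{\circ}$ to the closed chamber $\gamma$: on $\partial\gamma$ vertices collide and the actual tangent cones of $Q_{\BF{b}}$ are no longer the translated cones $\BF{y}^s(\BF{b})+C_s$, and the integer-point transform of a translated cone is not continuous in the translation, so ``robustness'' and ``continuity'' do not suffice as stated; one needs either the Lawrence--Varchenko signed decomposition argument carried out in detail, or a semicontinuity/wall-crossing argument in the style of Dahmen--Micchelli or Boysal--Vergne. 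That closure statement is true and is precisely what licenses the paper's convention of using closed chambers, but as written it is the one step of your proposal that is asserted rather than proved.
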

 
For each choice of $\BF{b} \in \mathbb{Z}^d$, the evaluation $p_A(\BF{b})$ is the number of integer points in the polytope 
\begin{equation*}
    \mathcal{P} := \{\BF{x} \in \mathbb{R}^d : A\BF{x} = \BF{b}, \BF{x} \geq \BF{0}\}.
\end{equation*}

The funcion $L_{\mathcal{P}}(t)$ that enumerates integer points in the $t^{th}$ dilate of $\mathcal{P}$
\begin{equation*}
    t\mathcal{P} := \{\BF{x} \in \mathbb{R}^d : A\BF{x} = t\BF{b}, \BF{x} \geq \BF{0}\}
\end{equation*}
is a quasi-polynomial, which is called the \emph{Ehrhart quasi-polynomial}.
This is equal to $p_A(t\BF{b})$ (viewed as a function of $t$). Ehrhart quasi-polynomials have many beautiful properties (the book \emph{Computing the continuous discretely} by Beck and Robbins \cite{BeRo15} gives a wonderful overview of this topic). 

Barvinok's algorithm \cite{Barv94} allows one to explicitly compute the piecewise quasi-polynomial $p_A$ in polynomial time for fixed dimension $n$. Multiple implementations of Barvinok's algorithm exist: \emph{Latte} \cite{DeHe04} developed by De Loera, Hemmecke, Tauzer, and Yoshida can be used to (among many other things) compute the quasi-polynomial $p_A(t\BF{b})$ for each $\BF{b} \in \pos(A)$ and is integrated in \emph{Sagemath}; \emph{Barvinok} \cite{KoVeWo08}, developed by Koeppe, Verdoolaege, and Woods can be used to compute the full piecewise quasi-polynomial $p_A$\footnote{The algorithm implemented by Koeppe, Verdoolaege, and Woods is called the \emph{Barvinok-Woods} algorithm. It is based on the original formulation of Barvinok.}. While Barvinok's algorithm is polynomial time for fixed dimension $n$, the problem of computing the vector partition function quickly becomes intractable as the dimension grows. 

\section{External columns and chambers} \label{sec:external-cols-chambers}

In this section, we introduce the main objects of study of this document: external columns, rays, facets, and chambers. Additionally, we describe some properties of these objects that will be necessary for our main result, Theorem \ref{theo:external-col-variables}. We begin by defining some notation which will appear throughout the rest of this article.

\subsection{Notation}
Recall that $A$ denotes a $ d \times n$ matrix with integer entries, of rank $d$, and satisfying 
\begin{equation*}
    \ker(A) \cap \mathbb{R}_{\geq 0}^n = \{\BF{0}\}.
\end{equation*}
We denote the columns of $A$ by $\BF{a}_1, \dots,\BF{a}_n $. Often we informally identify a matrix with its set of columns, so that we may write $\semigroup{\BF{a}_1, \dots, \BF{a}_n}$ in lieu of $\semigroup{A}$. Additionally, we often refer to a \emph{chamber of the matrix} $A$ instead of saying a \emph{chamber in the chamber complex} of $A$ for short.

Given a $d \times n$ matrix $M$, by $M_{\hat{i}, \hat{j}}$ we denote the $(d - 1) \times (n -1)$ submatrix obtained by removing row $i$ and column $j$. We also denote by $M_{\hat{i}, \cdot}$ the $ (d-1) \times n$ matrix obtained by removing row $i$, and by $M_{\cdot, \hat{j}}$ the $d \times (n-1)$ matrix obtained by removing column $j$. Similarly for a vector $\BF{v}$, by $\BF{v}_{\hat{i}}$, we denote $\BF{v}$ with the $i$th coordinate removed. 

\subsection{External columns} \label{subsec:external-cols}

\begin{defn}
    Let $\BF{a}_j$ be the $j${th} column of $A$ for some $j \in \{1, \dots, n\}$. We define $\BF{a}_j$ to be an \emph{external column} of $A$ if $\BF{a}_j \notin \pos(A_{\cdot, \hat{j}})$. 
\end{defn}

If $\BF{a}_j$ is an external column of $A$, then the cone $\pos(A_{\cdot, \hat{j}})$ is a proper subset of $\pos(A)$. 

\begin{prop} \label{prop:external-col-equivalence}
Let $\BF{a}_j$ be a column of $A$ for some $j \in \{1, \dots, n\}$. Then $\BF{a}_j$ is an external column of $A$ if and only if $\BF{a}_j$ is a ray generator of $\pos(A)$ and no other column of $A$ is in the span of $\BF{a}_j$.
\end{prop}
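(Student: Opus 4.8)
The plan is to prove both directions of the equivalence directly from the definitions, using basic facts about convex cones and their ray generators.

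First I would prove the forward direction: suppose $\BF{a}_j$ is an external column, so $\BF{a}_j \notin \pos(A_{\cdot, \hat{j}})$. To see that $\BF{a}_j$ is a ray generator of $\pos(A)$, I would argue by contradiction: if $\BF{a}_j$ were not a ray generator, then it would lie in the cone generated by the remaining ray generators of $\pos(A)$, each of which is a non-negative combination of the columns $\BF{a}_i$ with $i \neq j$ (since $\BF{a}_j$ itself, not being a ray generator, cannot be needed to generate any edge). This would place $\BF{a}_j \in \pos(A_{\cdot, \hat{j}})$, a contradiction. For the second condition, suppose some other column $\BF{a}_i$ ($i \neq j$) lies in the span of $\BF{a}_j$, i.e. $\BF{a}_i = \lambda \BF{a}_j$ for some scalar $\lambda$. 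The key subtlety is the sign of $\lambda$: if $\lambda \geq 0$ then $\BF{a}_j = \tfrac{1}{\lambda}\BF{a}_i \in \pos(A_{\cdot,\hat j})$ (handling $\lambda = 0$ separately, which forces $\BF{a}_i = \BF{0} \in \pos(A_{\cdot,\hat j})$ trivially, though actually we should note $\BF{a}_i=\BF{0}$ cannot occur since then $\BF{e}_i \in \ker(A)\cap\mathbb{R}^n_{\geq 0}$), contradicting externality; if $\lambda < 0$ then $\BF{a}_j$ and $\BF{a}_i$ are antiparallel, so $\pos(\BF{a}_i, \BF{a}_j)$ contains a line, and since both are columns of $A$ this contradicts $\ker(A) \cap \mathbb{R}^n_{\geq 0} = \{\BF{0}\}$ (the combination $\BF{a}_i + |\lambda|\cdots$ — more precisely $-\lambda \BF{e}_i$... let me restate: $\BF{a}_i - \lambda\BF{a}_j = \BF{0}$ with $-\lambda > 0$ gives a nonzero nonnegative kernel element). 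Either way we reach a contradiction, so no other column is in the span of $\BF{a}_j$.

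Next I would prove the converse: suppose $\BF{a}_j$ is a ray generator of $\pos(A)$ and no other column lies in its span; I want $\BF{a}_j \notin \pos(A_{\cdot,\hat j})$. Again argue by contradiction: if $\BF{a}_j = \sum_{i \neq j} \mu_i \BF{a}_i$ with all $\mu_i \geq 0$, then since $\BF{a}_j$ generates an edge (ray) of $\pos(A)$ and each $\BF{a}_i$ lies in $\pos(A) \subseteq H^+_{\BF{m}}$ for the supporting hyperplane $H_{\BF{m}}$ defining that edge, evaluating $\BF{m}$ against the equation forces $\mu_i \BF{a}_i \in H_{\BF{m}} = \pos(\BF{a}_j) \cdot \mathbb{R}$... more carefully: each term with $\mu_i > 0$ must satisfy $\BF{m}\cdot\BF{a}_i = 0$, so $\BF{a}_i$ lies on the edge $r = \pos(\BF{a}_j)$, hence $\BF{a}_i \in \operatorname{span}(\BF{a}_j)$, contradicting the hypothesis — unless all $\mu_i = 0$, which would give $\BF{a}_j = \BF{0}$, again impossible by the kernel condition. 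This completes the argument.

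The main obstacle I expect is getting the sign cases right in the "span" condition and being careful about degenerate columns equal to $\BF{0}$; the kernel hypothesis $\ker(A) \cap \mathbb{R}^n_{\geq 0} = \{\BF{0}\}$ is exactly what rules out the pathological cases (a zero column, or two antiparallel columns), so I would make sure to invoke it explicitly at those points. The geometric heart of the argument — that a point on an extreme ray of a cone, if written as a nonnegative combination of the generators, forces all contributing generators to lie on that same ray — is standard and follows cleanly from Proposition \ref{prop:facet-normal-dual-ray} together with the fact that a ray is an intersection of facets; this is the part I would state carefully but not belabor.
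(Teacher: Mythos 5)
Your proof is correct and follows essentially the same route as the paper's: the same sign analysis ($\lambda>0$ versus $\lambda<0$, with the kernel condition ruling out antiparallel or zero columns) for the span condition, and the same identification of external columns with edge-generating columns. The only difference is one of detail — you make explicit, via the supporting hyperplane of the edge, the extreme-ray argument that the paper's converse direction leaves implicit — which is a reasonable expansion rather than a different approach.
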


\begin{proof}
Let $\BF{a}_j$ be an external column of $A$. Since $\pos(A_{\cdot, \hat{j}}) \neq \pos(A)$, $\BF{a}_j$ is part of a minimal generating set of $\pos(A)$, and thus is a ray generator of $\pos(A)$. Also, no other column $\BF{c}$ of $A$ is in its span. Otherwise, either $\BF{c}$ is a positive multiple of $\BF{a}_j$ in which case $\BF{a}_j \in \pos(A_{\cdot, \hat{j}})$ or $\BF{c}$ is a negative multiple of $\BF{a}_j$ in which case $\operatorname{ker}(A) \cap \mathbb{R}^d_{\geq 0} \neq \{\BF{0}\}$. 

Conversely, let $\BF{a}_j$ be a ray generator of $\pos(A)$ with no other column of $A$ in its span. Then $\BF{a}_j$ is in a minimal generating set of $\pos(A)$, and since no other column of $A$ is in its span, $\BF{a}_j \notin \pos(A_{\cdot, \hat{j}})$. Therefore, $\BF{a}_j$ is an external column of $A$.
\end{proof}

A straightforward consequence of this proposition is that external columns of $A$ lie on facets of $\pos(A)$. The following proposition will be useful in the following section, and is also a straightforward consequence of Proposition \ref{prop:external-col-equivalence}.

\begin{prop} \label{prop:external-simplicial}
    Let $\BF{a}_j$ be an external column of $A$ for some $j \in \{1, \dots, n\}$, and let $\pos(A_s)$ for some $s \subseteq \{1, \dots, n\}$. If $\BF{a}_j \in \pos(A_s)$, then $\BF{a}_j$ is a ray generator of $\pos(A_s)$. In particular, $j \in s$.
\end{prop}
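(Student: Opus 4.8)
The plan is to deduce Proposition~\ref{prop:external-simplicial} directly from Proposition~\ref{prop:external-col-equivalence} together with the elementary fact (stated earlier, right after Proposition~\ref{prop:external-col-equivalence}) that an external column of $A$ lies on a facet of $\pos(A)$. The strategy is: show that $\BF{a}_j$, being an external column, cannot lie in the relative interior of any face of $\pos(A_s)$ of dimension $\geq 2$, so the smallest face of $\pos(A_s)$ containing it is a ray, and that ray must be $\pos(\BF{a}_j)$ itself.

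First I would invoke Proposition~\ref{prop:external-col-equivalence}: since $\BF{a}_j$ is an external column, it is a ray generator of $\pos(A)$, and no other column of $A$ lies in $\operatorname{span}(\BF{a}_j)$. Because $\BF{a}_j$ generates a ray of $\pos(A)$, there is a supporting hyperplane $H_{\BF{m}}$ of $\pos(A)$ with $H_{\BF{m}} \cap \pos(A) = \pos(\BF{a}_j)$; in particular $\BF{m} \cdot \BF{a}_j = 0$ and $\BF{m} \cdot \BF{a}_i > 0$ for every other column $\BF{a}_i$ (here I use that $\operatorname{pos}(\BF{a}_j)$ is a one-dimensional face, so any column $\BF a_i$ not on that ray satisfies strict inequality; if some $\BF a_i$ were on the ray it would lie in $\operatorname{span}(\BF a_j)$, contradicting externality). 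Now suppose $\BF{a}_j \in \pos(A_s)$, say $\BF{a}_j = \sum_{i \in s} \lambda_i \BF{a}_i$ with $\lambda_i \geq 0$. Applying $\BF{m}$ gives $0 = \sum_{i \in s} \lambda_i (\BF{m} \cdot \BF{a}_i)$, and since $\BF{m}\cdot\BF a_i \ge 0$ for all $i$ with equality only when $\BF a_i \in \operatorname{span}(\BF a_j)$, every $i \in s$ with $\lambda_i > 0$ has $\BF{a}_i \in \operatorname{span}(\BF{a}_j)$. By the externality of $\BF{a}_j$ again, the only column in $\operatorname{span}(\BF{a}_j)$ is $\BF{a}_j$ itself, so $j \in s$ and $\lambda_i = 0$ for all $i \neq j$, whence $\BF{a}_j = \lambda_j \BF{a}_j$; in fact this shows $\BF{a}_j$ is a nonnegative multiple of the generator $\BF{a}_j$ of $\pos(A_s)$, so it is a ray generator of $\pos(A_s)$, and $j \in s$, which is exactly the claim.

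I do not expect a serious obstacle here; the proposition is essentially a corollary. The one point requiring a little care is justifying the existence of the functional $\BF{m}$ separating the ray $\pos(\BF a_j)$ from the remaining columns — i.e. that a ray generator of a pointed cone lies on a supporting hyperplane meeting the cone only in that ray — but this is standard cone theory (it follows from the facet/face description recalled in Section~\ref{subsec:cones}, since a ray is an intersection of facets and the external column, lying on a facet of $\pos(A)$, picks out the needed facet normal via Proposition~\ref{prop:facet-normal-dual-ray}). Alternatively, one can avoid functionals entirely: if $\BF{a}_j = \sum_{i\in s}\lambda_i \BF a_i$ is a positive combination using some column other than $\BF a_j$, then rearranging expresses either $\BF a_j$ or one of the other columns as a nonnegative combination of columns from $A_{\cdot,\hat j}$, contradicting either $\BF a_j \notin \pos(A_{\cdot,\hat j})$ or (via $\ker(A)\cap\mathbb R^n_{\ge 0} = \{\BF 0\}$) the standing hypothesis on $A$. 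I would present whichever version is shortest, likely the functional argument, and conclude that $j \in s$ and $\BF a_j$ is a ray generator of $\pos(A_s)$.
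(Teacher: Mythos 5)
Your argument is correct. The paper gives no proof of this proposition, stating only that it is a straightforward consequence of Proposition~\ref{prop:external-col-equivalence}, and both of your routes validly fill that in: the separating-functional argument works (the ray $\pos(\BF{a}_j)$ is an exposed face of $\pos(A)$, obtainable as the intersection of the facets containing it, and externality forces $\BF{m}\cdot\BF{a}_i>0$ for every other column), while your alternative --- rearranging a putative representation $\BF{a}_j=\sum_{i\in s}\lambda_i\BF{a}_i$ to contradict either $\BF{a}_j\notin\pos(A_{\cdot,\hat{j}})$ or $\ker(A)\cap\mathbb{R}^n_{\geq 0}=\{\BF{0}\}$ --- is the shorter one and mirrors exactly the two ingredients used in the paper's proof of Proposition~\ref{prop:external-col-equivalence}. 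The only point worth making explicit in either version is the final step that $\pos(\BF{a}_j)$ is genuinely an edge of $\pos(A_s)$ (e.g.\ because $H_{\BF{m}}\cap\pos(A_s)\subseteq H_{\BF{m}}\cap\pos(A)=\pos(\BF{a}_j)$ is a one-dimensional face), which you gesture at but phrase slightly circularly.
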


One can also define the external columns in terms of the vector partition function $p_A$. They are exactly the columns of $A$ for which $p_A(t\mathbf{a}_j) \leq 1$ for all non-negative integers $t$ (i.e for which the Ehrhart quasi-polynomial $p_A(t\mathbf{a}_j)$ has degree $0$). 

\subsection{External chambers} \label{subsec:external-chambers}
We now introduce external chambers, the main objects of study of our work.

\begin{defn}
Let $\gamma$ be a chamber of $A$. We define $\gamma$ to be an \emph{external chamber} of $A$ if all but one ray of $\gamma$ is generated by an external column of $A$. Further, we define the rays that are generated by external columns of $A$ to be \emph{external rays} of $\gamma$ and the other ray to be the \emph{internal ray} of $\gamma$. Moreover, we define any generator of an external ray to be an \emph{external ray generator} of $\gamma$ and any ray generator of an internal ray to be an \emph{internal ray generator} of $\gamma$.
\end{defn}

\begin{rem} \label{rem:degenerate}
    The reader may wonder what happens in the case that $\gamma$ is generated solely by external columns. This case is degenerate: $A$ has a single chamber and $p_A(\BF{b}) \leq 1$ for all $\BF{b} \in \pos(A) \cap \mathbb{Z}^d$. 
\end{rem}

\begin{eg} \label{eg:easy-eg1}
    Consider the following matrix
    \begin{equation*}
        A^{2,2} = \begin{bmatrix}
            1 & 0 & 1 & 1 \\
            0 & 1 & 1 & 2
        \end{bmatrix}
    \end{equation*}
    given in \cite{MiRoSu21} (where it is named $A_{2,2}$) that is a member of a family of matrices $A^{m,n}$ related to Kronecker coefficients. 
    Call its columns $\BF{a}_1, \BF{a}_2, \BF{a}_3, \BF{a}_4$.
    The chamber complex of $A^{2,2}$ is defined by the three chambers
    \[
    \begin{array}{ccc}
    \gamma_1 = \pos\left(\BF{a}_1, \BF{a}_3 \right), &
    \gamma_2 = \pos\left(\BF{a}_3, \BF{a}_4 \right), &
    \gamma_3  = \pos\left(\BF{a}_2, \BF{a}_3 \right).
    \end{array}
    \]
    The external columns of $T$ are $\mathbf{a}_1$ and $\mathbf{a}_2$, and so we see that $\gamma_1$ and $\gamma_3$ are external chambers while $\gamma_2$ is not. The columns and chambers of $A^{2,2}$ are depicted in Figure \ref{fig:eg1}. 

    \begin{figure}
        \centering
        \includegraphics[scale=0.4]{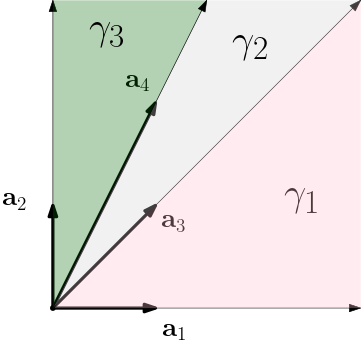}
        \caption{The columns and chambers of $A^{2,2}.$ Columns $\BF{a}_1, \BF{a}_2$ are external while $\BF{a}_3, \BF{a}_4$ are not. Also, chambers $\gamma_1,\gamma_3$ are external while $\gamma_2$ is not.}
        \label{fig:eg1}
    \end{figure}
\end{eg}

It is easy to see from the previous example that when $d=2$ there are at most two external chambers. There may be fewer if there are multiple columns of $A$ generating a ray of $\pos(A)$. 

\begin{eg} \label{eg:kostant-eg1}
    Consider the following matrix
    \begin{equation*}
    K_3 = 
    \begin{bmatrix}
    1 & 1 & 1 & 0 & 0 & 0 \\
    -1 & 0 & 0 & 1 & 1 & 0 \\
    0 & -1 & 0 & -1 & 0 & 1
    \end{bmatrix}.
    \end{equation*}
     This matrix relates to Kostant's partition function for root systems of type $A$ (see \cite[Section 5]{DeSt03} for more details). 
    Call its columns $\BF{a}_1, \BF{a}_2, \dots, \BF{a}_6$, and let 
    \begin{equation*}
        \BF{v} := \begin{bmatrix}
        1 \\
        1 \\
        -1
    \end{bmatrix}.
    \end{equation*}
    The chamber complex of $K_3$ is defined by the seven chambers 
    \[
    \begin{array}{cccc}
    \gamma_1 = \pos\left(\BF{a}_4, \BF{a}_5, \BF{v} \right), &
    \gamma_2 = \pos\left(\BF{a}_3, \BF{a}_5, \BF{v} \right), &
    \gamma_3  = \pos\left(\BF{a}_2, \BF{a}_4, \BF{v} \right), & 
    \gamma_4 = \pos\left(\BF{a}_3, \BF{a}_5, \BF{a}_6 \right), \\
    \gamma_5 = \pos\left(\BF{a}_1, \BF{a}_3, \BF{a}_6 \right), &
    \gamma_6  = \pos\left(\BF{a}_1, \BF{a}_2, \BF{a}_3 \right), &
    \gamma_7  = \pos\left(\BF{a}_2, \BF{a}_3, \BF{v} \right). & \
    \end{array}
    \]
    \begin{figure}
        \centering
        \includegraphics[scale=0.4]{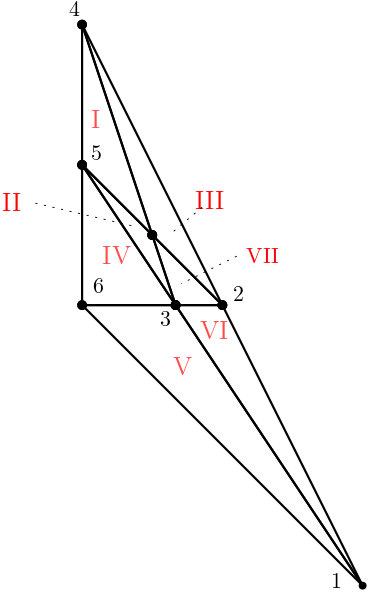}
        \caption{A projection of the chamber complex of $K_3$ via $(x,y,z)~\to~\left(\frac{x}{3x + 2y + z}, \frac{y}{3x + 2y + z}\right)$. Here vertex $i$ is obtained by projecting the $i$th column of $K_3$. The sole unnumbered vertex is the projection of the ray generated by $(1,1,-1)$ obtained in the refinement process. Columns $1,4,6$ are external, and Chamber $V$ is external.}
        \label{fig:kostant-chambers1}
    \end{figure}
A $2$-dimensional projection of the chamber complex of $K_3$ is depicted in Figure \ref{fig:kostant-chambers1}. The projection of the ray generated by each column is the vertex labeled with the appropriate column number. Additionally, the projection of each chamber is given by a labeled (with Roman numerals) 2-dimensional region bounded by edges. One can check (or derive from the figure) that the external columns of $K_3$ are $\mathbf{a}_1, \mathbf{a}_4, \mathbf{a}_6$. Therefore, $\gamma_5$ is an external chamber since its only interal ray generator is $\BF{a}_3$. One may check that each of the other chambers have at least two internal ray generators.
\end{eg}

The next proposition follows directly from the definition of the chamber complex of $A$.

\begin{prop} \label{prop:licolumns-chamber}
If a chamber $\gamma$ of $A$ contains $d$ linearly independent columns of $A$, say $\BF{a}_1, \dots, \BF{a}_d$, then $\gamma = \pos\left(\BF{a}_1, \dots, \BF{a}_d\right)$.
\end{prop}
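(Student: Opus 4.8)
The plan is to recognize $\pos(\BF{a}_1,\dots,\BF{a}_d)$ as one of the simplicial cones of $A$ and then trap $\gamma$ between that cone and itself using the listed structural facts about the chamber complex.

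First I would check that $\sigma:=\pos(\BF{a}_1,\dots,\BF{a}_d)$ really is a simplicial cone of $A$. Since $\BF{a}_1,\dots,\BF{a}_d$ are columns of $A$ that are linearly independent and $\rank(A)=d$, the index set $s=\{1,\dots,d\}$ satisfies $|s|=\rank(A_s)=\rank(A)=d$, which is exactly the defining condition; moreover $\sigma$ is $d$-dimensional because its generators span a $d$-dimensional subspace. This is the only place the linear-independence hypothesis is actually used.

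Next I would record the easy inclusion $\sigma\subseteq\gamma$. A chamber is a (convex polyhedral) cone, hence closed under non-negative linear combinations of its elements, and by hypothesis $\BF{a}_1,\dots,\BF{a}_d\in\gamma$; therefore every point $\lambda_1\BF{a}_1+\dots+\lambda_d\BF{a}_d$ with $\lambda_i\ge 0$ lies in $\gamma$, i.e.\ $\sigma\subseteq\gamma$. In particular $\gamma\cap\sigma=\sigma$ is $d$-dimensional. Now I would invoke the fourth listed property of the chamber complex: a chamber that meets a simplicial cone of $A$ in full dimension $d$ is contained in that simplicial cone, so $\gamma\subseteq\sigma$. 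Combining the two inclusions gives $\gamma=\sigma=\pos(\BF{a}_1,\dots,\BF{a}_d)$.

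There is no genuine obstacle here; the argument is a direct application of the bookkeeping facts stated just before the proposition. The only points requiring a modicum of care are verifying that $s=\{1,\dots,d\}$ meets the definition of a simplicial cone of $A$ (linear independence plus $\rank(A)=d$) and reading "contains" as set membership of the columns, so that convexity of the chamber supplies the reverse inclusion automatically.
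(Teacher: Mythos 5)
Your argument is correct and is exactly the intended one: the paper states this proposition without proof, remarking only that it "follows directly from the definition of the chamber complex," and your two inclusions (convexity of $\gamma$ giving $\sigma\subseteq\gamma$, then the full-dimensional-intersection fact giving $\gamma\subseteq\sigma$) supply precisely the missing details using the structural facts listed just before the statement.
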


\begin{prop} \label{prop:chamber-simplicial}
External chambers of $A$ are simplicial.
\end{prop}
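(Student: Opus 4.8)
The plan is to show that an external chamber $\gamma$ has exactly $d$ ray generators, which together with the fact that $\gamma$ is $d$-dimensional forces its ray generators to be linearly independent, hence $\gamma$ is simplicial. First I would recall that by definition an external chamber has all but one of its rays generated by external columns of $A$; let these external ray generators be (representatives of) $\BF{a}_{j_1}, \dots, \BF{a}_{j_k}$ and let $r$ denote the internal ray. The key structural input is Proposition \ref{prop:external-col-equivalence}: each external column $\BF{a}_{j_i}$ is a ray generator of $\pos(A)$ with no other column of $A$ in its span. In particular the external rays of $\gamma$ are pairwise distinct and each is a ray of $\pos(A)$, hence lies on the boundary of $\pos(A)$.

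Next I would argue that the $k$ external rays of $\gamma$ are linearly independent. This is where the main work lies, and I expect it to be the chief obstacle. The idea is that each external column $\BF{a}_{j_i}$ lies on a facet $F_i$ of $\pos(A)$ (this is the remark immediately after Proposition \ref{prop:external-col-equivalence}), and since $\BF{a}_{j_i}$ is the unique column on its ray, it is a ray generator of $\pos(A)$; distinct rays of a pointed cone that all lie on the boundary are linearly independent exactly when they span a face, so I must show the external rays of $\gamma$ together span a face of $\pos(A)$ of the appropriate dimension — or, alternatively, argue directly. A cleaner route: use fact (5) in the list of chamber-complex facts together with Proposition \ref{prop:external-simplicial}. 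Take any simplicial cone $\pos(A_s)$ of $A$ with $\gamma \subseteq \pos(A_s)$ (such exist by fact (1)). By Proposition \ref{prop:external-simplicial}, each external column $\BF{a}_{j_i}$ of $\gamma$, being in $\pos(A_s)$, must be a ray generator of $\pos(A_s)$, so $j_i \in s$; thus $\{\BF{a}_{j_1}, \dots, \BF{a}_{j_k}\}$ is a subset of the ray generators of the simplicial cone $\pos(A_s)$, which are linearly independent. Hence the external ray generators of $\gamma$ are linearly independent.

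Finally I would combine this with dimension counting. Since $\gamma$ is a chamber it has dimension $d$, so it has at least $d$ rays; by definition it has exactly $k+1$ rays where $k$ is the number of external rays and the extra one is the internal ray. The external rays already contribute $k$ linearly independent vectors living inside the $d$-dimensional space; adding the internal ray generator we get $k+1 \le d$ in the nondegenerate case, but since $\dim \gamma = d$ we must have $k + 1 \ge d$, so $k = d-1$ and $\gamma$ has exactly $d$ rays. A set of $d$ ray generators spanning a $d$-dimensional cone is necessarily linearly independent (if they were dependent, they would span a space of dimension $< d$), so $\gamma$ is simplicial. One subtlety to address is the degenerate situation flagged in Remark \ref{rem:degenerate} where $\gamma$ could be generated solely by external columns; but the definition of external chamber explicitly requires exactly one internal ray, so this case is excluded and the argument goes through cleanly.
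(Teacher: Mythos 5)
Your approach is essentially the paper's: both arguments hinge on Proposition \ref{prop:external-simplicial} applied to a simplicial cone $\pos(A_s)$ containing $\gamma$, which forces the external columns of $\gamma$ to sit among the $d$ linearly independent ray generators of $\pos(A_s)$. Your treatment of that part is correct, and it gives $k \le d$ for the number $k$ of external rays.

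However, there is a genuine gap in the final dimension count. You assert that ``adding the internal ray generator we get $k+1 \le d$ in the nondegenerate case,'' but nothing you have proved rules out $k = d$: a priori $\gamma$ could be a non-simplicial $d$-dimensional cone with $d$ external rays \emph{and} one internal ray (compare a square cone in $\mathbb{R}^3$, which has four rays, any three of which are linearly independent). Linear independence of the $k$ external generators only yields $k \le d$, not $k+1 \le d$, since the internal ray generator need not be independent of them. Your closing remark that the definition of external chamber excludes the degenerate case does not close this hole either --- the definition forbids $\gamma$ from having \emph{only} external rays, but it does not by itself forbid $d$ external rays plus an internal one. The paper closes exactly this case with a sandwich argument: if $k=d$, then Proposition \ref{prop:external-simplicial} forces $s = \{j_1,\dots,j_d\}$, so
\begin{equation*}
\pos(\BF{a}_{j_1}, \dots, \BF{a}_{j_d}) \subseteq \gamma \subseteq \pos(A_s) = \pos(\BF{a}_{j_1}, \dots, \BF{a}_{j_d}),
\end{equation*}
whence $\gamma$ equals the simplicial cone generated by its external columns and has no internal ray, a contradiction. (Equivalently, you could invoke Proposition \ref{prop:licolumns-chamber}.) With that step inserted you get $k \le d-1$, and the rest of your count goes through.
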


\begin{proof} 
Let $\gamma$ be an external chamber of $A$. If $\gamma$ is not simplicial, then it must contain at least $d$ external columns, say $\BF{a}_1, \dots, \BF{a}_{k}$ for some $k \geq d$. By definition, $\gamma$ is contained in some simplicial cone $\pos(A_s)$ for some $s \subseteq \{1, 2, \dots, n\}$ with $|s| = \rank(A_s) = d$. By Proposition~\ref{prop:external-simplicial}, $\{1, \dots, k\}~\subseteq~s$, so $k \leq d$. Thus $k=d$, and we find that
\begin{equation}
    \pos(\BF{a}_1, \dots, \BF{a}_d) \subseteq \gamma \subseteq \pos(A_s) = \pos(\BF{a}_1, \dots, \BF{a}_d).
\end{equation}
Therefore $\gamma = \pos(\BF{a}_1, \dots, \BF{a}_d)$ which is a contradiction since $\gamma$ must have an internal ray generator. 
\end{proof}

The following lemma will prove useful in terms of computing external chambers. Additionally, it plays a key role in the results of Section \ref{sec:dim-reduction}. Figure \ref{fig:unique-facet} provides an illustration of some of the elements of the proof, and may be a useful visual guide. 

\begin{lem} \label{lem:unique-facet}
Let $\BF{a}_j$ be an external column of $A$ for some $j \in \{1, \dots, n\}$. Then any chamber of $A$ containing $\BF{a}_j$ has a single facet $f$ not containing $\BF{a}_j$. Moreover, if $H$ is the supporting hyperplane of $f$, then $H$ separates $\BF{a}_j$ and $\pos(A_{\cdot, \hat{j}}).$
\end{lem}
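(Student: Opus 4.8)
The plan is to control the geometry of $\gamma$ near the ray $\pos(\BF{a}_j)$ by combining a structural statement about the ray generators of $\gamma$ with the description of $\gamma$ as an intersection of simplicial cones. Two observations come for free. By fact (5) in the itemized list above, $\BF{a}_j$ is a ray generator of $\gamma$. And since $\gamma$ is a pointed $d$-dimensional cone, applying ``any proper face is the intersection of the facets containing it'' to the face $\{\BF{0}\}$ shows that the facets of $\gamma$ have intersection $\{\BF{0}\}$; hence they cannot all contain $\pos(\BF{a}_j)$, so \emph{at least one} facet of $\gamma$ omits $\BF{a}_j$. What remains is uniqueness together with the separation statement.

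\textbf{Structural step.} I would first show that every ray generator $\BF{w}$ of $\gamma$ with $\pos(\BF{w})\neq\pos(\BF{a}_j)$ lies in $\pos(A_{\cdot,\hat{j}})$. Fix $\BF{b}\in\gamma^{\circ}$; by fact (3), $\gamma=\bigcap_{s\in\mathcal{S}}\pos(A_s)$ where $\mathcal{S}=\{\,s : |s|=\rank(A_s)=d,\ \BF{b}\in\pos(A_s)\,\}$, and by Proposition \ref{prop:external-simplicial} every $s\in\mathcal{S}$ contains $j$. For a simplicial cone $\pos(A_s)$ the inner facet normals form the basis dual to $\{\BF{a}_i : i\in s\}$; let $\BF{m}_s$ be the one dual to $\BF{a}_j$, so $\BF{m}_s\cdot\BF{a}_j=1$ and $\BF{m}_s\cdot\BF{a}_i=0$ for $i\in s\setminus\{j\}$. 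Dualizing $\gamma=\bigcap_{s\in\mathcal{S}}\pos(A_s)$ and using Proposition \ref{prop:facet-normal-dual-ray} gives $\gamma^{\vee}=\sum_{s\in\mathcal{S}}\pos(A_s)^{\vee}=\pos\bigl(\{\BF{m}_s : s\in\mathcal{S}\}\cup T\bigr)$, where $T$ is the finite set of all the remaining dual basis vectors and $T\subseteq\BF{a}_j^{\perp}$. If $\BF{w}\notin\pos(A_{\cdot,\hat{j}})$, then expanding $\BF{w}=\sum_{i\in s}c_i\BF{a}_i$ (all $c_i\geq 0$) we must have $c_j>0$ for every $s\in\mathcal{S}$ --- otherwise $\BF{w}\in\pos(A_{s\setminus\{j\}})\subseteq\pos(A_{\cdot,\hat{j}})$ --- so $\BF{m}_s\cdot\BF{w}=c_j>0$ for all $s$. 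Since $\BF{w}$ is an extreme ray of $\gamma$, the standard duality between the face lattices of $\gamma$ and $\gamma^{\vee}$ makes $\gamma^{\vee}\cap\BF{w}^{\perp}$ a facet of $\gamma^{\vee}$, and its extreme rays span the hyperplane $\BF{w}^{\perp}$; but by the previous sentence those extreme rays can only come from $T$, so $\BF{w}^{\perp}\subseteq\BF{a}_j^{\perp}$, forcing $\BF{w}^{\perp}=\BF{a}_j^{\perp}$ and hence (as $\gamma$ is pointed) $\pos(\BF{w})=\pos(\BF{a}_j)$, a contradiction.

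\textbf{Separation step.} Let $f$ be \emph{any} facet of $\gamma$ missing $\BF{a}_j$, with inner normal $\BF{m}$: thus $\gamma\subseteq H_{\BF{m}}^{+}$ and $\BF{m}\cdot\BF{a}_j>0$. The ray generators of $f$ are ray generators of $\gamma$ other than $\BF{a}_j$, so by the structural step $f\subseteq\pos(A_{\cdot,\hat{j}})$. Suppose toward a contradiction that $\BF{m}\cdot\BF{y}>0$ for some $\BF{y}\in\pos(A_{\cdot,\hat{j}})$. Choose $\BF{b}$ in the relative interior of $f$; then $\BF{b}$ lies in no other facet of $\gamma$, so $\gamma$ coincides with $H_{\BF{m}}^{+}$ in a neighbourhood of $\BF{b}$, and for small $\varepsilon>0$ the point $\BF{p}:=\BF{b}+\varepsilon\BF{y}$ has $\BF{m}\cdot\BF{p}>0$, lies in $\gamma^{\circ}$, and also lies in $\pos(A_{\cdot,\hat{j}})$. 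By Carath\'{e}odory's theorem for conical hulls, $\BF{p}$ is a positive combination of a linearly independent family of columns indexed by some $s'\subseteq\{1,\dots,n\}\setminus\{j\}$; in the main case $\rank(A_{\cdot,\hat{j}})=d$ we may enlarge $s'$ to $s''$ with $|s''|=\rank(A_{s''})=d$ and $j\notin s''$, so that $\BF{p}\in\pos(A_{s'})\subseteq\pos(A_{s''})$. Since $\BF{p}\in\gamma^{\circ}$, fact (3) gives $\gamma\subseteq\pos(A_{s''})$, whence $\BF{a}_j\in\pos(A_{s''})$, contradicting $j\notin s''$ via Proposition \ref{prop:external-simplicial}. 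Hence $\pos(A_{\cdot,\hat{j}})\subseteq H_{\BF{m}}^{-}$, i.e. $H_{\BF{m}}$ separates $\BF{a}_j$ from $\pos(A_{\cdot,\hat{j}})$. (The only other possibility is $\rank(A_{\cdot,\hat{j}})=d-1$, in which case $\pos(A_{\cdot,\hat{j}})$ lies in a hyperplane $W$ with $\BF{a}_j\notin W$, $\gamma$ is plainly the pyramid over $\gamma\cap W$ with apex $\BF{a}_j$, and both conclusions of the lemma are immediate.)

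\textbf{Uniqueness.} With $\pos(A_{\cdot,\hat{j}})\subseteq H_{\BF{m}}^{-}$ in hand, every ray generator $\BF{w}\neq\BF{a}_j$ of $\gamma$ satisfies $\BF{m}\cdot\BF{w}\leq 0$ (it lies in $\pos(A_{\cdot,\hat{j}})$) and $\BF{m}\cdot\BF{w}\geq 0$ (it lies in $\gamma$), hence $\BF{m}\cdot\BF{w}=0$ and $\BF{w}\in f$. Therefore $\gamma=\pos(\{\BF{a}_j\}\cup f)$ is the pyramid with apex $\BF{a}_j$ over the facet $f$ (note $\BF{a}_j\notin H_{\BF{m}}=\operatorname{span}(f)$), and every facet of a pyramid other than its base contains the apex; so $f$ is the \emph{unique} facet of $\gamma$ not containing $\BF{a}_j$. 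I expect the separation step to be the crux: the delicate point is producing, through the interior point $\BF{p}$, a simplicial cone of $A$ that avoids column $j$, which is exactly where the interaction of fact (3), Proposition \ref{prop:external-simplicial}, and the rank of $A_{\cdot,\hat{j}}$ is needed.
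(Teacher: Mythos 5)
Your proof is correct, but it follows a genuinely different route from the paper's. The paper covers $\pos(A_{\cdot,\hat{j}})$ by its $(d-1)$-dimensional faces whose supporting hyperplanes separate $\BF{a}_j$ from $\pos(A_{\cdot,\hat{j}})$, joins $\BF{a}_j$ to each such face to form cones $\kappa_i$ that are unions of simplicial cones of $A$, localizes $\gamma$ inside a single $\kappa_{i^*}$, and then shows every facet of $\gamma$ not lying in $f_{i^*}$ contains $\BF{a}_j$ by passing through facets of simplicial cones of $A$. You instead prove a standalone structural statement --- every extreme ray of $\gamma$ other than $\pos(\BF{a}_j)$ lies in $\pos(A_{\cdot,\hat{j}})$ --- via the dual bases of the simplicial cones containing $\gamma$ and the anti-isomorphism of face lattices between $\gamma$ and $\gamma^{\vee}$ (a standard fact from \cite[Section 1.2]{Fu93}, though the paper only explicitly records Proposition \ref{prop:facet-normal-dual-ray}); you then get separation by perturbing a relative interior point of a facet into $\gamma^{\circ}\cap\pos(A_{\cdot,\hat{j}})$ and invoking Carath\'eodory together with Proposition \ref{prop:external-simplicial}, and uniqueness from the resulting pyramid structure. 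Each approach has merit: yours isolates the reusable fact that $\gamma$ is the join of $\pos(\BF{a}_j)$ with a cone contained in $\pos(A_{\cdot,\hat{j}})$ (which is essentially what Case 2 of Proposition \ref{prop:simplicial-b} later extracts from the lemma), and it avoids having to justify that each $\kappa_i$ is a union of simplicial cones of $A$; the paper's argument stays closer to the combinatorics of the chamber complex and exhibits the exceptional facet concretely as $\gamma\cap f_{i^*}$. Your treatment of the degenerate case $\rank(A_{\cdot,\hat{j}})=d-1$ is terse but defensible, and the paper must make the same case distinction.
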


\begin{proof} 
Since $\BF{a}_j$ is an external column of $A$, by definition $\BF{a}_j \not \in \pos(A_{\cdot, \hat{j}})$. Let $f_1, \dots, f_k$ denote the faces of $\pos(A_{\cdot, \hat{j}})$ of dimension $d-1$. 
We note that these faces are not necessarily facets of $\pos(A_{\cdot, \hat{j}})$: if $\pos(A_{\cdot, \hat{j}})$ is $(d-1)$-dimensional, then $k = 1$ and $f_1 = \pos(A_{\cdot, \hat{j}})$. However, the proof proceeds in the same way regardless of whether $\pos(A_{\cdot, \hat{j}})$ is of dimension $d$ or $d-1$.

For $i=1, \dots, k$, the faces $f_i$ can be described as $f_i = \pos(A_{\cdot, \hat{j}}) \cap H_i$ for some supporting hyperplanes $H_1, \dots, H_k~\subset~\mathbb{R}^d$. Without loss of generality, let $H_1, H_2, \dots, H_{\ell}$ be the set of hyperplanes that separate $\BF{a}_j$ and the cone $\pos(A_{\cdot, \hat{j}})$ and also do not contain $\BF{a}_j$. For $i=1,\dots,\ell$, let $\kappa_i$ be the cone generated by $\BF{a}_j$ and the columns of $A_{\cdot, \hat{j}}$ that generate~$f_i$. Since $f_i$ is generated by columns of $A_{\cdot, \hat{j}}$ spanning $H_i$, it follows that $\kappa_i$ is a union of simplicial cones of $A$ -- that is, $\kappa_i = \cup_{s \in S} \pos(A_{s})$ for some $S \subset \mathcal{P}(\{1, \dots, n\})$ with each $s \in S$ satisfying $|s| = \rank(A_{s}) = d$. 

Let $\gamma$ be a chamber of $A$ containing $\BF{a}_j$. Then $\gamma$ has a $d$-dimensional intersection with a simplicial cone of $A$ that
is contained in $\kappa_{i^*}$ for some $1 \leq i^* \leq \ell$, and so $\gamma \subseteq \kappa_{i^*}$.
Let $\tau$ be a facet of $\gamma$ not contained in~$f_{i^*}$, so $\tau$ is not contained in~$\pos(A_{\cdot, \hat{j}})$. By the definition of the chamber complex, $\tau$ must be contained in a facet $\tau'$ of a simplicial cone $\sigma$ of $A$. Thus, we can write $\tau' = \sigma \cap H'$ for some hyperplane $H' \subseteq \mathbb{R}^d$. Then $\gamma \cap H'$ defines a proper face of $\gamma$ containing the facet $\tau$, and so $\tau = \gamma \cap H'$. Since $\tau'$ is generated by columns of $A$ and not contained in $\pos(A_{\cdot, \hat{j}})$, $\BF{a}_j$ is a ray generator of $\tau'$. Thus, $\BF{a}_j \in H'$, and since $\BF{a}_j \in \gamma$ by assumption, $\BF{a}_j \in \tau$. Finally, we see that the unique facet of $\gamma$ not containing $\BF{a}_j$ is $\gamma \cap H_{i^*} = \gamma \cap f_{i^*}$ and $H_{i^*}$ is a separating hyperplane of $\BF{a}_j$ and $\pos(A_{\cdot, \hat{j}}).$
\end{proof}

\begin{figure}[ht] 
    \centering
    \includegraphics[scale=0.5]{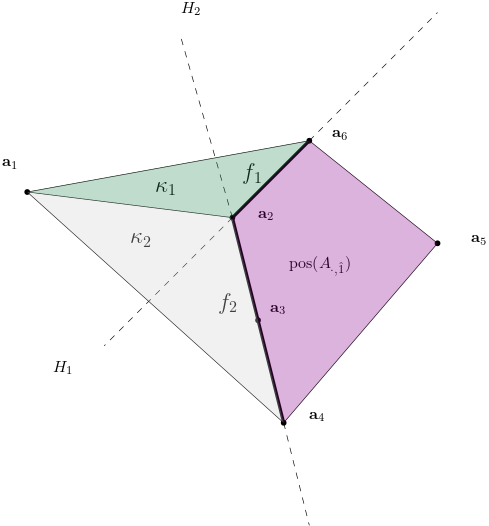}
    \caption[A sketch of some of the elements in the proof of Lemma \ref{lem:unique-facet}.]{A sketch of some of the elements in the proof of Lemma \ref{lem:unique-facet}.
    Here $A$ is a $3 \times 6$ matrix with columns $\BF{a}_1, \dots, \BF{a}_6$. 
    The polytope with vertices labelled $\BF{a}_1, \BF{a}_4, \bf{a}_5, \BF{a}_6$ represents a 2-dimensional cross-section of the 3-dimensional cone $\pos(A)$. Each of the points labelled by a column $\BF{a}_j$ of $A$ represent the ray generated by $\BF{a}_j.$ From the picture we see that $\BF{a}_1, \BF{a}_4, \bf{a}_5, \BF{a}_6$ are the external columns of $A$. The cone $\pos(A_{\cdot, \hat{1}})$ is shaded in dark magenta. The hyperplanes $H_1$ and $H_2$ separate $\BF{a}_1$ and $\pos(A_{\cdot, \hat{1}})$.
    Any chamber $\gamma$ containing $\BF{a}_1$ must be contained in one of the cones $\kappa_1$ (shaded dark green) or $\kappa_2$ (shaded light grey) and the unique facet of $\gamma$ not containing $\BF{a}_1$ is equal to $\gamma \cap f_1$ or $\gamma \cap f_2$.}
    \label{fig:unique-facet}
\end{figure}

We define the cone generated by the external ray generators of an external chamber $\gamma$ to be an \emph{external facet} of $A$. The following result illustrates how to identify external facets and compute the corresponding external chambers. We impose the somewhat artificial condition that $A$ should have at least two chambers to avoid the degenerate case referred to in Remark~\ref{rem:degenerate}.

\begin{prop}[Constructing external chambers] \label{prop:construct-external}
Assume that the chamber complex of $A$ contains at least two chambers, and let $f$ be a $(d-1)$-dimensional cone in the chamber complex. 
Then $f$ is an external facet of $A$ if and only if $f$ is a facet of $\pos(A)$ containing exactly $d-1$ columns of $A$. Moreover, if the columns of $A$ generating $f$ are $\BF{a}_1, \dots, \BF{a}_{d-1}$, then the unique external chamber containing $f$ is
\begin{equation*}
\gamma :=
\bigcap\limits_{k=0}^{n-d} \pos(\BF{a}_1, \dots, \BF{a}_{d-1}, \BF{a}_{d+k}).
\end{equation*}

\end{prop}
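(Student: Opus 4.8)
The plan is to establish the biconditional and the explicit formula in three steps: ($\Rightarrow$) an external facet is a facet of $\pos(A)$ carrying exactly $d-1$ columns of $A$; ($\Leftarrow$) conversely, such a facet is external; and then the identification of the external chamber. The two recurring tools are Lemma~\ref{lem:unique-facet} and the standing hypothesis $\ker(A)\cap\mathbb{R}^n_{\geq 0}=\{\BF{0}\}$, which prevents a column of $A$ and its negative from both lying in $\pos(A)$.

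For ($\Rightarrow$): by Proposition~\ref{prop:chamber-simplicial} an external chamber is simplicial, say $\gamma=\pos(\BF{a}_1,\dots,\BF{a}_{d-1},\BF{w})$ with the $\BF{a}_i$ external columns and $\BF{w}$ the internal ray generator, so that $f=\pos(\BF{a}_1,\dots,\BF{a}_{d-1})$. Let $\BF{n}_1,\dots,\BF{n}_{d-1},\BF{n}_f$ be the inner facet normals of $\gamma$, with $\BF{n}_f$ the normal of the facet $f$; suitable positive multiples of these form the basis dual to $\{\BF{a}_1,\dots,\BF{a}_{d-1},\BF{w}\}$. By Lemma~\ref{lem:unique-facet} the facet of $\gamma$ missing $\BF{a}_i$ is unique and its supporting hyperplane separates $\BF{a}_i$ from $\pos(A_{\cdot,\hat{i}})$; since $\gamma\subseteq H_{\BF{n}_i}^+$, this gives $\BF{n}_i\cdot\BF{a}_m\leq 0$ for every column $\BF{a}_m$ with $m\neq i$. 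Now expand an arbitrary column $\BF{a}_m\notin\{\BF{a}_1,\dots,\BF{a}_{d-1}\}$ in the dual basis: its coefficients on $\BF{a}_1,\dots,\BF{a}_{d-1}$ are all $\leq 0$, and if in addition $\BF{n}_f\cdot\BF{a}_m<0$ its coefficient on $\BF{w}$ is negative too, so $-\BF{a}_m\in\pos(\BF{a}_1,\dots,\BF{a}_{d-1},\BF{w})=\gamma\subseteq\pos(A)$; together with $\BF{a}_m\in\pos(A)$ this contradicts $\ker(A)\cap\mathbb{R}^n_{\geq 0}=\{\BF{0}\}$. Hence $\BF{n}_f\cdot\BF{a}_m\geq 0$ for all $m$, so $H_{\BF{n}_f}$ supports $\pos(A)$; a nearly identical argument shows no column other than $\BF{a}_1,\dots,\BF{a}_{d-1}$ lies on $H_{\BF{n}_f}$, so $\pos(A)\cap H_{\BF{n}_f}=f$ is a facet of $\pos(A)$ carrying exactly those $d-1$ columns.

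For ($\Leftarrow$) and the formula: suppose $f$ is a facet of $\pos(A)$ carrying exactly the columns $\BF{a}_1,\dots,\BF{a}_{d-1}$, which are then linearly independent and generate $f$; let $H$ be its supporting hyperplane with inner normal $\BF{n}$. First, each $\BF{a}_i$ is external: writing $\BF{a}_i=\sum_{m\neq i}\lambda_m\BF{a}_m$ with $\lambda_m\geq 0$ and pairing with $\BF{n}$ forces every $\BF{a}_m$ with $\lambda_m>0$ onto $H$, hence into $\{\BF{a}_1,\dots,\BF{a}_{d-1}\}$, contradicting independence. Re-index the remaining columns as $\BF{a}_d,\dots,\BF{a}_n$; each lies strictly off $H$, so $\sigma_m:=\pos(\BF{a}_1,\dots,\BF{a}_{d-1},\BF{a}_m)$ is a simplicial cone of $A$ for $m=d,\dots,n$. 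By Proposition~\ref{prop:external-simplicial}, any simplicial cone of $A$ containing an external $\BF{a}_i$ has $i$ in its index set, so for a fixed $\BF{b}$ in the relative interior of $f$ the simplicial cones of $A$ containing $\BF{b}$ are exactly $\sigma_d,\dots,\sigma_n$, whence $\operatorname{cone}(\BF{b})=\bigcap_{k=0}^{n-d}\sigma_{d+k}=:\gamma$ is a cone of the chamber complex. To see $\gamma$ is a $d$-dimensional chamber, note $f$, being a $(d-1)$-dimensional cone of the fan, is a facet of some chamber $\gamma^{*}$; perturbing a relative-interior point of $f$ slightly into the interior of $\gamma^{*}$ yields, for a sufficiently small perturbation, a point satisfying all $d$ facet inequalities of every $\sigma_m$, hence lying in $\gamma$ while remaining in the interior of $\gamma^{*}$, so $\gamma^{*}\subseteq\gamma$ and therefore $\gamma=\gamma^{*}$. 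Finally, in the basis $(\BF{a}_1,\dots,\BF{a}_{d-1},\BF{a}_d)$ a direct computation shows $\gamma=\{\BF{x}: x_d\geq 0\text{ and }x_i\geq\beta_i x_d\text{ for }1\leq i\leq d-1\}$, where $\beta_i\geq 0$ is the maximum over $d\leq m\leq n$ of the $i$-th coordinate of $\BF{a}_m$ divided by its $d$-th coordinate; this is a pointed $d$-dimensional cone cut out by exactly $d$ halfspaces, whose normals are therefore linearly independent, so $\gamma$ is simplicial with ray generators $\BF{a}_1,\dots,\BF{a}_{d-1}$ and exactly one further ray. Thus $\gamma$ is an external chamber with external facet $f$, and it is the only chamber containing $f$ since $f\subseteq\partial\pos(A)$ is a facet of exactly one chamber.

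The step I expect to be the main obstacle is showing that $\gamma=\bigcap_m\sigma_m$ is genuinely $d$-dimensional — a priori it could collapse onto $f$ — and that it has exactly $d$ facets; this is handled by the perturbation argument, which wedges a near-$f$ interior point of the chamber $\gamma^{*}$ simultaneously into every $\sigma_m$, together with the coordinate description of $\gamma$. A minor point is the degenerate configuration where $\gamma$ would be generated solely by external columns: this forces $n=d$ and a single chamber, so it is precisely what the hypothesis of at least two chambers rules out, ensuring $\gamma$ possesses a genuine internal ray generator.
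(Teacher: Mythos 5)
Your proof is correct and shares the paper's skeleton (Lemma \ref{lem:unique-facet} plus the condition $\ker(A)\cap\mathbb{R}^n_{\geq 0}=\{\BF{0}\}$ for the forward direction; the intersection $\bigcap_k\sigma_{d+k}$ realized as $\operatorname{cone}(\BF{b})$ for a point of $f$ in the reverse direction), but several sub-steps are executed differently. In the forward direction the paper splits into two cases ($f$ not a facet of $\pos(A)$; $f$ a facet carrying extra columns), handling the second by observing that an extra column would force $f$ to be subdivided in the chamber complex; your dual-basis expansion treats both cases uniformly by pushing every offending column to $-\BF{a}_m\in\pos(A)$, which is arguably cleaner. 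In the reverse direction, the paper gets $\dim\gamma=d$ from the one-line remark that all the $\sigma_m$ lie on the same side of $f$, proves simpliciality by re-invoking Lemma \ref{lem:unique-facet}, and exhibits the internal ray by producing an explicit column $\BF{c}\notin\gamma$ with $\gamma\subsetneq\pos(\BF{a}_1,\dots,\BF{a}_{d-1},\BF{c})$; you instead use a perturbation off a relative-interior point of $f$ into a chamber $\gamma^*$ having $f$ as a facet (together with the fact that a chamber meeting a simplicial cone $d$-dimensionally is contained in it), an explicit half-space description of $\gamma$ in the basis $(\BF{a}_1,\dots,\BF{a}_{d-1},\BF{a}_d)$, and a reduction of the "all rays external'' case to $n=d$. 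Each of these substitutes is sound and, in the case of the dimension argument, somewhat more careful than the paper's.

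One small repair is needed in your identification of the simplicial cones containing $\BF{b}\in\operatorname{relint}(f)$: Proposition \ref{prop:external-simplicial} gives $i\in s$ only once you know $\BF{a}_i\in\pos(A_s)$, which is not what you have. The correct route (implicit in the paper's use of $\BF{b}=\BF{a}_1+\dots+\BF{a}_{d-1}$) is to pair a representation $\BF{b}=\sum_{j\in s}\lambda_j\BF{a}_j$ with the inner normal $\BF{n}$ of $f$ as a facet of $\pos(A)$: this forces $\lambda_j=0$ for every $j$ with $\BF{a}_j\notin f$, and linear independence of $\BF{a}_1,\dots,\BF{a}_{d-1}$ together with $\BF{b}\in\operatorname{relint}(f)$ then forces $\{1,\dots,d-1\}\subseteq s$. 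You already have $\BF{n}$ in hand, so this is a one-line fix rather than a gap in the method.
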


\begin{proof}
We begin with the forward direction. Suppose that $f$ is an external facet of $A$, and assume towards a contradiction that $f$ is not a facet of $\pos(A)$ containing exactly $d-1$ columns of $A$. Since $f$ is an external facet of $A$, it is a facet of an external chamber $\gamma'$ of $A$. Since $\gamma'$ is simplicial, it has $d$ facets, say $f_1, \dots, f_{d-1}, f_d = f$. Assume moreover that $f_i$ is the unique facet of $\gamma'$ not containing $\BF{a}_i$ for each $i=1, \dots, d-1$. Let $\BFG{\iota}_1, \dots, \BFG{\iota}_d$ be the inner facet normals (with respect to~$\gamma'$) corresponding to the facets $f_1, \dots, f_d$, and $H_1, \dots, H_d$ be the corresponding supporting hyperplanes. Since $f$ is not a facet of $A$ containing exactly $d-1$ columns of $A$, there are two options to consider
\begin{enumerate}
    \item $f$ is not a facet of $\pos(A)$,
    \item $f$ is a facet of $\pos(A)$, but contains more than $d-1$ columns of $A$.
\end{enumerate}

In the first case there is some column $\BF{a}$ of $A$ with $\BFG{\iota}_d \cdot \BF{a} \leq 0$ and $\BF{a} \notin \gamma$. By Lemma \ref{lem:unique-facet}, the hyperplane $H_j$ separates the column $\BF{a}_j$ from the cone $\pos(A_{\cdot, \hat{j}})$ for each $j=1, \dots, d-1$. Therefore, $\BFG{\iota}_j \cdot \BF{a} \leq 0$ for each $j=1, \dots, d-1$. Then $-\BF{a} \in \gamma',$ since $\BFG{\iota}_j \cdot (-\BF{a}) \geq 0$ for each $j=1, \dots, d$. Now, let $\pos(A_s)$ be a simplicial cone of $A$ for some $s \subseteq \{1, \dots, n\}$ with $|s| = \rank(A_s) = d$ so that $\gamma' \subseteq \pos(A_s).$ Since $-\BF{a} \in \gamma'$, $-\BF{a} \in \pos(A_s)$, and so $-\BF{a} = \sum_{i \in s} \lambda_i \BF{a}_i$ for some $\lambda_i \geq 0.$ But then, $\sum_{i \in s} \lambda_i \BF{a}_i + \BF{a} = \BF{0}$, and so $\ker(A) \cap \mathbb{R}^d_{\geq 0} \neq \{\BF{0}\}$. This is a contradiction. Therefore, $f$ is indeed a facet of $\pos(A)$, and $f$ contains exactly the $d-1$ columns of $A$, $\BF{a}_1, \dots, \BF{a}_{d-1}.$

In the second case, there is some column $\BF{a} \in f$ with $\BF{a} \notin \{\BF{a}_1, \dots \BF{a}_{d-1}\}$. However, since $\BF{a}$ is a column of $A$, it generates a 1-dimensional cone of the chamber complex of $A$, and so $f = \pos(\BF{a}_1, \dots \BF{a}_{d-1})$ cannot be a cone of the chamber complex. This contradicts the fact that $f$ is an external facet. 

We now prove the reverse direction. If $f$ is a facet of $\pos(A)$ containing exactly $d-1$ columns of $A$, then each of these columns is a ray generator of $\pos(A)$ and no pair is linearly dependent. Therefore, each of $\BF{a}_1, \dots, \BF{a}_{d-1}$ are external columns of~$A$.

Finally, we show that $\gamma$ is indeed a chamber of $A$. First note that none of the columns $\BF{a}_d, \dots, \BF{a}_n$ lie on $f$, and so $\pos(\BF{a}_1, \dots, \BF{a}_{d-1}, \BF{a}_{d+k})$ is a simplical cone of $A$ for each $k~=0,~\dots,n-d$. Therefore $\gamma$ is the intersection of simplicial cones, and since each of these simplicial cones lie on the same side of the facet $f$, the cone $\gamma$ must be $d$-dimensional. Consider the point 
\begin{equation} \label{eq:pos-hull-combination}
    \BF{b} := \BF{a}_1 + \dots + \BF{a}_{d-1}.
\end{equation}
 Since $\{\BF{a}_1, \dots, \BF{a}_{d-1}\}$ is a linearly independent set, and $f$ is a facet of $\pos(A)$, the formulation of \eqref{eq:pos-hull-combination} is the unique way to represent $\BF{b}$ as a $\mathbb{N}$-linear combination of the columns of $A$. Therefore, any simplicial cone $\pos(A_s)$ of $A$ containing $\BF{b}$ must contain each of the external columns $\BF{a}_1, \dots, \BF{a}_{d-1}$. Moreover, $\BF{a}_1, \dots, \BF{a}_{d-1}$ are each ray generators of $\pos(A_s)$ by Proposition~\ref{prop:external-simplicial}. Therefore, $\gamma$ is a chamber of $A$ since it is a $d$-dimensional cone obtained as the intersection of all simplicial cones containing $\BF{b}$. Furthermore, $\gamma$ is the unique external chamber containing $f$, since any other $d$-dimensional cone containing $\BF{b}$ and obtained by an intersection of simplicial cones of $A$ (necessarily containing all of $\BF{a}_1, \dots, \BF{a}_{d-1}$ as ray generators) must contain $\gamma$ as a subset.

 By Lemma \ref{lem:unique-facet}, there is a unique facet of $\gamma$ not containing $\BF{a}_i$ for each $1~\leq~i~\leq~d~-~1$, and so $\gamma$ is simplicial. Therefore, $\gamma = \pos(\BF{a}_1, \dots, \BF{a}_{d-1}, \BF{v})$ for some ray generator~$\BF{v}$. Since $A$ has at least two chambers, it must have some column $\BF{c} \notin \gamma$. As well, $\BF{c}~\notin~f$, so $\gamma'~:=~\pos(\BF{a}_1, \dots, \BF{a}_{d-1}, \BF{c})$ is a simplicial cone of~$A$, and $\gamma \subsetneq \gamma'$. Therefore, it follows that $\BF{v} \in \pos(\BF{a}_1, \dots, \BF{a}_{d-1}, \BF{c})$ and so $\BF{v}$ is an internal ray generator for $\gamma$. Thus, $\gamma$ is an external chamber of $A$ and $f$ is an external facet of $A$. 
\end{proof}

The previous result allows us to compute external chambers without having to compute the entire chamber complex of $A$, which can be computationally intensive. We note however that in some cases there are no external chambers. For example, for the matrix
\begin{equation*}
    K_4 = 
    \begin{bmatrix}
        1 & 1 & 1 & 1 & 0 & 0 & 0 & 0 & 0 & 0 \\
        -1 & 0 & 0 & 0 & 1 & 1 & 1 & 0 & 0 & 0 \\
        0 & -1 & 0 & 0 & -1 & 0 & 0 & 1 & 1 & 0 \\
        0 & 0 & -1 & 0 & 0 & -1 & 0 & -1 & 0 & 1
    \end{bmatrix}
\end{equation*}
the chamber complex has $48$ chambers, none of which are external. The matrix $K_4$ is part of the same family of matrices associated to Kostant's partition as $K_3$. 

Finally, we remark that external facets of $A$ are exactly the facets $f$ of $\pos(A)$ for which $p_A(\BF{b}) \leq 1$ for all $\BF{b} \in f$. 

\subsection{A vector partition function preserving transformation} \label{subsec:transformation}

We now prove some results that allow us to transform the matrix $A$ while preserving the vector partition function (up to an appropriate change of variables) and the structure of the chamber complex. We use these results in Section \ref{sec:dim-reduction} in order to transform $A$ into a form well-suited for analysis (described in Lemma~\ref{lem:dim-reduction}). Propositions \ref{prop:invertible-map} and \ref{prop:chamber} are straight-forward results, and have been assumed by other authors, so we give them without proof.

\begin{prop} \label{prop:invertible-map}
Let $M \in \mathbb{Q}^{d \times d}$ be an invertible matrix with integer entries. Then $p_A(\BF{b}) = p_{MA}(M\BF{b})$ for all $\BF{b} \in \mathbb{Z}^d$. 
\end{prop}

For a cone $\sigma \subseteq \mathbb{R}^d$ and invertible matrix $M \in \mathbb{Q}^{d \times d}$, define the cone $M\sigma := \{M\BF{b} : \BF{b} \in \sigma\}$. We note that $\{\BF{u}_1, \dots \BF{u}_k\}$ is a generating set of $\sigma$ if and only if $\{M\BF{u}_1, \dots M\BF{u}_k\}$ is a generating set of $M\sigma$. 

\begin{prop} \label{prop:chamber}
Let $M \in \mathbb{Q}^{d \times d}$ be an invertible matrix. The cone~$\gamma$ is a chamber of~$A$ if and only if  $M\gamma$ is a chamber of $MA$. Moreover, $\{\BF{u}_1, \dots, \BF{u}_k\}$ is a minimal generating set of $\gamma$ if and only if $\{M\BF{u}_1, \dots, M\BF{u}_k\}$ is a minimal generating set of $M\gamma$.
\end{prop}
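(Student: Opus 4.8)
\textbf{Proof proposal for Proposition~\ref{prop:chamber}.}

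The plan is to reduce everything to the two basic facts recorded just before the statement: that $M$ is a bijection $\mathbb{R}^d \to \mathbb{R}^d$ carrying cones to cones, and that $\{\BF{u}_1, \dots, \BF{u}_k\}$ generates $\sigma$ if and only if $\{M\BF{u}_1, \dots, M\BF{u}_k\}$ generates $M\sigma$. First I would observe that since $M$ is linear and invertible, it preserves linear independence, dimension, and containment of cones; moreover for any column index set $s$ we have $M\pos(A_s) = \pos((MA)_s)$ because $(MA)_s = M(A_s)$. In particular $|s| = \rank(A_s) = d$ holds for $A$ exactly when it holds for $MA$, so $\pos(A_s)$ is a simplicial cone of $A$ if and only if $\pos((MA)_s) = M\pos(A_s)$ is a simplicial cone of $MA$.

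Next I would handle the chamber claim. Using the explicit description of the chamber complex from Section~\ref{subsec:vpf}, for any $\BF{b} \in \pos(A)$ the cone $\operatorname{cone}(\BF{b}) = \bigcap \{\pos(A_s) : \BF{b} \in \pos(A_s),\ |s|=\rank(A_s)=d\}$ satisfies $M\operatorname{cone}(\BF{b}) = \operatorname{cone}(M\BF{b})$: indeed $\BF{b} \in \pos(A_s) \iff M\BF{b} \in M\pos(A_s) = \pos((MA)_s)$, and $M$ commutes with intersections since it is a bijection. Because $M$ preserves dimension, $\operatorname{cone}(\BF{b})$ is $d$-dimensional (hence a chamber of $A$) if and only if $\operatorname{cone}(M\BF{b})$ is a chamber of $MA$, and every chamber arises this way (take $\BF{b}$ in its interior, using fact (3) of the listed properties). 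Running the same argument with $M^{-1}$ in place of $M$ gives the converse direction, so $\gamma$ is a chamber of $A$ iff $M\gamma$ is a chamber of $MA$.

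Finally, the minimal generating set claim: a minimal generating set of a cone $\gamma$ consists precisely of one generator for each ray (edge) of $\gamma$, and minimality is characterized intrinsically by the fact that no listed generator lies in the cone generated by the others. Since $M$ is an order-isomorphism on cones (preserving $\subseteq$ and $\pos$), $\BF{u}_i \in \pos(\{\BF{u}_\ell : \ell \neq i\})$ if and only if $M\BF{u}_i \in \pos(\{M\BF{u}_\ell : \ell \neq i\})$; combined with the generating-set fact quoted above, $\{\BF{u}_1, \dots, \BF{u}_k\}$ is a minimal generating set of $\gamma$ iff $\{M\BF{u}_1, \dots, M\BF{u}_k\}$ is a minimal generating set of $M\gamma$. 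I do not anticipate a genuine obstacle here — the whole proposition is a bookkeeping exercise in pushing a linear isomorphism through the definitions — so the only thing to be careful about is making sure the bijectivity of $M$ is invoked at each point where intersections, dimensions, or containments are transported, and that both directions are obtained symmetrically via $M^{-1}$.
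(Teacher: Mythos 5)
Your proof is correct. Note that the paper deliberately gives no proof of Proposition~\ref{prop:chamber} (it is declared straightforward and ``assumed by other authors''), so there is nothing to compare against; your argument --- transporting the defining intersection $\operatorname{cone}(\BF{b}) = \bigcap \pos(A_s)$ through the linear isomorphism via $M\pos(A_s) = \pos((MA)_s)$, using dimension-preservation for maximality, and characterizing minimal generating sets of a pointed cone by the condition that no generator lies in the cone of the others --- is exactly the routine verification the author intends the reader to supply, and it correctly avoids any appeal to primitivity of generators (which, as the remark following the proposition warns, is \emph{not} preserved unless $\det M = \pm 1$).
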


\begin{rem}
The previous proposition does not always hold if we replace ``minimal generating set'' with ``minimal ray generators''. In the case that $\gamma$ is a simplicial cone with minimal ray generators $\BF{v}_1, \dots, \BF{v}_d$, then $M\BF{v}_1, \dots, M\BF{v}_d$ are minimal ray generators of $M\gamma$ if and only if $M$ is invertible over $\mathbb{Z}$ (equivalently $\det(M) = \pm 1$). 
\end{rem}

\begin{prop} \label{prop:external-column} 
Let $M \in \mathbb{Q}^{d \times d}$ be an invertible matrix. Let $\BF{a}_j$ be a column of $A$ for some $j~\in~\{1, \dots, n\}$. Then
$\BF{a}_j$ is an external column of $A$ if and only if $M\BF{a}_j$ is an external column of $MA$. 
\end{prop}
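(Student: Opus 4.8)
The plan is to reduce the statement to a purely cone-theoretic fact about the invertible linear map $\BF{b} \mapsto M\BF{b}$, namely that it preserves the ``positive hull'' (cone) operation, and then invoke Proposition~\ref{prop:external-col-equivalence} (or, even more directly, the very definition of external column). First I would record the elementary observation, already noted in the excerpt just before Proposition~\ref{prop:chamber}, that for any finite set of vectors $\BF{u}_1, \dots, \BF{u}_k$ one has $M\bigl(\pos(\BF{u}_1, \dots, \BF{u}_k)\bigr) = \pos(M\BF{u}_1, \dots, M\BF{u}_k)$; this is immediate from linearity of $M$ since $M(\sum \lambda_i \BF{u}_i) = \sum \lambda_i M\BF{u}_i$ and the $\lambda_i$ range over the nonnegative reals in both cases. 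In particular, applying this to the columns of $A_{\cdot, \hat{j}}$ gives $M\bigl(\pos(A_{\cdot, \hat{j}})\bigr) = \pos\bigl((MA)_{\cdot, \hat{j}}\bigr)$, since the columns of $MA$ are exactly the $M\BF{a}_i$ and deleting the $j$th column commutes with left-multiplication by $M$.

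Next I would use injectivity of $M$: for any $\BF{v} \in \mathbb{R}^d$ and any set $S \subseteq \mathbb{R}^d$, $\BF{v} \in S$ if and only if $M\BF{v} \in M(S)$ (the forward direction is trivial and the reverse uses that $M$ is invertible). Combining these two facts, $\BF{a}_j \notin \pos(A_{\cdot, \hat{j}})$ holds if and only if $M\BF{a}_j \notin M\bigl(\pos(A_{\cdot, \hat{j}})\bigr) = \pos\bigl((MA)_{\cdot, \hat{j}}\bigr)$, which is precisely the statement that $\BF{a}_j$ is an external column of $A$ iff $M\BF{a}_j$ is an external column of $MA$. One small bookkeeping point to address: the definition of external column presupposes that the ambient matrix satisfies the standing hypotheses (rank $d$, and trivial nonnegative kernel); since $M$ is invertible, $MA$ still has rank $d$ and $\ker(MA) = \ker(A)$, so $\ker(MA) \cap \mathbb{R}^n_{\geq 0} = \{\BF{0}\}$ as well, and the notion of external column is well-defined for $MA$.

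There is essentially no hard part here — the statement is a formal consequence of linearity and invertibility of $M$. The only thing requiring any care is making sure the cone-image identity is applied to the correct submatrix (that $M(A_{\cdot,\hat j})$ equals $(MA)_{\cdot,\hat j}$, not something else), and noting the well-definedness remark above so that ``external column of $MA$'' is meaningful. Because the argument is so short, I would write it out in full rather than merely sketch it; alternatively, one could phrase it entirely through Proposition~\ref{prop:external-col-equivalence} by checking that $M$ takes ray generators of $\pos(A)$ to ray generators of $\pos(MA)$ (which follows from Proposition~\ref{prop:chamber}'s minimal-generating-set statement applied to the cone $\pos(A)$, viewing it as generated by the columns of $A$) and takes spans to spans, but the direct computation with $\pos(A_{\cdot,\hat j})$ is cleaner and self-contained.
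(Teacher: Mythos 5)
Your argument is correct and is essentially identical to the paper's own proof: both reduce the claim to the fact that the invertible linear map $\BF{b} \mapsto M\BF{b}$ carries $\pos(A_{\cdot,\hat{j}})$ bijectively onto $\pos((MA)_{\cdot,\hat{j}})$, so membership of $\BF{a}_j$ in the former is equivalent to membership of $M\BF{a}_j$ in the latter. Your extra remark that $MA$ still satisfies the standing hypotheses is a harmless (and slightly more careful) addition.
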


\begin{proof}
Assume without loss of generality that $j=1$.
Points $\BF{b}$ in $\pos(A_{\cdot, \hat{1}})$ map to points in $\pos(MA_{\cdot, \hat{1}})$ under the invertible mapping $\BF{b} \mapsto M\BF{b}$:
\begin{align*}
\BF{b} \in \pos(A_{\cdot, \hat{1}}) &\iff \BF{b} = \sum\limits_{i=2}^{n}\lambda_i\BF{a}_i & (\text{for } \lambda_2, \dots, \lambda_n \geq 0) \\
& \iff M\BF{b} = \sum\limits_{i=2}^{n}\lambda_iM\BF{a}_i \\
& \iff M\BF{b} \in \pos(MA_{\cdot, \hat{1}})
\end{align*}
Therefore $\BF{a}_1 \notin \pos(A_{\cdot, \hat{1}})$ if and only if $M\BF{a}_1 \notin \pos(MA_{\cdot, \hat{1}})$, and so $\BF{a}_1$ is an external column if and only if $M\BF{a}_1$ is an external column.
\end{proof}

\begin{prop} \label{prop:external-chamber}
Let $M \in \mathbb{Q}^{d \times d}$ be an invertible matrix. Then
$\gamma$ is an external chamber of $A$ if and only if $M\gamma$ is an external chamber of $MA$. 
\end{prop}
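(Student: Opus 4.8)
The plan is to combine the three preceding propositions—Proposition~\ref{prop:chamber} (chambers are preserved under an invertible map, together with their minimal generating sets), Proposition~\ref{prop:external-column} (external columns are preserved), and the definition of an external chamber—so that the statement becomes essentially a bookkeeping argument. First I would fix an invertible $M \in \mathbb{Q}^{d\times d}$ and let $\gamma$ be a chamber of $A$; by Proposition~\ref{prop:chamber}, $M\gamma$ is a chamber of $MA$, and moreover $\{\BF{u}_1,\dots,\BF{u}_k\}$ is a minimal generating set of $\gamma$ if and only if $\{M\BF{u}_1,\dots,M\BF{u}_k\}$ is a minimal generating set of $M\gamma$. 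Since $\gamma$ is a chamber of $A$, it is a $d$-dimensional cone, so its minimal generating set consists of one generator per ray of $\gamma$; the bijection $\BF{u}\mapsto M\BF{u}$ therefore induces a bijection between the rays of $\gamma$ and the rays of $M\gamma$.

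Next I would check that this ray-bijection matches up external rays with external rays. A ray $r$ of $\gamma$ is external precisely when it is generated by an external column $\BF{a}_j$ of $A$; equivalently, when some element of the minimal generating set of $\gamma$ is a positive multiple of a column $\BF{a}_j$ of $A$ that is external. Under $M$, that generator maps to a positive multiple of $M\BF{a}_j$, which is a column of $MA$, and by Proposition~\ref{prop:external-column}, $M\BF{a}_j$ is an external column of $MA$ if and only if $\BF{a}_j$ is an external column of $A$. Conversely, every external column of $MA$ is of the form $M\BF{a}_j$ with $\BF{a}_j$ an external column of $A$, again by Proposition~\ref{prop:external-column}, so an external ray of $M\gamma$ corresponds to an external ray of $\gamma$. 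Hence the number of external rays of $\gamma$ equals the number of external rays of $M\gamma$.

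Finally I would unwind the definition: $\gamma$ is an external chamber of $A$ iff $\gamma$ is a chamber and all but one of its rays is external. Since $M\gamma$ is a chamber iff $\gamma$ is, and the two cones have the same total number of rays (equal to their common dimension $d$) and the same number of external rays by the previous paragraph, "all but one ray of $\gamma$ is external" holds iff "all but one ray of $M\gamma$ is external". This gives the equivalence. I expect no genuine obstacle here—the only point requiring a little care is the remark following Proposition~\ref{prop:chamber}, namely that $M$ need not send minimal ray generators to minimal ray generators; but the definition of external chamber is phrased in terms of which \emph{rays} are generated by external columns, not in terms of the minimal ray generators themselves, so the failure of integrality of $M$ is irrelevant, and it suffices to track rays and membership of generators in $\pos$-spans of columns, both of which are preserved by any invertible $M$.
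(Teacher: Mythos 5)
Your proof is correct and follows the same route as the paper, which simply observes that the statement is an immediate consequence of Propositions~\ref{prop:chamber} and~\ref{prop:external-column}; you have merely spelled out the bookkeeping. (One immaterial slip: the parenthetical claim that a chamber has exactly $d$ rays presumes simpliciality, which general chambers need not satisfy, but your argument only uses that $M$ induces a bijection on rays preserving externality, so nothing is affected.)
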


\begin{proof}
This follows immediately from Propositions \ref{prop:chamber} and \ref{prop:external-column}.
\end{proof}

In the following section, we show that one can always construct an invertible matrix $M \in \mathbb{Q}^{d \times d}$ with integer entries that maps each of the external columns of $A$ in $\gamma$ to positive multiples of the standard basis vectors. From this form we are able to treat certain variables as slack variables in order to reduce dimension.

\section{Reduction of dimension} \label{sec:dim-reduction}

In this section, we consider chambers of $A$ that contain external columns. Up to a lattice condition, we show that the quasi-polynomial $p_A^{\gamma}$ for such a chamber $\gamma$ can be obtained via a vector partition function of lower dimension. In particular, if $\gamma$ has $k$ external columns, then (up to a change of variables) $p_A^{\gamma} = p_B^{\gamma'}$ for a matrix $B$ of $k$ fewer rows and columns than $A$, and chamber $\gamma'$ of $B.$ When this result is applied to external chambers, we find that $B$ has a single row, so that $p^{\gamma}_A$ is obtained from a coin exchange problem. As a consequence, such a $p_A^{\gamma}$ is a univariate quasi-polynomial. Indeed, $p_A^{\gamma}$ is precisely the Ehrhart quasi-polynomial associated to the internal ray of $\gamma$.

Let $\lattice(A)$ denote the lattice generated by the columns of $A$, and $\semigroup{A}$ denote the affine semigroup generated by the columns of $A$ -- that is, 
\begin{align*}
    \lattice(A) := \left\{\sum_{i=1}^{n} \lambda_i\BF{a}_i : \lambda_i \in \mathbb{Z}\right\}, \\
    \semigroup{A} := \left\{\sum_{i=1}^{n}\lambda_i\BF{a}_i : \lambda_i \in \mathbb{N}\right\}.
\end{align*}

An affine semigroup $\mathcal{S}$ is \emph{saturated} in a lattice $\mathcal{L}$ if for each positive integer $k$ and $\BF{v} \in \mathcal{L}$, $k\BF{v} \in \mathcal{S}$ only if $\BF{v} \in \mathcal{S}$. In our setting, we are generally interested in showing that affine semigroup generated by a subset of columns of $A$ is saturated in the lattice generated by the columns of $A$ -- that is, that $\semigroup{A_s}$ is saturated in the lattice $\mathcal{L}(A)$ for some $s \subseteq \{1, 2, \dots, n\}$. This is equivalent to the condition that
\begin{equation}
    \semigroup{A_s} = \lattice(A) \cap \pos(A_s)
\end{equation}
for which it is sufficient to show prove the reverse inclusion $\semigroup{A_s} \supseteq \lattice(A) \cap \pos(A_s)$\footnote{See \cite[Proposition 1.1]{Od88} for details.}.

The following technical lemma shows that under certain conditions a quasi-polynomial $p_A^{\gamma}$ can be obtained from the vector partition function of $B$ for a submatrix $B$ of $A$. The proof involves several intermediate results and is worked out in detail in Section \ref{sec:lem-proof}. 

\begin{lem}  \label{lem:dim-reduction}
Let $\gamma$ be a chamber of $A$. Assume without loss of generality that the external columns of $A$ in $\gamma$ are $\BF{a}_1, \dots, \BF{a}_{\ell}$ for some $\ell \in \{0,\dots, d-1\}$. Also assume that $\BF{a}_i = k_i\BF{e}_i$ for each $i \in \{1, \dots, \ell\}$ and some positive integers $k_1, \dots, k_{\ell}.$ Finally assume that $\semigroup{\BF{a}_1, \dots, \BF{a}_{\ell}}$ is saturated in $\mathcal{L}(A)$.
Let $B$ be the matrix obtained by removing the first $\ell$ rows and columns of $A$. 
Then there exists a chamber $\gamma'$ of $B$ such that
\begin{equation*} 
p^{\gamma}_A(\BF{b}) = p_{B}^{\gamma'}(b_{\ell+1}, \dots, b_d) 
\end{equation*}
for all $\BF{b} = (b_1, \dots, b_n) \in \semigroup{A} \cap \gamma$.
\end{lem}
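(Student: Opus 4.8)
The plan is to set up the obvious candidate bijection and then verify it works, using the saturation hypothesis precisely where it is needed. Write $\BF{b} = (b_1, \dots, b_d)$ and split $\BF{b} = (\BF{b}', \BF{b}'')$ with $\BF{b}' = (b_1, \dots, b_\ell)$ and $\BF{b}'' = (b_{\ell+1}, \dots, b_d)$. Because $\BF{a}_i = k_i \BF{e}_i$ for $i \le \ell$, the matrix $A$ has block form $\left[\begin{smallmatrix} D & C \\ 0 & B \end{smallmatrix}\right]$, where $D = \operatorname{diag}(k_1, \dots, k_\ell)$, $C$ is $\ell \times (n-\ell)$, and $B$ is $(d-\ell)\times(n-\ell)$. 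Writing $\BF{x} = (\BF{x}', \BF{x}'')$ accordingly, the system $A\BF{x} = \BF{b}$ with $\BF{x} \ge \BF{0}$ becomes $B\BF{x}'' = \BF{b}''$ together with $D\BF{x}' = \BF{b}' - C\BF{x}''$, i.e. $x_i = (b_i - (C\BF{x}'')_i)/k_i$ for each $i \le \ell$. First I would observe that for a fixed feasible $\BF{x}'' \in \mathbb{N}^{n-\ell}$ with $B\BF{x}'' = \BF{b}''$, there is exactly one real $\BF{x}'$ solving the top block, so $p_A(\BF{b})$ equals the number of $\BF{x}'' \in \mathbb{N}^{n-\ell}$ with $B\BF{x}'' = \BF{b}''$ for which this unique $\BF{x}'$ lies in $\mathbb{N}^\ell$ (nonnegativity and integrality). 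This already gives $p_A(\BF{b}) \le p_B(\BF{b}'')$; the content is to show equality for $\BF{b}$ in the interior-relevant part of $\gamma$.

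Next I would show that for $\BF{b} \in \semigroup{A} \cap \gamma$, the side conditions $x_i \in \mathbb{N}$, $i \le \ell$, are automatic. For nonnegativity: I would use Lemma~\ref{lem:unique-facet}. Each external column $\BF{a}_i = k_i\BF{e}_i$ ($i\le\ell$) sits in $\gamma$, and the unique facet of $\gamma$ not containing $\BF{a}_i$ lies on a hyperplane $H_i$ separating $\BF{a}_i$ from $\pos(A_{\cdot,\hat i})$; since $\gamma$ is simplicial (Proposition~\ref{prop:chamber-simplicial} when $\gamma$ is external, but in general $\gamma$ need not be simplicial — here I would instead argue facet-by-facet) the inner facet normal to $H_i$ is, up to scaling, $\BF{e}_i$, because $H_i^+ \ni \BF{a}_i = k_i \BF{e}_i$ and $H_i$ contains columns of $A_{\cdot,\hat i}$. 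Hence every $\BF{b} \in \gamma$ has $b_i \ge 0$, and more to the point, running the same argument inside the simplicial cone $\pos(\BF{a}_1,\dots,\BF{a}_{d-1},\BF{a}_{d+k})$ used to build $\gamma$ shows $\BF{b}' - C\BF{x}'' \ge \BF{0}$ whenever $\BF{x}'' \ge \BF{0}$ and $B\BF{x}''=\BF{b}''$ with $(\BF{b}',\BF{b}'')\in\gamma$: indeed $A\BF{x} = \BF{b}$ forces $\BF{x}' = D^{-1}(\BF{b}' - C\BF{x}'')$, and the $i$-th coordinate functional is (a multiple of) $\BF{e}_i$, an inner facet normal of $\gamma$, so it is nonnegative on $\gamma \ni \BF{b}$. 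For integrality: $D\BF{x}' = \BF{b}' - C\BF{x}'' \in \lattice(A)$ (it is an $\mathbb{N}$-combination of columns $\BF{a}_i - \sum x''_j \BF{a}_j$... more carefully, $\BF{b}' - C\BF{x}''$ as a vector in $\mathbb{Z}^\ell$ extended by $\BF{0}$ equals $\BF{b} - \sum_{j>\ell} x''_j \BF{a}_j - (\BF{0},\BF{b}'') \in \lattice(A)$, and it lies in $\pos(\BF{a}_1,\dots,\BF{a}_\ell)$ by the nonnegativity just shown), so by saturation of $\semigroup{\BF{a}_1,\dots,\BF{a}_\ell}$ in $\lattice(A)$ we get $\BF{x}' \in \mathbb{N}^\ell$. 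Thus $\BF{x}''\mapsto(\BF{x}',\BF{x}'')$ is a bijection between $\{\BF{x}''\in\mathbb{N}^{n-\ell}: B\BF{x}''=\BF{b}''\}$ and $\{\BF{x}\in\mathbb{N}^n: A\BF{x}=\BF{b}\}$, giving $p_A(\BF{b}) = p_B(\BF{b}'')$.

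Finally I would produce the chamber $\gamma'$ of $B$ and conclude. Here I would use the facts that $\gamma$ is the intersection of the simplicial cones $\pos(\BF{a}_1,\dots,\BF{a}_\ell,\BF{a}_{i_1},\dots,\BF{a}_{i_{d-\ell}})$ of $A$ containing it, and that projection $\pi:\mathbb{R}^d\to\mathbb{R}^{d-\ell}$ forgetting the first $\ell$ coordinates sends such a cone to $\pos(\BF{b}_{i_1}'',\dots,\BF{b}_{i_{d-\ell}}'')$, a simplicial cone of $B$ (the columns $\BF{b}_j''$ of $B$), because the first $\ell$ generators are $k_i\BF{e}_i$ which $\pi$ kills; one checks $\pi(\gamma)$ is the common refinement piece, i.e. a cone $\gamma'$ of the chamber complex of $B$, and that it is $(d-\ell)$-dimensional hence a chamber. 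Since $p_A(\BF{b}) = p_B(\pi(\BF{b}))$ for all $\BF{b}\in\semigroup{A}\cap\gamma$ and $\pi(\semigroup{A}\cap\gamma)$ is Zariski-dense in $\gamma'$ (it contains all lattice points of a full-dimensional subcone), the quasi-polynomial $p_A^\gamma$ agrees with $p_B^{\gamma'}\circ\pi$ on a dense set of $\gamma$, hence identically. The step I expect to be the main obstacle is the careful verification that $\pi(\gamma)$ is genuinely a single chamber $\gamma'$ of $B$ (not a union, and full-dimensional) — this requires knowing that the simplicial cones of $A$ through $\gamma$ project to exactly the simplicial cones of $B$ through $\gamma'$ and that no collapsing or overlap of lower-dimensional strata occurs; controlling this cleanly is where I anticipate the real work, and it is presumably what the intermediate results of Section~\ref{sec:lem-proof} are arranged to handle.
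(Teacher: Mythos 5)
Your overall architecture is the same as the paper's: put $A$ in block-triangular form, treat $x_1,\dots,x_\ell$ as slack variables determined by $\BF{x}''$, use saturation for integrality, use a separating-hyperplane argument via Lemma~\ref{lem:unique-facet} for nonnegativity, and identify the projected chamber. The difference is that the paper executes this by induction on $\ell$, peeling off one external column at a time, whereas you do all $\ell$ at once — and that is exactly where the proposal breaks.

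The concrete gap is the nonnegativity step. Your justification rests on the claim that the inner facet normal of the unique facet of $\gamma$ not containing $\BF{a}_i=k_i\BF{e}_i$ is a multiple of $\BF{e}_i$. That is false in general: for $A=[\BF{e}_1,\BF{e}_2,\BF{e}_1+\BF{e}_2]$ and $\gamma=\pos(\BF{e}_1,\BF{e}_1+\BF{e}_2)$, the facet not containing the external column $\BF{e}_1$ is $\pos(\BF{e}_1+\BF{e}_2)$, with inner normal $(1,-1)$. Even granting the claim, ``$\BF{e}_i$ is nonnegative on $\gamma$'' yields only $b_i\ge 0$, not the required $b_i\ge(C\BF{x}'')_i$. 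The correct argument (the paper's chain \eqref{eq:dot-prod}--\eqref{eq:with-b1}) uses the actual normal $\BFG{\iota}_i$, combining $\BFG{\iota}_i\cdot\BF{b}\ge 0$ with $\BFG{\iota}_i\cdot\BF{a}_j\le 0$ for all $j\ne i$ \emph{and} the nonnegativity of all the other coordinates of $\BF{x}$. In your all-at-once setup that last input is unavailable: for fixed $\BF{x}''\ge\BF{0}$ the resulting inequality for $x'_i$ has the form $(\BFG{\iota}_i)_i k_i x'_i\ge-\sum_{j\le\ell,\,j\ne i}(\BFG{\iota}_i)_j k_j x'_j+(\text{nonnegative})$, whose right-hand side depends on the unknown signs of the other slacks $x'_j$, so the $\ell$ inequalities cannot be resolved independently. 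This circularity is precisely what the paper's induction avoids: at each stage only one slack variable is unknown and everything else is already a nonnegative integer. Separately, the identification of $\pi(\gamma)$ with a single full-dimensional chamber of $B$ — which you correctly flag as the hard point but defer — is a genuine piece of the proof (Propositions~\ref{prop:simplicial-simplicial}, \ref{prop:simplicial-b} and \ref{prop:chambers-for-smaller}), and the reverse direction of Proposition~\ref{prop:simplicial-b} again runs through Lemma~\ref{lem:unique-facet}; leaving it as a black box leaves the proposal incomplete. Your integrality argument via saturation of $\semigroup{\BF{a}_1,\dots,\BF{a}_\ell}$ is essentially correct and matches the paper's use of the hypothesis.
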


In the previous lemma, the condition that the external columns $\gamma$ are given by positive integer multiples of standard basis vectors may appear contrived.
However, we now show that if $A$ has a chamber $\gamma$ whose external columns generate an affine semigroup saturated in $\lattice(A)$, one can always apply an appropriate change of variables so that the pair $A$ and $\gamma$ are in the form of Lemma~\ref{lem:dim-reduction}.

For a cone $\sigma$ fix an ordering $\BF{v}_1, \dots, \BF{v}_m$ of the minimal ray generators of $\sigma$. We define the \emph{ray matrix} $M_{\sigma}$ of $\sigma$ to be the matrix whose rows are the minimal ray generators of $\sigma$. If $\sigma$ is a simplicial cone with minimal ray generators $\BF{v}_1, \BF{v}_2, \dots, \BF{v}_d$, let $\BF{w}_1, \dots, \BF{w}_d$ be minimal ray generators of $\sigma^{\vee}$ so that $\BF{w}_i$ is the minimal inner facet normal of the sole facet of $\sigma$ not containing $\BF{v}_i$. We abuse notation by setting $M_{\sigma^{\vee}}$ to be the matrix whose rows are $\BF{w}_1, \dots, \BF{w}_d$ (so that the order of rows of $M_{\sigma^{\vee}}$ is set by the ordering of minimal ray generators of $\sigma$). We call $M_{\sigma^{\vee}}$ the \emph{dual ray matrix} of $\sigma$. We remark that for each $i=1, \dots, d$
\begin{align*}
    M_{\sigma^{\vee}}\BF{v}_i &= 
    \begin{bmatrix}
         \BF{w}_1 \cdot \BF{v}_i \\
         \BF{w}_2 \cdot \BF{v}_i \\
        \vdots \\
        \BF{w}_d \cdot \BF{v}_i
    \end{bmatrix} \\
    &= k_i\BF{e}_i
\end{align*}
for some positive integer $k_i$.

\begin{theo} \label{theo:external-col-variables}
    Let $A$ be a $d \times n$ matrix of rank $d$ with integer entries, and let $\gamma$ be a chamber of $A$. Without loss of generality assume that $\BF{a}_1, \dots, \BF{a}_{\ell}$ are the external columns of $\gamma$. Assume additionally that $\semigroup{\BF{a}_1, \dots, \BF{a}_{\ell}}$ is saturated in $\lattice(A)$. Let $\sigma$ be a simplicial cone of $A$ containing $\gamma$, and let $B$ be the matrix obtained by removing the first $\ell$ rows and columns from $M_{\sigma^{\vee}}A$.
    Then 
    \begin{equation*}
        p^{\gamma}_A(\BF{b}) = p^{\gamma'}_{B}\left((M_{\sigma^{\vee}}\BF{b})_{\ell + 1}, \dots, (M_{\sigma^{\vee}}\BF{b})_{d}\right)
    \end{equation*}
    for all $\BF{b} \in \gamma \cap \semigroup{A}$. 
\end{theo}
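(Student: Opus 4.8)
The plan is to reduce Theorem~\ref{theo:external-col-variables} to Lemma~\ref{lem:dim-reduction} by applying the change of variables $\BF{b} \mapsto M_{\sigma^{\vee}}\BF{b}$ and checking that the hypotheses of the lemma are satisfied for the transformed matrix $A' := M_{\sigma^{\vee}}A$ together with the transformed chamber $M_{\sigma^{\vee}}\gamma$. First I would invoke Proposition~\ref{prop:invertible-map} to get $p_A(\BF{b}) = p_{A'}(M_{\sigma^{\vee}}\BF{b})$; since $M_{\sigma^{\vee}}$ is invertible over $\mathbb{Q}$ with integer entries (its rows are the minimal inner facet normals of the simplicial cone $\sigma$), this is legitimate, and it descends to the chamber level: by Proposition~\ref{prop:chamber}, $\gamma$ is a chamber of $A$ iff $M_{\sigma^{\vee}}\gamma$ is a chamber of $A'$, so $p_A^{\gamma}(\BF{b}) = p_{A'}^{M_{\sigma^{\vee}}\gamma}(M_{\sigma^{\vee}}\BF{b})$ for $\BF{b} \in \gamma \cap \semigroup{A}$ (noting $M_{\sigma^{\vee}}(\gamma \cap \semigroup{A}) = M_{\sigma^{\vee}}\gamma \cap \semigroup{A'}$).

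Next I would verify the three structural hypotheses of Lemma~\ref{lem:dim-reduction} for the pair $(A', M_{\sigma^{\vee}}\gamma)$. The external columns: by Proposition~\ref{prop:external-column}, $\BF{a}_1, \dots, \BF{a}_{\ell}$ being the external columns of $A$ in $\gamma$ is equivalent to $M_{\sigma^{\vee}}\BF{a}_1, \dots, M_{\sigma^{\vee}}\BF{a}_{\ell}$ being the external columns of $A'$ in $M_{\sigma^{\vee}}\gamma$ (since "external column lying in $\gamma$" transports correctly — external-ness transports by Prop.~\ref{prop:external-column} and membership in $\gamma$ transports since $M_{\sigma^{\vee}}$ is a bijection). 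The standard-basis form: this is the key computation already recorded just before the theorem statement, namely $M_{\sigma^{\vee}}\BF{v}_i = k_i\BF{e}_i$ for the minimal ray generators $\BF{v}_1, \dots, \BF{v}_d$ of $\sigma$; since each external column $\BF{a}_i$ ($1 \le i \le \ell$) is a ray generator of $\gamma \subseteq \sigma$, and (by Proposition~\ref{prop:external-simplicial} applied with $s$ the index set of $\sigma$) it must in fact be one of the ray generators of $\sigma$ — so, after ordering the minimal ray generators of $\sigma$ so that the first $\ell$ of them are positive scalar multiples of $\BF{a}_1, \dots, \BF{a}_{\ell}$, we get $M_{\sigma^{\vee}}\BF{a}_i = k_i' \BF{e}_i$ for some positive integer $k_i'$. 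The saturation condition: by the lattice-preservation under the invertible integral-and-inverse-rational map — more precisely, since $\semigroup{\BF{a}_1,\dots,\BF{a}_\ell}$ is saturated in $\lattice(A)$ and $M_{\sigma^{\vee}}$ is a linear bijection carrying $\semigroup{\BF{a}_1,\dots,\BF{a}_\ell}$ onto $\semigroup{M_{\sigma^{\vee}}\BF{a}_1,\dots,M_{\sigma^{\vee}}\BF{a}_\ell}$ and $\lattice(A)$ onto $\lattice(A')$ (both bijectively), the defining property $k\BF{v} \in \mathcal{S} \Rightarrow \BF{v} \in \mathcal{S}$ is preserved, so $\semigroup{M_{\sigma^{\vee}}\BF{a}_1, \dots, M_{\sigma^{\vee}}\BF{a}_{\ell}}$ is saturated in $\lattice(A')$.

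With all hypotheses checked, Lemma~\ref{lem:dim-reduction} applied to $(A', M_{\sigma^{\vee}}\gamma)$ yields a chamber $\gamma'$ of $B = (A')_{\text{remove first } \ell \text{ rows, columns}}$ with
\begin{equation*}
p_{A'}^{M_{\sigma^{\vee}}\gamma}(\BF{b}') = p_B^{\gamma'}(b'_{\ell+1}, \dots, b'_d)
\end{equation*}
for $\BF{b}' \in \semigroup{A'} \cap M_{\sigma^{\vee}}\gamma$. Substituting $\BF{b}' = M_{\sigma^{\vee}}\BF{b}$ and combining with the identity from the first paragraph gives exactly $p_A^{\gamma}(\BF{b}) = p_B^{\gamma'}\big((M_{\sigma^{\vee}}\BF{b})_{\ell+1}, \dots, (M_{\sigma^{\vee}}\BF{b})_d\big)$ for $\BF{b} \in \gamma \cap \semigroup{A}$, which is the claim.

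I expect the main obstacle to be bookkeeping around the ordering of the ray generators of $\sigma$: one needs to know not merely that $\BF{a}_1, \dots, \BF{a}_{\ell}$ are ray generators of $\sigma$ but that they occupy the first $\ell$ slots in the ordering that defines $M_{\sigma^{\vee}}$, and that the resulting $\BF{e}_i$'s are indexed consistently with the rows deleted to form $B$. This is harmless — one simply fixes the ordering of the minimal ray generators of $\sigma$ with $\BF{a}_1,\dots,\BF{a}_\ell$ first — but it is the place where an inattentive argument could go wrong. A secondary, more genuinely delicate point is making the semigroup/lattice transport under $M_{\sigma^{\vee}}$ watertight: $M_{\sigma^{\vee}}$ is invertible over $\mathbb{Q}$ but not necessarily over $\mathbb{Z}$, yet this does not matter here because saturation is a statement purely about the pair $(\semigroup{\cdot}, \lattice{\cdot})$ and any $\mathbb{Q}$-linear bijection carrying one such pair to another preserves it; I would state this as a short standalone observation before the main argument.
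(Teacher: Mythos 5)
Your proposal is correct and follows essentially the same route as the paper: transport everything by $M_{\sigma^{\vee}}$ via Propositions \ref{prop:invertible-map}--\ref{prop:external-chamber}, use the observation that the external columns of $\gamma$ are ray generators of $\sigma$ so that $M_{\sigma^{\vee}}\BF{a}_i = k_i\BF{e}_i$ after a suitable ordering of the rows, check saturation is preserved, and then invoke Lemma \ref{lem:dim-reduction}. Your explicit justification of the saturation transport (a $\mathbb{Q}$-linear bijection preserves the pair $(\semigroup{\cdot},\lattice{\cdot})$ even when $M_{\sigma^{\vee}}$ is not invertible over $\mathbb{Z}$) is a point the paper only asserts, so that added care is welcome rather than a deviation.
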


\begin{proof}
    Since $\gamma$ is a chamber of $A$, it is contained in some simplicial cone $\sigma$ of $A$, and so the dual ray matrix $M := M_{\sigma^{\vee}}$ is well defined. Additionally, each of $\BF{a}_1, \dots, \BF{a}_{\ell}$ are ray generators of $\sigma$ since they are external columns of $A$, and are each in $\sigma$. Let $\BF{v}_1, \dots, \BF{v}_{\ell}$ be minimal ray generators of $\sigma$ in the same direction as $\BF{a}_1, \dots, \BF{a}_{\ell}$ respectively.
    Without loss of generality assume that row $i$ of $M_{\sigma^{\vee}}$ corresponds to minimal ray generator $\BF{v}_i$ for each $i =1, \dots, \ell$ (ordering the remainder of the rows arbitrarily).
    Consider the matrix $MA$ with columns $\BF{m}_1 := M\BF{a}_1, \dots, \BF{m}_n := M\BF{a}_n$. Our goal is to apply Lemma~\ref{lem:dim-reduction} with the matrix $MA$, chamber $M\gamma$, and columns $\BF{m}_1, \dots, \BF{m}_{\ell}$ so we show that each of its conditions are met.

    The matrix $M_{\sigma^{\vee}}$ is invertible over $\mathbb{Q}$ and has integer entries, and thus satisfies the conditions of Propositions \ref{prop:invertible-map}--\ref{prop:external-chamber}. Therefore, the vector partition functions of $A$ and $M_{\sigma^{\vee}}A$ are the same up to a change of variables. More precisely, 
    \begin{equation} \label{eq:vpf-invertible}
        p_A^{\gamma}(\BF{b}) = p_{MA}^{M\gamma}(M\BF{b})
    \end{equation}
    for all $\BF{b} \in \gamma \cap \semigroup{A}.$
    
    By Proposition \ref{prop:chamber}, $M\gamma$ is a chamber of $MA$, and  $\BF{m}_1, \dots, \BF{m}_{\ell}$ are ray generators of~$M\gamma$. By Proposition~\ref{prop:external-column} they are each external columns of $MA$. Additionally, $\semigroup{\BF{m}_1, \dots, \BF{m}_{\ell}}$ is saturated in $\lattice(MA)$. Since $\BF{a}_1, \dots, \BF{a}_{\ell}$ are positive integer multiples of $\BF{v}_1, \dots, \BF{v}_{\ell}$ respectively, by our final remark before this theorem, it follows that $\BF{m}_1 = M\BF{a}_1 = k_1\BF{e}_1, \dots, \BF{m}_{\ell} = M\BF{a}_{\ell} = k_{\ell}\BF{e}_{\ell}$ for some positive integers $k_1, \dots, k_{\ell}$. 
    
    Therefore $MA$, $M\gamma$, and $\BF{m}_1, \dots, \BF{m}_{\ell}$ do indeed meet the conditions of Lemma \ref{lem:dim-reduction}, and so 
    \begin{equation} \label{eq:vpf-decompose}
        p^{M\gamma}_{MA}(M\BF{b}) = p^{\gamma'}_{B}((M\BF{b})_{\ell+1}, \dots, (M\BF{b})_{d})
    \end{equation}
    for each $\BF{b}~\in~\gamma~\cap~\semigroup{A}$. Putting together \eqref{eq:vpf-invertible} and \eqref{eq:vpf-decompose} yields the result. 
\end{proof}

In essence, (up to saturation) one can reduce the dimension of the vector partition function for a particular chamber by the number of external columns present in that chamber. 

If the chamber $\gamma$ is simplicial, one can obtain a slightly nicer result. By replacing $M_{\sigma}^{\vee}$ with $M_{\gamma}^{\vee}$ (and thus also appropriately updating $B$ and $\gamma'$) in Theorem \ref{theo:external-col-variables}, we obtain that $\gamma'$ is the positive orthant $\mathbb{R}^{d - \ell}$ (see Section \ref{sec:lem-proof}, Lemma \ref{lem:chamber-dimension-lower} for details). That is, $\gamma'$ is the cone defined by the inequalities $M_{\gamma^{\vee}}\BF{b}_{\ell+1}, \dots, M_{\gamma^{\vee}}\BF{b}_{d} \geq 0.$

We now give an example to illustrate our main result.

\begin{eg} \label{eg:main-theorem}
    Consider the matrix 
    \begin{equation*}
        G_6 = \left(\begin{array}{rrrrrrrrrrrrrrr}
1 & 1 & 1 & 1 & 1 & 0 & 0 & 0 & 0 & 0 & 0 & 0 & 0 & 0 & 0 \\
1 & 0 & 0 & 0 & 0 & 1 & 1 & 1 & 1 & 0 & 0 & 0 & 0 & 0 & 0 \\
0 & 1 & 0 & 0 & 0 & 1 & 0 & 0 & 0 & 1 & 1 & 1 & 0 & 0 & 0 \\
0 & 0 & 1 & 0 & 0 & 0 & 1 & 0 & 0 & 1 & 0 & 0 & 1 & 1 & 0 \\
0 & 0 & 0 & 1 & 0 & 0 & 0 & 1 & 0 & 0 & 1 & 0 & 1 & 0 & 1 \\
0 & 0 & 0 & 0 & 1 & 0 & 0 & 0 & 1 & 0 & 0 & 1 & 0 & 1 & 1
\end{array}\right)
    \end{equation*}
    and the chamber 
    \begin{equation*}
        \gamma = \pos\left(
        \BF{v}_1 := 
        \begin{bmatrix}
            1 \\ 0 \\ 0 \\ 0 \\ 1 \\ 0
        \end{bmatrix},\
        \BF{v}_2 := 
        \begin{bmatrix}
            1 \\ 0 \\ 0 \\ 1 \\ 0 \\ 0
        \end{bmatrix},\
        \BF{v}_3 :=
        \begin{bmatrix}
            1 \\ 0 \\ 1 \\ 0 \\ 0 \\ 0
        \end{bmatrix},\
        \BF{v}_4 :=
        \begin{bmatrix}
            1 \\ 1 \\ 0 \\ 0 \\ 0 \\ 0 
        \end{bmatrix},\
        \BF{v}_5 :=
        \begin{bmatrix}
            2 \\ 1 \\ 1 \\ 1 \\ 1 \\ 0
        \end{bmatrix},\
        \BF{v}_6 :=
        \begin{bmatrix}
            3 \\ 1 \\ 1 \\ 1 \\ 1 \\ 1
        \end{bmatrix}
        \right).
    \end{equation*}
    The first four ray generators of $\gamma$ are external, and generate an affine semigroup that is saturated in $\lattice(G_6)$.
    Therefore, by application of Theorem \ref{theo:external-col-variables}, we are able to compute $p_{G_6}^{\gamma}(\BF{d})$ (our choice of variable relates to the enumeration of multigraphs) by computing the vector partition function for a matrix with four fewer rows and columns. We now compute such a matrix. Since $\gamma$ is simplicial, we use the transformation \begin{equation*}
        M_{\gamma^{\vee}} := \begin{bmatrix} 
        1 & -1 & -1 & -1 & 1 & -1 \\
        1 & -1 & -1 & 1 & -1 & -1 \\
        1 & -1 & 1 & -1 & -1 & -1 \\
        1 & 1 & -1 & -1 & -1 & -1 \\
        -1 & 1 & 1 & 1 & 1 & -1 \\
        0 & 0 & 0 & 0 & 0 & 1
        \end{bmatrix}
    \end{equation*}
    (where the $i^{th}$ row of $M_{\gamma^{\vee}}$ is the minimal inner normal of the sole facet of $\gamma$ not containing $\BF{v}_i$). 
    We then compute that
    \begin{equation*}
        M_{\gamma^{\vee}}G_6 = \begin{bmatrix}
            0 & 0 & 0 & 2 & 0 & -2 & -2 & 0 & -2 & -2 & 0 & -2 & 0 & -2 & 0 \\
0 & 0 & 2 & 0 & 0 & -2 & 0 & -2 & -2 & 0 & -2 & -2 & 0 & 0 & -2 \\
0 & 2 & 0 & 0 & 0 & 0 & -2 & -2 & -2 & 0 & 0 & 0 & -2 & -2 & -2 \\
2 & 0 & 0 & 0 & 0 & 0 & 0 & 0 & 0 & -2 & -2 & -2 & -2 & -2 & -2 \\
0 & 0 & 0 & 0 & -2 & 2 & 2 & 2 & 0 & 2 & 2 & 0 & 2 & 0 & 0 \\
0 & 0 & 0 & 0 & 1 & 0 & 0 & 0 & 1 & 0 & 0 & 1 & 0 & 1 & 1
        \end{bmatrix}
    \end{equation*}
    and so, by removing the first 4 rows and columns of $M_{\gamma^{\vee}}G_6$, we obtain the matrix 
    \begin{equation*}
        B := \begin{bmatrix}
-2 & 2 & 2 & 2 & 0 & 2 & 2 & 0 & 2 & 0 & 0 \\
1 & 0 & 0 & 0 & 1 & 0 & 0 & 1 & 0 & 1 & 1
    \end{bmatrix}.
     \end{equation*}
     By our previous observation, we have that
     \begin{equation*}
         p_A^{\gamma}(\BF{d}) = p_B^{\gamma'}((M_{\gamma^{\vee}}G_6\BF{d})_5, ((M_{\gamma^{\vee}}G_6\BF{d})_6) = p_B^{\gamma'}(-d_1 + d_2 + d_3 + d_4 + d_5 - d_6, d_6)
     \end{equation*}
     where $\gamma'$ is the positive quadrant in $\mathbb{R}^2$ generated by $\BF{e}_1, \BF{e}_2$. 
     Finally, we compute that 
     \begin{equation*}
         p_B^{\gamma'}(\BF{b}) = \begin{cases}
             h(\BF{b}) &\text{ if } b_1 \equiv 0 \mod 2 \\
             0 &\text{ if } b_1 \equiv 1 \mod 2
         \end{cases}
     \end{equation*}
     where 
     \begin{dmath*}
     h(\BF{b}) = 
             \left(\frac{1}{5806080}\right) (b_{2} + 1)  (b_{2} + 2) (b_{2} + 3) (b_{2} + 4) (63 b_{1}^{5} + 126 b_{1}^{4} b_{2} + 168 b_{1}^{3} b_{2}^{2} + 144 b_{1}^{2} b_{2}^{3} + 72 b_{1} b_{2}^{4} + 16 b_{2}^{5} + 1890 b_{1}^{4} + 3360 b_{1}^{3} b_{2} + 3600 b_{1}^{2} b_{2}^{2} + 2160 b_{1} b_{2}^{3} + 560 b_{2}^{4} + 21420 b_{1}^{3} + 32040 b_{1}^{2} b_{2} + 24600 b_{1} b_{2}^{2} + 7760 b_{2}^{3} + 113400 b_{1}^{2} + 128400 b_{1} b_{2} + 53200 b_{2}^{2} + 276192 b_{1} + 180384 b_{2} + 241920).
     \end{dmath*}
     Therefore, we find that 
     \begin{equation*}
         p_A^{\gamma}(\BF{d}) = \begin{cases}
             h\left(- d_1 + d_2 + d_3 + d_4 + d_5 - d_6, d_6 \right) &\text{ if } d_1 + d_2 + d_3 + d_4 + d_5 + d_6 \equiv 0 \mod 2 \\
             0 &\text{ if } d_1 + d_2 + d_3 + d_4 + d_5 + d_6 \equiv 1 \mod 2
         \end{cases}
     \end{equation*}
     where we have exploited the property that $d_1 + d_2 + d_3 + d_4 + d_5 + d_6 \equiv -d_1 + d_2 + d_3 + d_4 + d_5 - d_6 \mod 2$.
     We study the matrix $G_6$, which is in a class of matrices related to the enumeration of multigraphs, in more detail in Section \ref{sec:multigraph}. We complete this example by remarking that $p_A^{\gamma}(\BF{d})$ enumerates the number of loopless multigraphs on vertex set $v_1, \dots, v_6$ such that the degree of vertex $v_i$ is $d_i$ and the degrees satisfy
     \begin{align*}
        d_1 + d_j \geq \sum_{\substack{i = 2 \\ i \neq j}}^{6} d_i, \\
        d_1 + d_6 \leq \sum_{i = 2}^{5} d_i.
     \end{align*}
\end{eg}

\section{External chamber case} \label{sec:external-chamber-case}

If $\gamma$ is an external chamber whose external columns generate an affine semigroup saturated in $\lattice(A)$, then $B$ is a $1 \times (n-d)$ matrix, and so the quasi-polynomial $p_A^{\gamma}$ arises from a coin exchange problem.
By exploiting this fact, we prove that $p_A^{\gamma}$ can also be obtained from the Ehrhart quasipolynomial associated to the single internal ray of $\gamma$ after an appropriate change of variables. 

\subsection{Determinantal formula} \label{subsec:coin-exchange}

\begin{theo} \label{theo:ehrhart-det}
Let $A$ be a $d \times n$ matrix of rank $d$ with integer entries. Let $\gamma$ be an external chamber of $A$, and without loss of generality assume that the external columns of~$\gamma$ are $\BF{a}_1, \dots, \BF{a}_{d-1}$. Assume additionally that $\semigroup{\BF{a}_1, \dots, \BF{a}_{d-1}}$ is saturated in $\lattice(A)$. Denote by 
$\BF{v}_1, \BF{v}_2, \dots, \BF{v}_{d-1} \in \mathbb{Z}^d$ the external ray generators corresponding to $\BF{a}_1, \dots, \BF{a}_{d-1}$ respectively, and let $\BF{v}_d \in \mathbb{Z}^d$ be an internal ray generator.
If $h(t) := p_A(t\BF{v}_d)$ is the Ehrhart quasi-polynomial associated to the polytope $A\BF{x} = \BF{v}_d$, $\BF{x} \geq \BF{0}$, then the quasi-polynomial $p_A^{\gamma}(\BF{b})$ associated to $\gamma$ is equal to 
\begin{equation*}
    p_A^{\gamma}(\BF{b}) = h\left(\frac{\det(\BF{v}_1, \dots, \BF{v}_{d-1}, \BF{b})}{\det(\BF{v}_1, \BF{v}_2, \dots, \BF{v}_{d})}\right)
\end{equation*}
for all $\BF{b} \in \gamma \cap \semigroup{A}$. 
\end{theo}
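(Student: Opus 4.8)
The plan is to reduce to the one-dimensional (coin-exchange) picture via Theorem~\ref{theo:external-col-variables} and then identify the resulting univariate quasi-polynomial explicitly as the Ehrhart quasi-polynomial $h$, tracking the change of variables through a determinant computation. First I would invoke Theorem~\ref{theo:external-col-variables} with $\ell = d-1$: since $\gamma$ is an external chamber with external columns $\BF{a}_1, \dots, \BF{a}_{d-1}$ generating a semigroup saturated in $\lattice(A)$, there is a simplicial cone $\sigma \supseteq \gamma$ so that, setting $M := M_{\sigma^{\vee}}$ and letting $B$ be obtained from $MA$ by deleting the first $d-1$ rows and columns, we get $p_A^{\gamma}(\BF{b}) = p_B^{\gamma'}\big((M\BF{b})_d\big)$ for all $\BF{b} \in \gamma \cap \semigroup{A}$. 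Here $B$ is a $1 \times (n-d)$ matrix and $\gamma'$ is a one-dimensional chamber (a ray) of $B$, so $p_B^{\gamma'}$ is a univariate quasi-polynomial in the single argument $(M\BF{b})_d$. Because $\gamma$ is simplicial (Proposition~\ref{prop:chamber-simplicial}), I may in fact take $\sigma = \gamma$ and $M = M_{\gamma^{\vee}}$, so that $\gamma'$ is the nonnegative ray in $\mathbb{R}^1$, as noted after the theorem.

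Next I would pin down the univariate quasi-polynomial. By construction $M\BF{v}_i = k_i\BF{e}_i$ for $i = 1, \dots, d-1$ and $M\BF{v}_d = k_d\BF{e}_d$ for positive integers $k_i$ (the remark preceding Theorem~\ref{theo:external-col-variables}). Evaluating the identity $p_A^{\gamma}(\BF{b}) = p_B^{\gamma'}\big((M\BF{b})_d\big)$ at $\BF{b} = t\BF{v}_d$ (which lies in $\gamma \cap \semigroup{A}$ for appropriate $t$) gives $p_A(t\BF{v}_d) = p_B^{\gamma'}(k_d t)$, i.e. $h(t) = p_B^{\gamma'}(k_d t)$; equivalently $p_B^{\gamma'}(u) = h(u/k_d)$ as quasi-polynomials in $u$ (this substitution is legitimate on the arithmetic progression where $u$ is a multiple of $k_d$, which is exactly where $(M\BF{b})_d$ lands for $\BF{b} \in \semigroup{A} \cap \gamma$, using the saturation hypothesis; more carefully, one argues the two quasi-polynomials agree on a Zariski-dense-enough set of residues to conclude the functional identity). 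Combining, $p_A^{\gamma}(\BF{b}) = h\big((M\BF{b})_d / k_d\big)$.

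It then remains to show $(M\BF{b})_d / k_d = \det(\BF{v}_1, \dots, \BF{v}_{d-1}, \BF{b}) / \det(\BF{v}_1, \dots, \BF{v}_d)$. Write $\BF{w}_d$ for the $d$th row of $M = M_{\gamma^{\vee}}$, the minimal inner facet normal of the facet of $\gamma$ not containing $\BF{v}_d$, so $(M\BF{b})_d = \BF{w}_d \cdot \BF{b}$. Since $\BF{w}_d$ vanishes on $\BF{v}_1, \dots, \BF{v}_{d-1}$ and $\BF{w}_d \cdot \BF{v}_d = k_d$, the linear functional $\BF{b} \mapsto \BF{w}_d \cdot \BF{b}$ agrees with $\BF{b} \mapsto k_d \cdot \det(\BF{v}_1, \dots, \BF{v}_{d-1}, \BF{b})/\det(\BF{v}_1, \dots, \BF{v}_d)$ on the basis $\BF{v}_1, \dots, \BF{v}_d$ (the latter also vanishes on $\BF{v}_1, \dots, \BF{v}_{d-1}$ by repeated columns and takes value $k_d$ at $\BF{v}_d$), hence everywhere; dividing by $k_d$ gives the claimed identity. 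Plugging this into $p_A^{\gamma}(\BF{b}) = h\big((M\BF{b})_d/k_d\big)$ yields the theorem.

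The main obstacle I anticipate is the middle step: carefully justifying that the univariate quasi-polynomial $p_B^{\gamma'}$ is determined by its values $h(t)$ at the lattice points $t\BF{v}_d$, i.e. that the substitution $u \mapsto u/k_d$ is valid at the level of quasi-polynomials and not merely at sampled points. This needs the saturation condition on $\semigroup{\BF{a}_1, \dots, \BF{a}_{d-1}}$ (so that the values of $(M\BF{b})_d$ as $\BF{b}$ ranges over $\semigroup{A} \cap \gamma$ really do fill out the full arithmetic progression $k_d\mathbb{N}$, equivalently the argument of $p_B^{\gamma'}$ is not artificially restricted), together with the agreement of two quasi-polynomials on a full set of residues modulo the (common) period. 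Everything else — invoking Theorem~\ref{theo:external-col-variables}, the determinant identity — is routine linear algebra.
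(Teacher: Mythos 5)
Your proposal is correct and follows essentially the same route as the paper: reduce to a one-row matrix $B$ via Theorem~\ref{theo:external-col-variables} with $M = M_{\gamma^{\vee}}$, identify the resulting univariate quasi-polynomial with $h$ by evaluating along the internal ray $\BF{v}_d$, and convert $(M\BF{b})_d/k_d = \lambda_d$ into the ratio of determinants by Cramer's rule. The only slip is cosmetic: $B$ is $1 \times (n-d+1)$, not $1 \times (n-d)$, since only $d-1$ columns are removed.
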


\begin{proof}
Since $\gamma$ is an external chamber, by Proposition \ref{prop:chamber-simplicial}, it is simplicial, and so the dual ray matrix of $\gamma$ exists. Let $M := M_{\gamma^{\vee}}$ following the same ordering as the ray generators $\BF{v}_1, \dots, \BF{v}_d$. Let $\BF{b} \in \gamma \cap \semigroup{A}$. Then 
\begin{equation} \label{eq:b-lin-coeff}
    \BF{b} = \lambda_1\BF{v}_1 + \dots + \lambda_d\BF{v}_d 
\end{equation}
for some $\lambda_1, \dots, \lambda_d \geq 0$, and so
\begin{equation*}
    M\BF{b} = \lambda_1k_1\BF{e}_1 + \dots + \lambda_dk_d\BF{e}_d.
\end{equation*}
By Theorem \ref{theo:external-col-variables}, we have
\begin{equation*}
    p_A^{\gamma}(\BF{b}) = p_B(\lambda_dk_d)
\end{equation*}
where $B$ is the $1 \times (n-d+1)$ matrix obtained by removing the first $d-1$ rows and columns of the matrix $MA$. On the other hand, by setting $\lambda_1 = \ldots = \lambda_{d-1} = 0$ in Eq.~\eqref{eq:b-lin-coeff}, we find that $p_A^{\gamma}(\lambda_d\BF{v}_d) = p_B(\lambda_dk_d)$ as well. Therefore, $p^{\gamma}_A(\BF{b}) = p^{\gamma}_A(\lambda_d\BF{v}_d) = h(\lambda_d)$. Finally, by Cramer's rule
\begin{equation*}
    \lambda_d = \frac{\det(\BF{v}_1, \dots, \BF{v}_{d-1}, \BF{b})}{\det(\BF{v}_1, \BF{v}_2, \dots, \BF{v}_d)}
\end{equation*}
and so as quasi-polynomials,
\begin{equation*}
    p_A^{\gamma}(\BF{b}) = h\left(\frac{\det(\BF{v}_1, \dots, \BF{v}_{d-1}, \BF{b})}{\det(\BF{v}_1, \BF{v}_2, \dots, \BF{v}_{d}))}\right)
\end{equation*}
as required.

\end{proof}

\begin{eg} \label{eg:periodicity}
Recall the matrix 
\[
A^{2,2} = \begin{bmatrix}
    1 & 0 & 1 & 1 \\
    0 & 1 & 1 & 2
\end{bmatrix}
\]
whose columns we denote by $\BF{a}_1, \BF{a}_2, \BF{a}_3, \BF{a}_4$.
The chamber 
\[
\gamma_1 = \pos(\BF{a}_1, \BF{a}_3)
\]
is an external chamber with external column $\BF{a}_1$. From Figure \ref{fig:eg1}, we can see that the column $\BF{a}_3$ is an internal ray generator for $\gamma_1$. In particular we can take $\BF{v}_1 := \BF{a}_1$ to be the external ray generator and $\BF{v}_2 := \BF{a}_3$ to be the internal ray generator. Let $h(t) = p_{A^{2,2}}\left(t\BF{a}_3\right) $ be the Ehrhart quasi-polynomial associated to the internal ray of $\gamma_1$.
We can compute using \emph{Latte}:
\begin{equation*}
    h(t) = \begin{cases}
        \frac{(t+2)^2}{4} \quad &\text{ if } t \equiv 0 \mod 2 \\
        \frac{(t+1)(t+3)}{4} \quad &\text{ if } t \equiv 1 \mod 2.
    \end{cases}
\end{equation*}
By Theorem \ref{theo:ehrhart-det} we deduce that
\begin{align*}
p^{\gamma_1}_{A^{2,2}}(\BF{b}) &= 
h\left(\frac{\det\left(
\BF{a}_1, \BF{b}
\right)}
{\det\left(
\BF{a}_1, \BF{a}_2
\right)}\right) \\
&= h\left(\frac{\det\left(
\begin{bmatrix}
    1 & b_1 \\
    0 & b_2
\end{bmatrix}
\right)}
{\det\left(
\begin{bmatrix}
    1 & 1 \\
    0 & 1
\end{bmatrix}
\right)}\right) \\
&= \begin{cases}
        \frac{(b_2+2)^2}{4} \quad &\text{ if } b_2 \equiv 0 \mod 2 \\
        \frac{(b_2+1)(b_2+3)}{4} \quad &\text{ if } b_2 \equiv 1 \mod 2.
    \end{cases}
\end{align*} 
This agrees with previous computations \cite{MiRoSu21}, and the output of \emph{Barvinok}.
\end{eg}

In the previous example, we could have also applied Theorem \ref{theo:external-col-variables} to prove that for all $\BF{b} \in \gamma_3$,
\begin{equation*}
    p^{\gamma_1}_{A^{2,2}}(\BF{b}) = p_B(b_2)
\end{equation*}
where $B = \begin{bmatrix}
    1 & 1 & 2
\end{bmatrix}$, and then solved the corresponding coin exchange problem. Example~\ref{eg:multigraph} in Section \ref{sec:multigraph} provides a slightly more involved application of the determinant formula.

\subsection{Polynomiality}

In the previous section we showed that if $\gamma$ is an external chamber of $A$ whose external columns generate a saturated affine semi-group in $\lattice(A)$, then the quasi-polynomial $p_A^{\gamma}$ is equal to $p_B$ for a $1 \times k$ matrix $B$ with integer entries. Next, we exploit this this fact in order to characterize exactly when $p_A^{\gamma}$ is a polynomial. Moreover, we show that this polynomial is given by a negative binomial coefficient and is easy to compute, without explicitly computing the chamber $\gamma$. For a class of matrices (called \emph{unimodular} matrices), this result immediately allows us to prove that the polynomial $p_A^{\gamma}$ for an external chamber $\gamma$ is given by a negative binomial coefficient that is readily computable.

\begin{lem} \label{lem:coin-exchange}
Let $B = [b_{1,1}, \dots, b_{1,k}]$ be a $1 \times k$ integer matrix for some positive integer $k$, and assume that $\ker(B) \cap \mathbb{R}^k_{\geq 0} = \{\BF{0}\}$. Then $p_B$ is a polynomial of degree $k-1$ on $\semigroup{B}$ if and only if each of the $k$ entries of $B$ are equal to some non-zero integer $\beta$. In this case, 
\begin{equation*}
    p_B(b) =  \binom{\frac{b}{\beta} + k - 1}{k - 1}
\end{equation*}
for all $b \in \semigroup{B}$.
\end{lem}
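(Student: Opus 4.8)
The plan is to analyze the vector partition function of a $1 \times k$ matrix $B = [b_{1,1}, \dots, b_{1,k}]$ directly from its definition and from Sturmfels' theorem. First I would dispatch the easy implication: if all entries equal a common nonzero integer $\beta$, then $B\BF{x} = b$ has solutions in $\mathbb{N}^k$ precisely when $\beta \mid b$ and $b/\beta \ge 0$, in which case the number of solutions is the number of ways to write the non-negative integer $b/\beta$ as an ordered sum of $k$ non-negative integers, namely $\binom{b/\beta + k - 1}{k-1}$ by the standard stars-and-bars count. This is a genuine polynomial in $b/\beta$ of degree $k-1$ (on the semigroup $\semigroup{B} = \beta \mathbb{N}$, where we may substitute and regard it as a polynomial in $b$), which establishes the ``if'' direction together with the stated formula.

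For the ``only if'' direction I would argue contrapositively: suppose the entries of $B$ are not all equal to a common value. Note first that the hypothesis $\ker(B) \cap \mathbb{R}^k_{\ge 0} = \{\BF{0}\}$ forces all entries to be nonzero and of the same sign; after multiplying by $-1$ if necessary (which does not change $p_B$ up to the change of variables $b \mapsto -b$, and which is harmless since we only care about the form of the quasi-polynomial), we may assume all $b_{1,j} > 0$. So we are in the classical coin-exchange / restricted-partition setting with at least two distinct coin denominations. The key point is that the period of $p_B$ is then strictly greater than $1$: for instance, one can exhibit two residues of $b$ for which $p_B$ takes values differing from what a single polynomial would predict. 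Concretely, if $b_{1,i} \ne b_{1,j}$, then consider small values of $b$: $p_B(0) = 1$ but $p_B$ evaluated on any sufficiently small positive non-representable value is $0$, and more robustly, the Ehrhart-style generating function $\sum_{b \ge 0} p_B(b) x^b = \prod_{j=1}^k (1 - x^{b_{1,j}})^{-1}$ has a pole of order $k$ at $x = 1$ but also poles at other roots of unity precisely because the $b_{1,j}$ are not all equal (the least common multiple of the denominators exceeds $1$), and these extra poles force the quasi-polynomial to have nontrivial periodic coefficients, hence it is not a polynomial. I would make this precise by recalling that the coefficient quasi-polynomial extracted from $\prod (1-x^{b_{1,j}})^{-1}$ is a polynomial if and only if $x = 1$ is the only pole, equivalently $\gcd$-type conditions force all $b_{1,j}$ equal.

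The main obstacle is making the ``not all equal $\Rightarrow$ not a polynomial'' step fully rigorous rather than heuristic: the cleanest route is the partial-fraction decomposition of $\prod_{j=1}^{k}(1-x^{b_{1,j}})^{-1}$, where the contribution of the pole at a primitive $m$-th root of unity $\zeta$ (with $m > 1$) is nonzero exactly when $m$ divides at least one $b_{1,j}$, and such an $m > 1$ exists unless all $b_{1,j}$ are equal to a common value — because if the $b_{1,j}$ take two distinct positive values $u < v$, then any prime power $q$ dividing $v$ but to a strictly different exponent than it divides $u$ (such a $q$ must exist since $u \ne v$) gives a primitive $q$-th-root-of-unity pole whose residue does not cancel. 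The nonvanishing of that residue makes the periodic part of the quasi-polynomial genuinely nonconstant, so $p_B$ has period $> 1$ on $\semigroup{B}$ and is not a polynomial. Assembling these two directions, together with the explicit stars-and-bars formula in the equal case, completes the proof; I would also double-check the degree claim, which follows since the order of the pole at $x=1$ is exactly $k$, giving degree $k-1$ in both the polynomial and the general quasi-polynomial case, consistent with Sturmfels' theorem ($n - d = k - 1$ here).
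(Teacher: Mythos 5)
Your ``if'' direction and the stars-and-bars formula are fine and agree with the paper. The problem is in the ``only if'' direction. The criterion you propose to make it precise --- that the coefficient quasi-polynomial of $\prod_{j}(1-x^{b_{1,j}})^{-1}$ is a polynomial if and only if $x=1$ is the only pole --- is a criterion for polynomiality on all of $\mathbb{N}$, whereas the lemma asserts polynomiality on $\semigroup{B}$. As stated it even contradicts the lemma: for $B=[\beta,\dots,\beta]$ with $\beta>1$ the generating function has poles at every $\beta$-th root of unity, yet $p_B$ is a polynomial on $\semigroup{B}=\beta\mathbb{N}$ (and note ``$x=1$ is the only pole'' is equivalent to all $b_{1,j}=1$, not to all $b_{1,j}$ equal). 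A pole at a primitive $m$-th root of unity $\zeta$ obstructs polynomiality on $\semigroup{B}$ only when $\zeta^{g}\neq 1$, where $g=\gcd(b_{1,1},\dots,b_{1,k})$, since otherwise $b\mapsto\zeta^{-b}$ is constant on $g\mathbb{Z}\supseteq\semigroup{B}$. Your prime-power observation is the right ingredient to produce such a $\zeta$ (a prime power $q$ dividing $u$ and $v$ to different exponents satisfies $q\nmid g$), but you still owe an argument that the contributions of the several poles with $\zeta^{g}\neq 1$ do not cancel one another after restriction to $g\mathbb{Z}$: distinct roots of unity can restrict to the same character of $g\mathbb{Z}$, so ``the residue at one $\zeta$ is nonzero'' does not immediately give a nonconstant periodic part on $\semigroup{B}$. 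The standard fix is to first divide through by $g$ (replacing $b_{1,j}$ by $b_{1,j}/g$, so that polynomiality on $\semigroup{B}$ becomes eventual polynomiality on $\mathbb{N}$ with $\gcd=1$) and then invoke linear independence of the characters $b\mapsto\zeta^{-b}$ to force every pole away from $x=1$ to have vanishing polynomial part, hence all $b_{1,j}/g=1$. As written, this step is asserted rather than proved, so the forward implication has a genuine gap.

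For comparison, the paper avoids generating functions entirely: it uses the inclusion--exclusion identity $p_{B_{\cdot,\hat{\jmath}}}(b)=p_B(b)-p_B(b-b_{1,j})$ to deduce that every $1\times 2$ submatrix would also have a polynomial (degree~$1$) partition function, and then derives a contradiction for a two-coin matrix $[u,v]$ with $u\neq v$ by evaluating at $0$, at $\min(u,v)$, and at $uv$. If you want to keep the analytic route, it is workable, but the gcd reduction and the non-cancellation argument must be made explicit; otherwise the elementary reduction to the two-coin case is considerably shorter.
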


\begin{proof}
We begin by proving the reverse implication. Assume $B$ is a $1 \times k$ integer matrix with each of the $k$ entries equal to some non-zero integer $\beta$. Then for each $b \in \semigroup{B}$, $p_B(b)$ is the number of ways of partitioning $b/{\beta}$ into $k$ equal non-negative integral parts. Therefore, 
\begin{equation} \label{eq:bin-coeff}
p_B(b)  = \binom{\frac{b}{\beta} + k - 1}{k - 1}
\end{equation}
is a polynomial in $b$ for $b \in \semigroup{B}$. 

We now prove the forward implication. 
Suppose $p_B$ is a polynomial of degree $k-1$. We may assume that $k \geq 2$, since if $k=1$, $B$ has a single entry. We note further that the entries of $B$ must be either all positive or all negative or else $\ker(B) \cap \mathbb{R}^k_{\geq 0} \neq \{\BF{0}\}$. We assume that all entries are positive, noting that the negative case follows a similar argument. For all $1 \leq j \leq k$, the vector partition function $p_{B_{\cdot, \hat{j}}}$ is a polynomial of degree $k-2$ since it is the difference of two polynomials:
\begin{equation*}
p_{B_{\cdot, \hat{j}}}(b) = p_B(b) - p_B(b - b_{1,j}) 
\end{equation*}
for all $b \in \lattice(B) \cap \mathbb{N}$. In particular, by repeated application of this fact, it follows that for each $1 \times 2$ submatrix of $B$, the vector partition function is a polynomial of degree $1$. 

Assume towards a contradiction that $B$ has two distinct entries, say, without loss of generality, $b_{1,1}$ and $b_{1,2}$. Let $B' = [b_{1,1}, b_{1,2}]$ be the $1 \times 2$ submatrix consisting of the two distinct entries, so that $p_{B'}$ is a polynomial of degree $1$. 
Also, $p_{B'}(0) = p_B(\min(b_{1,1}, b_{1,2})) =1$, so $p_{B'}(b) = 1$ for all $b \in \mathbb{N}$ since $p_{B'}$ is linear. However, $p_{B'}(b_{1,1}b_{1,2}) \geq 2$ since both $\BF{x} = (b_{1,2}, 0)$ and $\BF{x} = (0, b_{1,1})$ are solutions to $B'\BF{x} = b_{1,1}b_{1,2}$ with $\BF{x} \in \mathbb{N}^2$. This contradicts that $p_{B'}$ is a polynomial of degree $1$, and thus that $p_B$ is a polynomial of degree $k-1$. Therefore, the entries of $B$ must be the same as required.

\end{proof}

 For a facet $f$ of a cone $\sigma \subset \mathbb{R}^m$, we call an inner/outer facet normal $\BFG{\iota} \in \mathbb{Z}^m$ of $f$ a \emph{minimal inner/outer facet normal} if $\BFG{\iota}$ is a minimal generator of the ray $\{t\BFG{\iota} : t \geq 0\}$. By Proposition \ref{prop:facet-normal-dual-ray}, $\BFG{\iota}$ is a ray generator of $\sigma^{\vee}$. Therefore, if $f$ is a facet of a simplicial chamber~$\gamma$ of $A$, then $\BFG{\iota}$ is a row of the dual ray matrix $M_{\gamma^{\vee}}$. This observation allows us to characterize exactly when $p_A^{\gamma}$ is a polynomial on $\semigroup{A}$ if $\gamma$ is an external chamber whose external columns generate an affine semigroup that is saturated in $\lattice(A)$.

 \begin{theo} \label{theo:external-chamber-binom}
 Let $\gamma$ be an external chamber of $A$, with external facet $f$, and let $\BFG{\iota}$ be the minimal inner facet normal of $f$. Assume without loss of generality that $\BF{a}_1, \dots, \BF{a}_{d-1}$ are the external columns of $\gamma$. Let $\BF{a}_{d+\ell}$ be a column of $A$ for some $\ell \in \{0, \dots, n-d\}$. Finally assume that $\semigroup{\BF{a}_1, \dots, \BF{a}_{d-1}}$ is saturated in~$\lattice(A)$.
     Then $p_A^{\gamma}$ is a polynomial on $\gamma \cap \semigroup{A}$ if and only if
     \begin{equation*}
         \BFG{\iota} \cdot \BF{a}_j = \begin{cases}
             0 \text{ if } \BF{a}_j \in f\\
             \beta \text{ if } \BF{a}_j \not \in f
         \end{cases}
     \end{equation*}
     for each $j = 1, \dots, n$, for some positive integer $\beta$. Moreover, if $p_A^{\gamma}$ is a polynomial on $\semigroup{A}$, then 
     \begin{align}
         p_A^{\gamma}(\BF{b}) &= \binom{\frac{\BFG{\iota} \cdot \BF{b}}{\beta} + n -d}{n-d} \label{eq:binom-dot-product} \\
         &= \binom{\frac{\det(\BF{a}_1, \dots, \BF{a}_{d-1}, \BF{b})}{\det(\BF{a}_1, \dots, \BF{a}_{d-1}, \BF{a}_{d+\ell})} + n -d}{n-d} \label{eq:binom-determinant}
     \end{align}
     for each $\BF{b} \in \gamma \cap \semigroup{A}$.
 \end{theo}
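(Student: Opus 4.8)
The plan is to reduce $p_A^{\gamma}$ to a one-variable coin-exchange problem via Theorem~\ref{theo:external-col-variables} and then apply Lemma~\ref{lem:coin-exchange}. Since $\gamma$ is an external chamber it is simplicial by Proposition~\ref{prop:chamber-simplicial}, so I may take $\sigma = \gamma$ in Theorem~\ref{theo:external-col-variables} with $\ell = d-1$ external columns; then $B$ is a $1 \times (n-d+1)$ integer matrix, $\gamma'$ is the nonnegative ray $\mathbb{R}_{\ge 0}$, and $p_A^{\gamma}(\BF{b}) = p_B\big((M_{\gamma^\vee}\BF{b})_d\big)$ for all $\BF{b} \in \gamma \cap \semigroup{A}$. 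The first task is to identify $B$ and the functional $(M_{\gamma^\vee}\BF{b})_d$ explicitly: ordering the rows of $M_{\gamma^\vee}$ so that rows $1,\dots,d-1$ correspond to the external ray generators and row $d$ to the internal ray generator, row $d$ is by construction the minimal inner facet normal of the sole facet of $\gamma$ not containing the internal ray generator, namely the external facet $f$; hence that row is exactly $\BFG{\iota}$, so $(M_{\gamma^\vee}\BF{b})_d = \BFG{\iota}\cdot\BF{b}$ and $B = [\,\BFG{\iota}\cdot\BF{a}_d,\ \dots,\ \BFG{\iota}\cdot\BF{a}_n\,]$.

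Next I would record two facts about the numbers $\BFG{\iota}\cdot\BF{a}_j$. Every external column lies on $f$, so $\BFG{\iota}\cdot\BF{a}_j = 0$ for $j = 1,\dots,d-1$ (this is the first case of the claimed formula, and it holds unconditionally). By Proposition~\ref{prop:construct-external}, $f$ is a facet of $\pos(A)$ whose only columns are $\BF{a}_1,\dots,\BF{a}_{d-1}$, and since $\BFG{\iota}$ is an inner normal of $f$, the remaining columns satisfy $\BFG{\iota}\cdot\BF{a}_j > 0$ for $j = d,\dots,n$. Thus every entry of $B$ is a positive integer, so $\ker(B)\cap\mathbb{R}^{n-d+1}_{\ge 0} = \{\BF{0}\}$ and Lemma~\ref{lem:coin-exchange} applies with $k = n-d+1$; note also that a one-row matrix with positive entries always has $\deg p_B = k-1$, so ``$p_B$ is a polynomial'' forces degree $n-d$ and the hypothesis of Lemma~\ref{lem:coin-exchange} is automatically the right one. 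The lemma then says $p_B$ is a polynomial on $\semigroup{B}$ iff $\BFG{\iota}\cdot\BF{a}_d = \dots = \BFG{\iota}\cdot\BF{a}_n = \beta$ for a common positive integer $\beta$, in which case $p_B(b) = \binom{b/\beta + n - d}{n-d}$; substituting $b = \BFG{\iota}\cdot\BF{b}$ gives Eq.~\eqref{eq:binom-dot-product}, and together with the unconditional vanishing on $f$ this is exactly the stated case distinction.

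To convert between ``$p_A^{\gamma}$ is a polynomial on $\gamma\cap\semigroup{A}$'' and ``$p_B$ is a polynomial on $\semigroup{B}$'' I would argue that the reduction map $\BF{b}\mapsto \BFG{\iota}\cdot\BF{b}$ carries $\gamma\cap\semigroup{A}$ onto $\gamma'\cap\semigroup{B}$; this surjectivity is part of what is established in the proof of Lemma~\ref{lem:dim-reduction} (via the change of variables used there), so a polynomial identity valid on $\semigroup{B}$ pulls back to one valid on $\gamma\cap\semigroup{A}$, and conversely a polynomial identity on $\gamma\cap\semigroup{A}$ pushes forward because it holds on a set mapping onto $\semigroup{B}$. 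I expect this transfer of the polynomiality property --- specifically, verifying that $\gamma\cap\semigroup{A}$ hits enough of $\semigroup{B}$ to detect the period of $p_B$ --- to be the only delicate point; everything else is substitution.

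Finally, for Eq.~\eqref{eq:binom-determinant}: $\BFG{\iota}$ spans the one-dimensional space of linear functionals vanishing on $\BF{a}_1,\dots,\BF{a}_{d-1}$, so $\det(\BF{a}_1,\dots,\BF{a}_{d-1},\BF{x})$ is a fixed nonzero scalar multiple of $\BFG{\iota}\cdot\BF{x}$. Since $\BF{a}_{d+\ell}\notin f$ we have $\BFG{\iota}\cdot\BF{a}_{d+\ell} = \beta \ne 0$, so $\BF{a}_1,\dots,\BF{a}_{d-1},\BF{a}_{d+\ell}$ are linearly independent and the denominator determinant is nonzero; taking the ratio cancels the scalar, giving $\det(\BF{a}_1,\dots,\BF{a}_{d-1},\BF{b})/\det(\BF{a}_1,\dots,\BF{a}_{d-1},\BF{a}_{d+\ell}) = (\BFG{\iota}\cdot\BF{b})/\beta$, and substituting into Eq.~\eqref{eq:binom-dot-product} yields Eq.~\eqref{eq:binom-determinant}.
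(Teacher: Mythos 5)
Your proposal is correct and follows essentially the same route as the paper's proof: reduce to a one-row coin-exchange matrix via Theorem~\ref{theo:external-col-variables}, identify the last row of $M_{\gamma^{\vee}}$ with $\BFG{\iota}$ so that the entries of $B$ are the dot products $\BFG{\iota}\cdot\BF{a}_j$, apply Lemma~\ref{lem:coin-exchange}, and recover the determinant form by cancelling the proportionality constant between $\det(\BF{a}_1,\dots,\BF{a}_{d-1},\cdot)$ and $\BFG{\iota}\cdot(\cdot)$ (the paper does this via Cramer's rule). You are in fact slightly more careful than the paper on the transfer of polynomiality between $p_A^{\gamma}$ and $p_B$, which the paper asserts without comment.
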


 \begin{proof}
     We have $\BFG{\iota} \cdot \BF{a}_j = 0$ for each $j= 1, \dots, d-1$.
     Let $M := M_{\gamma^{\vee}}$ be the dual ray matrix of $\gamma$ so that the first $d-1$ rows appear in the same order as the corresponding $d-1$ external columns of $\gamma$. 
     By Theorem \ref{theo:external-col-variables}, for each $\BF{b} \in \gamma$,
     \begin{equation*}
     p_A^{\gamma}(\BF{b}) = p_B((M\BF{b})_d)
     \end{equation*}
     where $B$ is the $1 \times (n-d +1)$ matrix obtained by removing the first $d-1$ rows and columns from~$MA$. Additionally, the last row of $M$ is simply $\BFG{\iota}$ since $\BFG{\iota}$ is the only minimal ray generator of $\gamma^{\vee}$ not corresponding to a column in $f$. Therefore, the last row of $MA$ is $\BFG{\iota}^TA$, and so we have
     \begin{align*}
         B_{1,j} &= (MA)_{d, d-1 + j} \\
         &= \BFG{\iota} \cdot \BF{a}_{d-1+j}
     \end{align*}
     for each $j = 1, \dots, n-d+1$.
     By Lemma \ref{lem:coin-exchange}, $p_B$ is polynomial on $\semigroup{B}$ if and only if each of these entries is equal to some positive integer $\beta$. Since $p_A^{\gamma}(\BF{b}) = p_B((M\BF{b})_d)$ for all $\BF{b} \in \gamma \cap \semigroup{A}$, $p_A^{\gamma}$ is polynomial if and only if $p_B$ is polynomial. 
     
     We now prove that Eq.~\eqref{eq:binom-dot-product} and Eq.~\eqref{eq:binom-determinant} hold if $p_A^{\gamma}$ is polynomial. In this case, for all $\BF{b} \in \gamma \cap \semigroup{A}$, 
     \begin{align*}
         p_A^{\gamma}(\BF{b}) &= p_B((M\BF{b})_d) \\
         &= p_B(\BFG{\iota} \cdot \BF{b}) \\
         &= \binom{\frac{\BFG{\iota} \cdot \BF{b}}{\beta} + n -d}{n-d}
     \end{align*}
     and so Eq.~\eqref{eq:binom-dot-product} holds. Since $\BF{b} \in \gamma$, by Proposition \ref{prop:construct-external}, $\BF{b}$ is in the simplicial cone of $A$, $\pos(A_s)$, where $s~=~\{1, \dots, d-1, d+\ell\}$. Therefore, $\BF{b} =  \lambda_1\BF{a}_1 + \dots + \lambda_{d-1}\BF{a}_{d-1} + \lambda_d\BF{a}_{d+\ell}$ for some $\lambda_1, \dots, \lambda_{d} \geq 0$. Then
     \begin{align}
         \BFG{\iota} \cdot \BF{b} &= \lambda_d(\BFG{\iota} \cdot \BF{a}_{d+\ell})  \\
         &= \lambda_d \\
         &= \frac{\det(\BF{a}_1, \dots, \BF{a}_{d-1}, \BF{b})}{\det(\BF{a}_1, \dots, \BF{a}_{d-1}, \BF{a}_{d+\ell})} \label{eq:dot-product-determinant}
     \end{align}
     where the last equality follows from Cramer's rule.
     Eq.~\eqref{eq:binom-determinant} now follows by plugging in Eq.~\eqref{eq:dot-product-determinant} into Eq.~\eqref{eq:binom-dot-product}. 
 \end{proof}

 We note also that in the previous result the columns $\BF{a}_1, \dots, \BF{a}_{d-1}$ can be replaced by any ray generators $\BF{v}_1, \dots, \BF{v}_{d-1}$ with $\pos(\BF{v}_i) = \pos(\BF{a}_i)$ for $i=1,\dots,d-1$. 

 \begin{rem}
    If a column of $A$ is an internal ray generator $\BF{v}$ of $\gamma$ (equivalently some column of $A$ is in $\gamma$ but is not an external column of $\gamma$), then 
    \begin{equation*}
    p_A(\BF{v}) = n- d + 1.
    \end{equation*}
    This is exactly the number of simplicial cones of $A$ that contain $\gamma$ as a subset since there are $n-(d-1)$ choices of $d^{th}$ column to add to the $d-1$ columns on the external facet. For each such simplicial cone $\pos(A_s)$ (i.e with $\gamma \subseteq \pos(A_s)$), there is exactly one solution $\BF{x} \in \mathbb{N}^n$ to $A\BF{x} = \BF{v}$ with $x_i = 0$ for all $i \notin s$ (since the columns of $A$ in $s$ form a basis of $\mathbb{R}^d$ and $\BF{v} \in \pos(A_s)$). Therefore, we find that each solution to $A\BF{x} = \BF{v}$ is of this form, and that no other solutions $\BF{x} \in \mathbb{N}^n$ exist.
\end{rem}
 
We suspect that the saturation condition in the previous theorem can be removed (intuitively we view saturation as ``nice'' from the periodic point of view, so we expect that removing this property on the external columns introduces periodicity).

\subsection{Unimodularity} \label{subsec:unimodularity}

We now consider the case of unimodular matrices -- these matrices have the special property that $p_A$ is a piecewise polynomial, so $p_A^{\gamma}$ is polynomial for each chamber.

A full rank $d \times n$ matrix $A$ with integer entries is \emph{unimodular} if every  $d \times d$ submatrix of $A$ has determinant $1$, $-1$, or $0$ (see for example \cite[Section 19.1]{Schr86}).  If $A$ is unimodular, then $\semigroup{A} = \pos(A) \cap \mathbb{Z}^d$.

In \cite{DeSt03}, De Loera and Sturmfels introduce a generalization of matrix unimodularity given by a geometrical criterion. Both of these definitions appear in this section, so we distinguish them by refering to the older definition simply as unimodular and the one introduced by De Loera and Sturmfels as DeLS-unimodular. A $d \times n$ matrix $A$ with integer entries is defined to be \emph{DeLS-unimodular} if the polyhedron $\{\BF{x} \in \mathbb{R}^d : A\BF{x} = \BF{b}, \BF{x} \geq \BF{0}\}$ associated to the vector partition function $p_A(\BF{b})$ has only integral vertices whenever $\BF{b}$ is in the lattice  spanned by the columns of $A$. Under these conditions, $p_A$ is piecewise polynomial by the following result of De Loera and Sturmfels. We remark that unimodular matrices are DeLS-unimodular. 

\begin{theo}[De Loera, Sturmfels 2003 \cite{DeSt03}] \label{theo:unimodular}
Let $A$ be a $d \times n$ DeLS-unimodular matrix of rank $d$. Then $p_A$ is a piecewise polynomial of degree $n-d$ on $\lattice(A) \cap \pos(A)$ and is zero everywhere else on $\mathbb{Z}^d\cap \pos(A)$. 
\end{theo}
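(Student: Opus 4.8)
\emph{Proof plan.} The statement has two parts, one of which is essentially free. If $\BF{b}\in\pos(A)\cap\mathbb{Z}^{d}$ and $p_{A}(\BF{b})>0$, then some $\BF{x}\in\mathbb{N}^{n}$ satisfies $A\BF{x}=\BF{b}$, whence $\BF{b}=\sum_{i}x_{i}\BF{a}_{i}\in\semigroup{A}\subseteq\lattice(A)$; contrapositively $p_{A}$ vanishes on $(\pos(A)\cap\mathbb{Z}^{d})\setminus\lattice(A)$. It remains to show that for every chamber $\gamma$ of $A$ the quasi-polynomial $p_{A}^{\gamma}$ produced by Sturmfels' theorem actually agrees with a single polynomial of degree $n-d$ on $\gamma\cap\lattice(A)$.

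The substantive input is classical Ehrhart theory together with the defining property of DeLS-unimodularity. For $\BF{b}\in\pos(A)$ write $P_{\BF{b}}=\{\BF{x}\in\mathbb{R}^{n}:A\BF{x}=\BF{b},\ \BF{x}\ge\BF{0}\}$, a polytope by the standing assumption $\ker(A)\cap\mathbb{R}^{n}_{\ge 0}=\{\BF{0}\}$, so that $p_{A}(\BF{b})=\#\bigl(P_{\BF{b}}\cap\mathbb{Z}^{n}\bigr)$ and $P_{t\BF{b}}=tP_{\BF{b}}$ for $t>0$. If $\BF{b}\in\lattice(A)$ then $P_{\BF{b}}$ has integral vertices by DeLS-unimodularity, i.e.\ it is a lattice polytope, so by Ehrhart's theorem $t\mapsto p_{A}(t\BF{b})=\#(tP_{\BF{b}}\cap\mathbb{Z}^{n})$ is a genuine polynomial in $t\in\mathbb{Z}_{\ge 0}$, not merely a quasi-polynomial. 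If in addition $\BF{b}\in\gamma^{\circ}$ then $\BF{b}\in\operatorname{relint}\pos(A)$, so $P_{\BF{b}}$ contains a point with all coordinates positive and $\dim P_{\BF{b}}=n-d$; thus the polynomial above has degree exactly $n-d$.

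It then remains to bootstrap from dilation rays to the full chamber. Let $N$ be the period of $p_{A}^{\gamma}$ and $g$ the polynomial with which $p_{A}^{\gamma}$ coincides on the residue class $N\mathbb{Z}^{d}$; since $\lattice(A)\subseteq\mathbb{Z}^{d}$ we have $N\lattice(A)\subseteq N\mathbb{Z}^{d}$. Fix $\BF{b}\in\gamma^{\circ}\cap\lattice(A)$, a nonempty set because $\lattice(A)$ has full rank and $\gamma^{\circ}$ is a nonempty open cone. For $t\in\mathbb{Z}_{\ge 1}$ we have $t\BF{b}\in\gamma^{\circ}\cap\lattice(A)$, so $p_{A}(t\BF{b})=p_{A}^{\gamma}(t\BF{b})$, which by the previous paragraph is a polynomial $\phi(t)$ in $t$; and for $t\in N\mathbb{Z}_{\ge 1}$ also $t\BF{b}\in N\lattice(A)$, so $\phi(t)=g(t\BF{b})$. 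Two polynomials in $t$ agreeing on the infinite progression $N\mathbb{Z}_{\ge 1}$ are equal, so $\phi(t)=g(t\BF{b})$ for all $t$; at $t=1$ this reads $p_{A}^{\gamma}(\BF{b})=g(\BF{b})$. For an arbitrary $\BF{b}\in\gamma\cap\lattice(A)$, fix $\BF{b}_{0}\in\gamma^{\circ}\cap\lattice(A)$; then $\BF{b}+t\BF{b}_{0}\in\gamma^{\circ}\cap\lattice(A)$ for all $t\in\mathbb{Z}_{\ge 1}$ (a convex cone plus its interior lies in its interior), so $p_{A}^{\gamma}(\BF{b}+t\BF{b}_{0})=g(\BF{b}+t\BF{b}_{0})$ for $t\ge 1$; the left side is a quasi-polynomial in $t$, the right a polynomial, and a quasi-polynomial agreeing with a polynomial for all large $t$ equals it, so they agree at $t=0$: $p_{A}^{\gamma}(\BF{b})=g(\BF{b})$. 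Hence $p_{A}^{\gamma}$ coincides on $\gamma\cap\lattice(A)$ with the polynomial $g$, whose degree is $n-d$ because $g(t\BF{b})=\phi(t)$ has degree $n-d$ for $\BF{b}\in\gamma^{\circ}\cap\lattice(A)$ while $\deg p_{A}^{\gamma}\le n-d$ by Sturmfels. As the finitely many chambers cover $\pos(A)$, this shows $p_{A}$ is piecewise polynomial of degree $n-d$ on $\lattice(A)\cap\pos(A)$.

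The one genuinely delicate point is this last bootstrapping step: Ehrhart's theorem only directly yields polynomiality of $p_{A}$ along dilation directions $t\mapsto t\BF{b}$, and one must exploit the built-in periodicity of $p_{A}^{\gamma}$ — which already pins it to the single polynomial $g$ on the coset $N\lattice(A)$ — in order to propagate polynomiality to every lattice point of $\gamma$; boundary points of $\gamma$, where $P_{\BF{b}}$ can drop dimension and the Ehrhart input is weaker, are reached by translating into $\gamma^{\circ}$ along a fixed interior lattice vector. The remaining ingredients — that a quasi-polynomial coinciding with a polynomial along an infinite arithmetic progression must equal it, and the elementary facts about cones used above — are routine.
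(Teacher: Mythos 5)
The paper does not prove this statement; it is imported verbatim as a theorem of De Loera and Sturmfels, so there is no in-paper argument to compare against. Your reconstruction is correct and self-contained. The vanishing of $p_A$ off $\lattice(A)$ is indeed immediate from $\semigroup{A}\subseteq\lattice(A)$. The substantive step — DeLS-unimodularity makes every fiber $P_{\BF{b}}$ with $\BF{b}\in\lattice(A)$ a lattice polytope, so Ehrhart's theorem (in its version for not-necessarily-full-dimensional lattice polytopes) gives genuine polynomiality of $t\mapsto p_A(t\BF{b})$ of degree $\dim P_{\BF{b}}=n-d$ for $\BF{b}\in\gamma^{\circ}$ — is the same mechanism underlying the original De Loera–Sturmfels proof, though they package it through residue/Brion–Vergne formulas rather than through your periodicity bootstrap. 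That bootstrap is the one place where care is needed and you handle it correctly: pinning $p_A^{\gamma}$ to a single polynomial $g$ on the coset $N\mathbb{Z}^d$, transporting the Ehrhart polynomiality along dilation rays to conclude $p_A^{\gamma}=g$ on $\gamma^{\circ}\cap\lattice(A)$, and reaching boundary lattice points by translating along a fixed interior lattice direction, using that a quasi-polynomial agreeing with a polynomial on an infinite arithmetic progression agrees with it everywhere. The only implicit convention you rely on — that the chamber quasi-polynomial $p_A^{\gamma}$ represents $p_A$ on the \emph{closed} chamber, so that its values at boundary lattice points are the actual partition numbers — is exactly the convention adopted in Section \ref{subsec:vpf}, so the argument is sound as written.
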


If $A$ is DeLS-unimodular, then each subset of columns of $A$ generates an affine semigroup that is saturated in $\mathcal{L}(A)$. If $A$ is unimodular then it is also DeLS-unimodular, and therefore if $A$ is a unimodular matrix, then the same holds true. The following corollary now follows immediately.

\begin{cor} \label{cor:det-unimodular}
Let $A$ be a $d \times n$ DeLS-unimodular matrix of rank $d$, and $\gamma$ be an external chamber of $A$ with external columns $\BF{a}_1, \BF{a}_2, \dots, \BF{a}_{d-1}$. Let $\BF{a}_{d+\ell}$ be a column of $A$ for some $\ell~\in~\{0, \dots, n-d\}$. Then
\begin{equation} \label{eq:det-unimodular}
p_A^{\gamma}(\BF{b}) = \binom{
\frac{\det(\BF{a}_1, \dots, \BF{a}_{d-1}, \BF{b})}{\det(\BF{a}_1, \dots, \BF{a}_{d-1}, \BF{a}_{d+\ell})} + n-d}{n-d}
\end{equation}
for all $\BF{b} \in \semigroup{A} \cap \gamma$.
\end{cor}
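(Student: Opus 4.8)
The plan is to deduce Corollary \ref{cor:det-unimodular} directly from Theorem \ref{theo:external-chamber-binom}, using the hypothesis of DeLS-unimodularity only to verify the two ingredients that Theorem \ref{theo:external-chamber-binom} requires: the saturation condition on the external columns, and the hypothesis that $\BFG{\iota}\cdot\BF{a}_j$ takes only the two values $0$ (on $f$) and a single positive integer $\beta$ (off $f$). Once those are in place, Eq.~\eqref{eq:binom-determinant} is exactly Eq.~\eqref{eq:det-unimodular}, so there is essentially nothing left to prove beyond the verification.

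First I would record the saturation input. As noted in the paragraph preceding the corollary, if $A$ is DeLS-unimodular then every subset of its columns generates an affine semigroup that is saturated in $\lattice(A)$; in particular $\semigroup{\BF{a}_1,\dots,\BF{a}_{d-1}}$ is saturated in $\lattice(A)$, so the last hypothesis of Theorem \ref{theo:external-chamber-binom} holds. This lets us invoke that theorem and conclude that $p_A^{\gamma}$ is a polynomial on $\gamma\cap\semigroup{A}$ \emph{provided} the dot-product condition on $\BFG{\iota}$ holds, and that in that case the formula \eqref{eq:binom-determinant} is valid.

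Next I would argue that the dot-product condition is automatic for DeLS-unimodular $A$. Here is the key point: by Theorem \ref{theo:unimodular}, $p_A$ is piecewise \emph{polynomial}, so in particular $p_A^{\gamma}$ is a polynomial (not merely a quasi-polynomial) on $\gamma\cap\semigroup{A}$. Running through the proof of Theorem \ref{theo:external-chamber-binom}, we have $p_A^{\gamma}(\BF{b}) = p_B((M_{\gamma^{\vee}}\BF{b})_d) = p_B(\BFG{\iota}\cdot\BF{b})$ where $B$ is the $1\times(n-d+1)$ matrix whose entries are $\BFG{\iota}\cdot\BF{a}_{d-1+j}$ for $j = 1,\dots,n-d+1$, i.e.\ the $\BFG{\iota}\cdot\BF{a}_j$ for the $n-d+1$ columns $\BF{a}_j\notin f$. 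Since $p_A^{\gamma}$ is a genuine polynomial of degree $n-d$ (Sturmfels' theorem gives degree $n-d$; or one observes that $\BFG{\iota}\cdot\BF{b}$ ranges over an arithmetic progression as $\BF{b}$ varies and the composite is polynomial of that degree) and since $\BFG{\iota}\cdot\BF{b}$ takes all sufficiently large values of the appropriate residue class, $p_B$ itself must be a polynomial of degree $n-d = k-1$ on $\semigroup{B}$. Now Lemma \ref{lem:coin-exchange} applies in the reverse direction: $p_B$ polynomial of degree $k-1$ forces all $k=n-d+1$ entries of $B$ to equal a common nonzero integer $\beta$; and since $A$ satisfies $\ker(A)\cap\mathbb{R}^n_{\ge 0}=\{\BF 0\}$ and $f$ is a facet of $\pos(A)$ with inner normal $\BFG{\iota}$, we have $\BFG{\iota}\cdot\BF{a}_j\ge 0$ for all $j$, so $\beta>0$. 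This establishes exactly the dot-product condition in Theorem \ref{theo:external-chamber-binom}, whence \eqref{eq:binom-determinant} holds, and it coincides with \eqref{eq:det-unimodular}.

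The main obstacle, such as it is, is the bookkeeping in the middle step: making sure that ``$p_A^{\gamma}$ is a polynomial'' (from DeLS-unimodularity via Theorem \ref{theo:unimodular}) transfers cleanly to ``$p_B$ is a polynomial of degree $k-1$ on $\semigroup{B}$,'' so that the converse direction of Lemma \ref{lem:coin-exchange} can be applied. The degree count is the delicate part: one must check that the degree $n-d$ of $p_A^{\gamma}$ forces $p_B$ to have degree $n-d$ rather than something smaller, which follows since $\BF{b}\mapsto \BFG{\iota}\cdot\BF{b}$ is a nonconstant affine functional on $\gamma$ (it is $0$ on the facet $f$ and positive on the internal ray) so the composition cannot drop degree. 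Alternatively, one can sidestep this entirely by noting that the saturation condition already lets us apply Theorem \ref{theo:external-chamber-binom}, and DeLS-unimodularity (Theorem \ref{theo:unimodular}) independently guarantees $p_A^{\gamma}$ is polynomial, so the ``only if'' half of Theorem \ref{theo:external-chamber-binom} immediately yields the dot-product condition and hence formula \eqref{eq:binom-determinant}. I would present this second, shorter route as the proof, relegating the degree discussion to a parenthetical remark.
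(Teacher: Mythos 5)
Your proposal is correct and follows essentially the same route the paper intends: the paper gives no explicit proof beyond noting that DeLS-unimodularity yields the saturation hypothesis and (via Theorem \ref{theo:unimodular}) the polynomiality of $p_A^{\gamma}$, after which the ``only if'' direction and the ``moreover'' clause of Theorem \ref{theo:external-chamber-binom} deliver Eq.~\eqref{eq:binom-determinant}, which is Eq.~\eqref{eq:det-unimodular}. Your second, shorter route is exactly this argument, and your decision to relegate the degree bookkeeping of the first route to a remark is the right call.
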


\begin{eg}
    Consider the following DeLS-unimodular matrix 
    \[
    D = \begin{bmatrix}
    2 & 0 & 0 & 2 & 2 \\
    0 & 2 & 0 & 2 & 0 \\
    0 & 0 & 2 & 0 & 2
    \end{bmatrix}.
    \]
    that we have obtained by multiplying the matrix in the running example of \cite{DeSt03} by two. This multiplication has no effect on the chamber complex (i.e the chamber complex of $D$ is the same as that in their running example). However, 
    \begin{equation*}
    \lattice(D) := \{(b_1, b_2, b_3) \in \mathbb{Z}^3 : b_1, b_2, b_3 \equiv 0 \mod 2\}
    \end{equation*}
    in our example, whereas in their running example, the lattice spanned by the matrix is $\mathbb{Z}^3$.
    
    The first three columns are external and the other two are not. 
    Additionally, the chamber
    \[
    \gamma := \pos\left(\begin{bmatrix}
        0 \\
        0 \\
        1
    \end{bmatrix},
    \begin{bmatrix}
        0 \\
        1 \\
        0
    \end{bmatrix},
    \begin{bmatrix}
        1 \\
        1 \\
        1
    \end{bmatrix}\right)
    \]
    is external with minimal internal ray generator 
    \[
    \begin{bmatrix}
        1 \\
        1 \\
        1
    \end{bmatrix}.
    \]
    We now compute the polynomial associated to $\gamma$ using Corollarly \ref{cor:det-unimodular} with the first column of $D$ playing the role of $\BF{a}_{d+\ell}$. Let $d$ denote the ratio of determinants -- that is,
    \begin{equation*}
    d := \det\left(\begin{bmatrix}
        0 & 0 & b_1 \\
        0 & 2 & b_2 \\
        2 & 0 & b_3
    \end{bmatrix}\right)\Bigg/
    \det\left(\begin{bmatrix}
        0 & 0 & 2 \\
        0 & 2 & 0 \\
        2 & 0 & 0
    \end{bmatrix}\right)
    = \frac{b_1}{2}.
    \end{equation*}
    Then
    \begin{align*}
    p_D^{\gamma}(\BF{b}) &= \binom{
    d
    +2}{2} \\
    &= \binom{\frac{b_1}{2} + 2}{2}
    \end{align*}
    for all $\BF{b} \in \semigroup{D} \cap \gamma$. Since $b_1 \equiv 0 \mod 2$, the resulting polynomial does indeed yield integers. 
    Finally, remark that we could have also used the fourth or fifth columns of $D$ in the place of the first column (only the external columns of $\gamma$ cannot be used). 
\end{eg}

In the case that $A$ is unimodular (not just DeLS-unimodular), we can further simplify the expression given in Theorem \ref{theo:unimodular}. We begin with the following useful lemma.

\begin{lem} \label{lem:column-dot-product}
    Let $A$ be a $d \times n$ unimodular matrix of rank $d$. Let $f$ be a facet of $\pos(A)$ with minimal inner facet normal $\BFG{\iota}$. Let $\BF{c}$ be a column of $A$. Then 
    \begin{equation*}
        \BFG{\iota} \cdot \BF{c} = \begin{cases}
            0 \text{ if } \BF{c} \in f \\
            1 \text{ if } \BF{c} \notin f.
        \end{cases}
    \end{equation*}
\end{lem}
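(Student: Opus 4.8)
The plan is to exploit two standard facts about unimodular matrices: first, that $A$ being unimodular forces every facet of $\pos(A)$ to be spanned by columns of $A$ that, together with a suitable column off the facet, form a unimodular basis of $\mathbb{Z}^d$; and second, that a minimal inner facet normal of such a facet can be written down explicitly from that basis. So first I would fix the facet $f$ and pick $d-1$ columns of $A$, say $\BF{a}_{i_1}, \dots, \BF{a}_{i_{d-1}}$, that span $f$ (equivalently span the supporting hyperplane of $f$). Since $A$ has rank $d$, there is some column $\BF{a}_{i_d}$ of $A$ not lying on the supporting hyperplane of $f$, so $\{\BF{a}_{i_1}, \dots, \BF{a}_{i_d}\}$ is a $\mathbb{Z}$-basis candidate; because $A$ is unimodular the determinant of this $d \times d$ submatrix is $\pm 1$, so it is in fact a $\mathbb{Z}$-basis of $\mathbb{Z}^d$, and after possibly negating we may assume the determinant is $+1$ and that $\BF{a}_{i_d}$ lies on the same side of $f$ as $\pos(A)$.

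Next I would produce the inner facet normal concretely. Let $\BFG{\iota}$ be the last row of the inverse of the matrix $[\BF{a}_{i_1}, \dots, \BF{a}_{i_d}]$ (with columns ordered so that $\BF{a}_{i_d}$ is last); equivalently, by Cramer's rule, $\BFG{\iota} \cdot \BF{b} = \det(\BF{a}_{i_1}, \dots, \BF{a}_{i_{d-1}}, \BF{b}) / \det(\BF{a}_{i_1}, \dots, \BF{a}_{i_d}) = \det(\BF{a}_{i_1}, \dots, \BF{a}_{i_{d-1}}, \BF{b})$. This $\BFG{\iota}$ has integer entries (its coordinates are $(d-1) \times (d-1)$ minors of an integer matrix), it vanishes exactly on the span of $\BF{a}_{i_1}, \dots, \BF{a}_{i_{d-1}}$, hence on $f$, and it is positive on $\BF{a}_{i_d}$, hence nonnegative on all of $\pos(A)$; so $\BFG{\iota}$ is an inner facet normal of $f$. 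I would then argue it is the \emph{minimal} one: if $\BFG{\iota} = m \BFG{\iota}'$ for an integer vector $\BFG{\iota}'$ and integer $m \geq 2$, then $\BFG{\iota}' \cdot \BF{a}_{i_d} = 1/m \notin \mathbb{Z}$, a contradiction. Hence $\BFG{\iota}$ as constructed is the minimal inner facet normal, and since the minimal inner facet normal is unique (it generates the ray of $\pos(A)^\vee$ dual to $f$ by Proposition \ref{prop:facet-normal-dual-ray}), we may use this $\BFG{\iota}$.

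Finally, for an arbitrary column $\BF{c}$ of $A$: if $\BF{c} \in f$ then $\BFG{\iota} \cdot \BF{c} = 0$ by construction, giving the first case. If $\BF{c} \notin f$, then $\BFG{\iota} \cdot \BF{c} = \det(\BF{a}_{i_1}, \dots, \BF{a}_{i_{d-1}}, \BF{c})$ is a $d \times d$ subdeterminant of $A$, hence lies in $\{1, -1, 0\}$ by unimodularity; it is nonzero because $\BF{c}$ is off the hyperplane spanned by $\BF{a}_{i_1}, \dots, \BF{a}_{i_{d-1}}$, and it is positive because $\BFG{\iota}$ is an inner normal and $\BF{c} \in \pos(A)$ lies on the inner side of $f$ (being a column of $A$, it is not separated from $\pos(A)$ by the supporting hyperplane of $f$). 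Therefore $\BFG{\iota} \cdot \BF{c} = 1$, which is the second case. The main obstacle I anticipate is bookkeeping the sign conventions — making sure the chosen orientation of the submatrix $[\BF{a}_{i_1}, \dots, \BF{a}_{i_d}]$ is compatible with $\BFG{\iota}$ being an \emph{inner} (not outer) normal — but this is just a matter of negating $\BFG{\iota}$ if necessary, and the unimodularity of $A$ is precisely what forces the off-facet values to be exactly $1$ rather than some larger integer.
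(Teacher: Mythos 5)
Your proof is correct, and it takes a more explicit route than the paper's. The paper also reduces to the unimodular $d\times d$ submatrix $[\BF{a}_{i_1},\dots,\BF{a}_{i_{d-1}},\BF{c}]$, but it then argues indirectly: invertibility over $\mathbb{Z}$ gives $\gcd(c_1,\dots,c_d)=1$, and this is combined with the primitivity of $\BFG{\iota}$ to conclude $\BFG{\iota}\cdot\BF{c}=1$. You instead construct the minimal inner normal concretely as the relevant row of the inverse of a unimodular submatrix, verify minimality via $\BFG{\iota}\cdot\BF{a}_{i_d}=1$, and then evaluate $\BFG{\iota}\cdot\BF{c}=\det(\BF{a}_{i_1},\dots,\BF{a}_{i_{d-1}},\BF{c})$ as a $d\times d$ minor of $A$, which unimodularity pins to $\{0,\pm1\}$ and positivity (from $\BF{c}\in\pos(A)\setminus H$) pins to $1$. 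This determinantal evaluation is the stronger step: two primitive integer vectors can certainly have dot product greater than $1$ (e.g.\ $(2,1)\cdot(1,1)=3$), so the gcd observations alone do not force the conclusion, whereas your identification of the pairing with a subdeterminant of $A$ does. Your version also yields, as a byproduct, the explicit Cramer's-rule description of $\BFG{\iota}$ that the paper uses elsewhere (e.g.\ in Theorem \ref{theo:external-chamber-binom}). The only loose ends are cosmetic: the sign normalization you flag is handled automatically by taking $\BFG{\iota}$ to be the appropriate row of the inverse matrix, and the extension of the identity $\BFG{\iota}\cdot\BF{b}=\det(\BF{a}_{i_1},\dots,\BF{a}_{i_{d-1}},\BF{b})$ from the chosen basis to all $\BF{b}$ is immediate by linearity.
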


\begin{proof}
    If $\BF{c} \in f$ then $\BFG{\iota} \cdot \BF{c} = 0$, so assume that $\BF{c} \notin f$. In this case, there are linearly independent columns $\BF{a}_1, \dots, \BF{a}_{d-1}$ of $A$ lying on $f$ so that $\pos(\BF{a}_1, \dots, \BF{a}_{d-1}, \BF{c})$ is a simplicial cone of $A$. Since $A$ is unimodular, the matrix $M$ whose columns are $\BF{a}_1, \dots, \BF{a}_{d-1}, \BF{c}$ must have determinant $\pm 1$. Thus $M$ is invertible over $\mathbb{Z}$, and in particular, there exists a vector $\BF{v}$ in $\mathbb{Z}^d$ such that $\BF{c} \cdot \BF{v} = 1$. Since $\BF{c}, \BF{v} \in \mathbb{Z}^d$ with $\BF{c} \cdot \BF{v} = 1$, we see that $\operatorname{gcd}(c_1, \dots, c_d) = 1$. Finally since $\BFG{\iota} \in \mathbb{Z}^d$ as well, $\BFG{\iota} \cdot \BF{c}$ is integral, and since $\BFG{\iota}$ is minimal, $\operatorname{gcd}(\iota_1, \dots, \iota_d) = 1$, and so $\BFG{\iota} \cdot \BF{c} = 1$ as required.
\end{proof}

\begin{cor} \label{cor:unimodular}
Let $A$ be a $d \times n$ unimodular matrix of rank $d$, $f$ be a facet of $A$ containing exactly $d-1$ columns of $A$, and $\BFG{\iota}$ be the minimal inner normal of $f$. Moreover, let $\BF{a}_1, \dots, \BF{a}_{d-1}$ be the external columns of $A$ on $f$, and let $\gamma$ be the external chamber containing $f$. Then the polynomial $p_A^{\gamma}(\BF{b})$ associated to $\gamma$ is
\begin{align}
    p_A^{\gamma}(\BF{b}) &= \binom{ \BFG{\iota} \cdot \BF{b} + n-d}{n-d}. \label{eq:facet} \\
    &= \binom{|\det(\BF{a}_1, \dots, \BF{a}_{d-1}, \BF{b})| + n-d}{n-d} \label{eq:abs-det} 
\end{align}
\end{cor}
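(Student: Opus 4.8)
The plan is to obtain the corollary as a direct specialization of Theorem~\ref{theo:external-chamber-binom} to the unimodular setting, with the two simplifications that (i)~the saturation hypothesis becomes automatic and (ii)~the integer $\beta$ and the denominator determinant appearing in that theorem both equal $\pm 1$.

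First I would set up the objects. Since $f$ is a facet of $\pos(A)$ containing exactly $d-1$ columns of $A$, Proposition~\ref{prop:construct-external} guarantees that $f$ is an external facet and that the external chamber $\gamma$ containing it is well defined, with external columns precisely $\BF{a}_1, \dots, \BF{a}_{d-1}$ (the columns on $f$) and a single internal ray generator. Because $A$ is unimodular it is in particular DeLS-unimodular, so every subset of its columns generates an affine semigroup saturated in $\lattice(A)$; in particular $\semigroup{\BF{a}_1, \dots, \BF{a}_{d-1}}$ is saturated in $\lattice(A)$, so the saturation hypothesis of Theorem~\ref{theo:external-chamber-binom} holds. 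Moreover $\semigroup{A} = \pos(A) \cap \mathbb{Z}^d$, so the formulas need only be verified for $\BF{b} \in \gamma \cap \semigroup{A}$.

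Next I would apply Lemma~\ref{lem:column-dot-product} to the facet $f$ with its minimal inner facet normal $\BFG{\iota}$: for every column $\BF{a}_j$ of $A$ one has $\BFG{\iota}\cdot\BF{a}_j = 0$ if $\BF{a}_j \in f$ and $\BFG{\iota}\cdot\BF{a}_j = 1$ if $\BF{a}_j \notin f$. This is exactly the hypothesis of Theorem~\ref{theo:external-chamber-binom} with $\beta = 1$, so that theorem applies and, by \eqref{eq:binom-dot-product}, yields
\[
p_A^{\gamma}(\BF{b}) = \binom{\BFG{\iota}\cdot\BF{b} + n - d}{n-d}
\]
for all $\BF{b} \in \gamma \cap \semigroup{A}$, which is \eqref{eq:facet}. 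For \eqref{eq:abs-det} I would fix any column $\BF{a}_{d+\ell} \notin f$ and use the identity $\BFG{\iota}\cdot\BF{b} = \det(\BF{a}_1, \dots, \BF{a}_{d-1}, \BF{b})/\det(\BF{a}_1, \dots, \BF{a}_{d-1}, \BF{a}_{d+\ell})$ established as \eqref{eq:dot-product-determinant}; since $\pos(\BF{a}_1, \dots, \BF{a}_{d-1}, \BF{a}_{d+\ell})$ is a simplicial cone of the unimodular matrix $A$, its denominator has absolute value $1$, and since $\BFG{\iota}\cdot\BF{b} = \lambda_d \geq 0$ (from the representation $\BF{b} = \lambda_1\BF{a}_1 + \dots + \lambda_{d-1}\BF{a}_{d-1} + \lambda_d\BF{a}_{d+\ell}$ with $\lambda_i \geq 0$, valid since $\BF{b} \in \gamma \subseteq \pos(A_s)$ by Proposition~\ref{prop:construct-external}), we obtain $\BFG{\iota}\cdot\BF{b} = |\det(\BF{a}_1, \dots, \BF{a}_{d-1}, \BF{b})|$. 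Substituting into \eqref{eq:facet} gives \eqref{eq:abs-det}.

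The only point needing a little care is the sign bookkeeping in this last step: one cannot simply read off \eqref{eq:binom-determinant}, because the numerator determinant $\det(\BF{a}_1, \dots, \BF{a}_{d-1}, \BF{b})$ may itself be negative; the correct observation is that it is a \emph{nonnegative} multiple of the denominator determinant, so the ratio is nonnegative and, once the denominator is normalized to $\pm 1$ by unimodularity, the ratio equals the absolute value of the numerator. Beyond this, the proof is a routine combination of Proposition~\ref{prop:construct-external}, Lemma~\ref{lem:column-dot-product}, and Theorem~\ref{theo:external-chamber-binom}, so I do not anticipate any genuine obstacle.
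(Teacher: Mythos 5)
Your proposal is correct and follows exactly the route the paper intends: the corollary is stated immediately after Lemma~\ref{lem:column-dot-product} precisely so that it falls out of Theorem~\ref{theo:external-chamber-binom} with $\beta = 1$, with saturation supplied by DeLS-unimodularity. Your extra care with the sign in passing from \eqref{eq:binom-determinant} to the absolute-value form \eqref{eq:abs-det} (the ratio is nonnegative and the denominator is $\pm 1$, hence equals $|\det(\BF{a}_1,\dots,\BF{a}_{d-1},\BF{b})|$) is exactly the right observation and is a detail the paper leaves implicit.
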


\begin{rem}
We note that $|\det(\BF{a}_1, \dots, \BF{a}_{d-1}, \BF{b})|$ is the continuous volume of the paralleliped 
\begin{equation*}
\Pi := \{\lambda_1\BF{a}_1 + \dots + \lambda_{d-1}\BF{a}_{d-1} + \lambda_d\BF{b} : 0 \leq \lambda_1, \dots, \lambda_d \leq 1\}
\end{equation*}
generated by $\BF{a}_1, \dots, \BF{a}_{d-1}, \BF{b}$.
\end{rem}

\section{Multigraph enumeration} \label{sec:multigraph}

In this section we consider the problem of enumerating the number of labelled multigraphs with vertices $v_1, \dots, v_m$ and a given sequence $d_1, \dots, d_m$ so that $\deg(v_i) = d_i$ for $1~\leq i~\leq m$. In order that this may be encoded as a vector partition function, we allow multiple edges between any pair of vertices but do not allow loops. Most known results for enumerating the number of graphs or multigraphs with a given degree sequence are asymptotic (see for example \cite{BaHa13, GrMc13, McWaWo02}). We give an exact result for a relatively simple case, that we found by identifying external chambers of the corresponding vector partition functions. We have not seen this result in the literature, nor any attempts to approach this problem via the vector partition function formulation. This is somewhat surprising since it is well known that the enumeration of simple graphs with a given degree sequence can be viewed as counting integer points in polytopes.

For a positive integer $m$, define $M_m(d_1, \dots, d_m)$ to be the number of multigraphs on the vertex set $v_1, \dots, v_m$ with degree sequence $(d_1, \dots, d_m)$. We note that we do not assume that the degree sequence is monotonically decreasing unless explicitly stated.

For each pair of distinct vertices $v_i$ and $v_j$ ($1 \leq i \neq j \leq m$), let $x_{i,j}$ denote the number of edges joining $v_i$ and $v_j.$
A multigraph on the vertex set $v_1, \dots, v_m$ has degree sequence $(d_1, \dots, d_m)$ if the following $m$ linear equations are satisfied:
\begin{equation} \label{eq:multigraph-lin-eqs}
    \sum_{\substack{i=1 \\ i \neq j}}^{m} x_{i,j} = d_j \quad \text{for all } j=1, \dots, m.
\end{equation}
The number of edges between any pair of vertices is a non-negative integer, and so one can describe $M_m(d_1, \dots, d_m)$ as the number of solutions $\BF{x} = (x_{1,2}, \dots, x_{m-1,m}) \in \mathbb{N}^m$ satisfying the linear equations of \eqref{eq:multigraph-lin-eqs}. This description leads to the following vector partition function formulation.

\begin{prop} \label{prop:multigraph-vpf}
Let $m$ be a positive integer, and let $G_m$ denote the incidence matrix of the complete graph $K_m$. Then 
\begin{equation*}
M_m(d_1, \dots, d_m) = p_{G_m}(d_1, \dots, d_m)
\end{equation*}
for each degree sequence $(d_1, \dots, d_m) \in \mathbb{N}^m$ on the lattice $d_1 + d_2 + \dots + d_m \equiv 0 \mod 2$. 
\end{prop}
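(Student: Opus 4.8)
The plan is to verify directly that the vector partition function $p_{G_m}$, evaluated at a degree sequence $(d_1,\dots,d_m)$, counts exactly the labelled loopless multigraphs with that degree sequence, and then check that the domain and lattice conditions align. First I would recall that $G_m$, the incidence matrix of $K_m$, is the $m \times \binom{m}{2}$ matrix whose columns are indexed by unordered pairs $\{i,j\}$ with $1 \le i < j \le m$, the column for $\{i,j\}$ being $\BF{e}_i + \BF{e}_j$. Then for a vector $\BF{x} = (x_{i,j})_{i<j} \in \mathbb{N}^{\binom{m}{2}}$, the equation $G_m \BF{x} = (d_1,\dots,d_m)$ is precisely the system \eqref{eq:multigraph-lin-eqs}: the $j$th coordinate of $G_m\BF{x}$ collects $x_{i,j}$ over all pairs containing $j$. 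Hence $p_{G_m}(d_1,\dots,d_m) = \#\{\BF{x} \in \mathbb{N}^{\binom{m}{2}} : G_m\BF{x} = \BF{d}\}$ is, by the preceding discussion, exactly $M_m(d_1,\dots,d_m)$, since a multigraph is determined by its edge-multiplicities $x_{i,j}$ and any nonnegative integer assignment of multiplicities gives a valid loopless multigraph.

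Next I would check that $p_{G_m}$ is actually a well-defined function, i.e. that $G_m$ satisfies the standing hypotheses on $A$: it must have rank $d = m$ (here $n = \binom{m}{2}$) and satisfy $\ker(G_m) \cap \mathbb{R}^n_{\ge 0} = \{\BF{0}\}$. The latter is immediate because every column of $G_m$ is nonnegative and nonzero, so a nonnegative combination summing to zero forces all coefficients to be zero; this guarantees $p_{G_m}(\BF{b})$ is finite for every $\BF{b}$. For the rank: for $m \ge 2$ the columns $\BF{e}_1+\BF{e}_2, \BF{e}_1+\BF{e}_3, \dots, \BF{e}_1+\BF{e}_m$ together with $\BF{e}_2+\BF{e}_3$ span $\mathbb{R}^m$ (one can solve for each $\BF{e}_i$), so $\rank(G_m) = m$. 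Strictly, the proposition is stated for all positive integers $m$, so I would note the trivial edge case $m = 1$ separately: $G_1$ is the empty $1 \times 0$ matrix, $p_{G_1}(d_1) = 1$ if $d_1 = 0$ and $0$ otherwise, matching the single graph on one vertex with degree $0$; and the lattice condition $d_1 \equiv 0 \bmod 2$ is exactly the parity condition for realizability.

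Finally I would address the lattice/domain qualifier in the statement. The lattice $\lattice(G_m)$ generated by the columns $\BF{e}_i + \BF{e}_j$ is $\{\BF{d} \in \mathbb{Z}^m : \sum_i d_i \equiv 0 \bmod 2\}$: each generator has coordinate sum $2$, so every lattice point has even coordinate sum, and conversely any even-sum integer vector is reachable (e.g. using a spanning set as above plus adjustments, or by a direct induction on $\sum |d_i|$). A degree sequence with odd total is not the degree sequence of any (multi)graph by the handshake lemma, so $M_m$ is $0$ there, consistent with $p_{G_m}$ vanishing off $\lattice(G_m)$; the proposition only claims equality on the stated lattice, where both sides genuinely count the same objects. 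The main (and only mild) obstacle is bookkeeping: carefully matching the indexing of columns of $G_m$ with the pairs $\{i,j\}$ and confirming the coordinate-wise identity $(G_m\BF{x})_j = \sum_{i \ne j} x_{i,j}$; everything else is a direct translation between multigraphs and lattice points, so I expect the proof to be short.
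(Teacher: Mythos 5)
Your argument is correct and is essentially the paper's own proof: the proposition is justified there by the discussion immediately preceding it, which identifies the system \eqref{eq:multigraph-lin-eqs} with $G_m\BF{x}=\BF{d}$ and multigraphs with their edge-multiplicity vectors $\BF{x}\in\mathbb{N}^{\binom{m}{2}}$, exactly as you do. Your additional checks of the standing hypotheses and of $\lattice(G_m)$ are fine apart from the harmless slip that $G_2$ has rank $1$ rather than $2$ (your spanning argument needs $m\geq 3$); this does not affect the equality of the two counts.
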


\begin{eg} \label{eg:multigraphs}
Let us compute $M_4(5,4,3,2)$, the number of multigraphs on the vertex set $\{v_1, v_2, v_3, v_4\}$ and degree sequence $\BF{d} = (5,4,3,2).$ By Proposition \ref{prop:multigraph-vpf}, this is equivalent to computing $p_{G_6}(\BF{d})$, and thus of enumerating the number of integer solutions $\BF{x} = (x_{1,2}, x_{1,3}, x_{1,4}, x_{2,3}, x_{2,4}, x_{3,4}) \in \mathbb{Z}^6$ in the polytope $G_4\BF{x} = \BF{d}, \BF{x} \geq \BF{0}$.
Using \emph{Latte} we compute the solutions explicitly; there are six of them. In Table \ref{tab:multigraph} we give each of the solutions $\BF{x} \in \mathbb{Z}^6$ along with the corresponding multigraph. 
\begin{table} 
    \centering
    \begin{tabular}{ccccc}
       \includegraphics[scale=0.55] {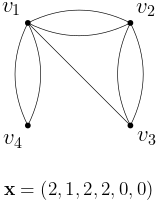} 
       & \quad \quad \quad \quad &
        \includegraphics[scale=0.55]{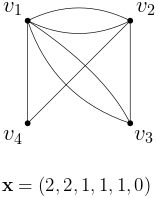}
        & \quad \quad \quad \quad &
        \includegraphics[scale=0.55]{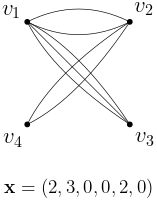} \\
        & & & &  \\
        & & & &  \\
        \includegraphics[scale=0.55] {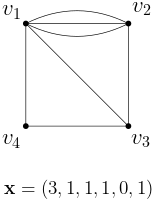} 
       & \quad \quad \quad \quad &
        \includegraphics[scale=0.55]{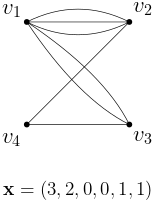}
        & \quad \quad \quad \quad &
        \includegraphics[scale=0.55]{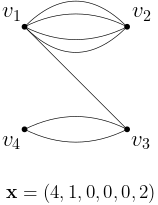}
    \end{tabular}
\caption[The multigraphs with vertices $v_1, v_2, v_3, v_4$ and degree sequence (5,4,3,2).]{The multigraphs with vertices $v_1, v_2, v_3, v_4$ and degree sequence $\BF{d} = (5,4,3,2)$. Each multigraph is labeled by the corresponding solution $\BF{x} \in \mathbb{N}^6$ of $G_4\BF{x} = \BF{d}$.}
\label{tab:multigraph}
\end{table}
\end{eg}

Our goal is to study a particular external chamber of the vector partition function $p_{G_m}$. 

\begin{eg} \label{eg:multigraph}
Let $m=6$. Then 
\[
G_6 = \left(\begin{array}{rrrrrrrrrrrrrrr}
1 & 1 & 1 & 1 & 1 & 0 & 0 & 0 & 0 & 0 & 0 & 0 & 0 & 0 & 0 \\
1 & 0 & 0 & 0 & 0 & 1 & 1 & 1 & 1 & 0 & 0 & 0 & 0 & 0 & 0 \\
0 & 1 & 0 & 0 & 0 & 1 & 0 & 0 & 0 & 1 & 1 & 1 & 0 & 0 & 0 \\
0 & 0 & 1 & 0 & 0 & 0 & 1 & 0 & 0 & 1 & 0 & 0 & 1 & 1 & 0 \\
0 & 0 & 0 & 1 & 0 & 0 & 0 & 1 & 0 & 0 & 1 & 0 & 1 & 0 & 1 \\
0 & 0 & 0 & 0 & 1 & 0 & 0 & 0 & 1 & 0 & 0 & 1 & 0 & 1 & 1
\end{array}\right)
\]
and the chamber $\gamma$ of $G_6$ defined by minimal ray generators
\begin{align*}
\BF{v}_1 := (3, 1, 1, 1, 1, 1), \
\BF{v}_2 := (1, 1, 0, 0, 0, 0), \
\BF{v}_3 := (1, 0, 1, 0, 0, 0), \\
\BF{v}_4 :=(1, 0, 0, 1, 0, 0), \
\BF{v}_5 := (1, 0, 0, 0, 1, 0), \
\BF{v}_6 := (1, 0, 0, 0, 0, 1)
\end{align*}
is external with $\BF{v}_1$ the sole internal ray generator as $\BF{v}_2, \dots, \BF{v}_6$ are external columns of $G_6$.

Our aim is to compute the quasi-polynomial $p_{G_6}^{\gamma}$. The matrix $G_6$ is not DeLS-unimodular (and therefore also not unimodular), since $\mathbf{1} \in \mathcal{L}(G_6)$, but the polytope defined by $G_6\BF{x} = \mathbf{1}$, $\BF{x} \geq 0$ is not integral -- for example one of its vertices is: 
\begin{equation*}
(1/2, 1/2, 0, 0, 0, 1/2, 0, 0, 0, 0, 0, 0, 1/2, 1/2, 1/2).
\end{equation*}
Therefore we cannot use Corollary \ref{cor:det-unimodular} or Corollary \ref{cor:unimodular}. We illustrate two methods, one using Theorem \ref{theo:ehrhart-det} and another using Theorem \ref{theo:external-chamber-binom}.

We begin by showing that the external columns $\{\BF{v}_2, \dots, \BF{v}_6\}$ generate an affine semigroup that is saturated in $\lattice(G_6)$:
if $\BF{u} \in \pos(\BF{v}_2, \dots, \BF{v}_6) \cap \mathcal{L}(G_6)$, then 
\begin{align*}
    \BF{u} &= \lambda_2\BF{v}_2 + \dots + \lambda_6\BF{v}_6 \\
    &= (\lambda_2 + \dots + \lambda_6, \lambda_2, \dots, \lambda_6)
\end{align*}
for some $\lambda_2, \dots, \lambda_6 \geq 0$. Since $\BF{u} \in \mathcal{L}(G_6)$ each of $\lambda_2, \dots, \lambda_6$ must also be integral, and so $\BF{u} \in \semigroup{G_6}.$ Therefore, $\pos(\BF{v}_2, \dots, \BF{v}_6) \cap \mathcal{L}(G_6) \subseteq \semigroup{G_6}$. The reverse inclusion is immediate, so $\semigroup{\BF{v}_2, \dots, \BF{v}_6}$ is indeed saturated in $\lattice(G_6)$.

Using Latte, we compute that 
\begin{equation*}
h(t) := p_{G_6}(tv_1) = \binom{t + 9}{9},
\end{equation*}
and so by Theorem \ref{theo:ehrhart-det},
\begin{equation*}
p_{G_6}^{\gamma} = h\left(\frac{\det{d, v_2, v_3, v_4, v_5, v_6}}{\det{v_1, v_2, v_3, v_4, v_5, v_6}}\right) = \binom{\frac{-d_1 + d_2 + d_3 + d_4 + d_5 + d_6}{2} + 9}{9}
\end{equation*}
on the lattice $d_1 + d_2 + d_3 + d_4 + d_5 + d_6 \equiv 0 \mod 2$.

Since $p_{G_6}^{\gamma}$ is a polynomial, we see that applying Theorem \ref{theo:external-chamber-binom} is a simpler approach, since no computer aid is required. For the external facet of $\gamma$ generated by $\BF{v}_2, \dots, \BF{v}_6$, the minimal inner facet normal is $\BFG{\iota} = (-1, 1, 1, 1, 1, 1)$. We note that $\BFG{\iota} \cdot \BF{g} = 2$ for all columns $\BF{g}$ of $G_6$ not on the external facet. Therefore,by Theorem \ref{theo:external-chamber-binom},
\begin{align*}
    p_{G_6}^{\gamma}(\BF{d}) &= \binom{\frac{\BFG{\iota} \cdot \BF{d}}{2} + 9}{9} \\
    &= \binom{\frac{-d_1 + d_2 + d_3 + d_4 + d_5 + d_6}{2} + 9}{9}.
\end{align*}
\end{eg}

We prove combinatorially that this result is true in general. 

\begin{theo} \label{theo:multigraph}
Let $m$ be a positive integer, and let $(d_1, \dots, d_m) \in \mathbb{N}^{m}$ be monotonically decreasing. 
If $d_1 + d_m \geq \sum_{i=2}^{m-1} d_i$, then
\begin{equation} \label{eq:multigraph}
M_m(d_1, \dots, d_m) = \binom{e - d_1 + \binom{m-1}{2} - 1}{{\binom{m-1}{2}} - 1}
\end{equation}
where $e$ is the number of edges $\sum_{i=1}^{m} \frac{d_i}{2}$ of any multigraph with degree sequence $(d_1, \dots, d_m)$.
\end{theo}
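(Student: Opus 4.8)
The plan is to exhibit the chamber $\gamma$ of $G_m$ studied in Example~\ref{eg:multigraph} in general, verify it is an external chamber whose external columns generate a saturated affine semigroup, and then apply Theorem~\ref{theo:external-chamber-binom} (or Theorem~\ref{theo:ehrhart-det}). First I would set up the combinatorial dictionary: by Proposition~\ref{prop:multigraph-vpf}, $M_m(d_1, \dots, d_m) = p_{G_m}(\BF{d})$, where $G_m$ is the incidence matrix of $K_m$, a $m \times \binom{m}{2}$ matrix, so $n - d = \binom{m}{2} - m = \binom{m-1}{2} - 1$; this already matches the bottom of the binomial coefficient in \eqref{eq:multigraph}. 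The relevant chamber is the one generated by the $m-1$ external columns corresponding to the edges $\{v_1, v_j\}$ for $j = 2, \dots, m$ (call these $\BF{v}_2, \dots, \BF{v}_m$, so $\BF{v}_j = \BF{e}_1 + \BF{e}_j$), together with the internal ray generator $\BF{v}_1 := (m-2, 1, 1, \dots, 1)$, which should be checked to be $p_{G_m}(\BF{v}_1)$-realizable (it is the degree sequence of $K_{m-1}$ joined appropriately). I would first confirm the columns $\{v_1, v_j\} = \BF{e}_1 + \BF{e}_j$ are genuinely external columns of $G_m$: each is a ray generator of $\pos(G_m)$ (they lie on the facet $\{x : x_1 = 0\}^{+}$-type facets cut out by coordinate hyperplanes, or more directly the facet where $\sum_{i \neq 1} b_i = b_1$ after noting $\pos(G_m)$ is cut out by the inequalities $b_j \le \sum_{i \neq j} b_i$) and no two are parallel, so Proposition~\ref{prop:external-col-equivalence} applies.

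Next I would verify that $\gamma := \pos(\BF{v}_1, \BF{v}_2, \dots, \BF{v}_m)$ is indeed an external chamber. The external facet $f = \pos(\BF{v}_2, \dots, \BF{v}_m)$ should be identified as a facet of $\pos(G_m)$ containing exactly $m-1$ columns of $G_m$, after which Proposition~\ref{prop:construct-external} hands us that $\gamma$ (the intersection of all simplicial cones $\pos(\BF{v}_2, \dots, \BF{v}_m, \BF{g})$ over the remaining columns $\BF{g}$) is the unique external chamber on $f$, and that its internal ray generator is $\BF{v}_1$. Concretely, the minimal inner facet normal of $f$ is $\BFG{\iota} = (-1, 1, 1, \dots, 1) \in \mathbb{Z}^m$: one checks $\BFG{\iota} \cdot \BF{v}_j = 0$ for $j \ge 2$ (since $\BF{v}_j = \BF{e}_1 + \BF{e}_j$), and $\BFG{\iota} \cdot \BF{g} = 2$ for every other column $\BF{g} = \BF{e}_i + \BF{e}_k$ with $1 \notin \{i,k\}$, and $\BFG{\iota} \cdot \BF{v}_1 = m - 1 > 0$. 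The hypothesis $d_1 + d_m \ge \sum_{i=2}^{m-1} d_i$, together with monotonicity and the parity condition, is exactly what places the degree sequence $\BF{d}$ inside $\gamma \cap \semigroup{G_m}$: membership in $\gamma$ means $\BF{d} = \lambda_1 \BF{v}_1 + \sum_{j \ge 2} \lambda_j \BF{v}_j$ with all $\lambda \ge 0$, and solving this linear system gives $\lambda_1 = \frac{1}{m-2}\bigl(\text{something}\bigr)$ and $\lambda_j = d_j - \lambda_1$; non-negativity of the $\lambda_j$ is controlled by the smallest $d_j$, i.e.\ by $d_{m-1}$ and $d_m$, and the stated inequality is designed to guarantee it. This is the step I expect to be the main obstacle — carefully translating the inequality $d_1 + d_m \ge \sum_{i=2}^{m-1} d_i$ and monotonicity into membership $\BF{d} \in \gamma$, handling the boundary cases where some $\lambda_j = 0$ and making sure the degree sequence cannot leak into an adjacent chamber.

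Then I would check saturation: if $\BF{u} = \sum_{j=2}^{m} \lambda_j \BF{v}_j \in \pos(\BF{v}_2, \dots, \BF{v}_m) \cap \lattice(G_m)$, then $\BF{u} = (\lambda_2 + \dots + \lambda_m, \lambda_2, \dots, \lambda_m)$, so each $\lambda_j = u_j \in \mathbb{Z}$, whence $\BF{u} \in \semigroup{\BF{v}_2, \dots, \BF{v}_m}$; this is the same argument as in Example~\ref{eg:multigraph} and works verbatim for general $m$. With all hypotheses of Theorem~\ref{theo:external-chamber-binom} in place, and having computed $\BFG{\iota} \cdot \BF{a}_j \in \{0, 2\}$ as above (so $\beta = 2$), equation~\eqref{eq:binom-dot-product} gives
\begin{equation*}
M_m(\BF{d}) = p_{G_m}^{\gamma}(\BF{d}) = \binom{\tfrac{\BFG{\iota} \cdot \BF{d}}{2} + n - d}{n - d} = \binom{\tfrac{-d_1 + d_2 + \dots + d_m}{2} + \binom{m-1}{2} - 1}{\binom{m-1}{2} - 1}.
\end{equation*}
Finally I would rewrite the numerator: since $e = \tfrac{1}{2}\sum_{i=1}^m d_i$, we have $\tfrac{-d_1 + \sum_{i \ge 2} d_i}{2} = \tfrac{\sum_i d_i}{2} - d_1 = e - d_1$, which yields exactly \eqref{eq:multigraph}. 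As a sanity check one can note $p_{G_m}(t \BF{v}_1) = \binom{t + n - d}{n - d}$ follows from Theorem~\ref{theo:external-chamber-binom} applied with $\BF{b} = \BF{v}_1$ (giving $\BFG{\iota} \cdot \BF{v}_1 / \beta = (m-1)/2$... — here one must instead use that $\BF{v}_1$ lies in a simplicial cone and recompute, or simply invoke the determinantal form \eqref{eq:binom-determinant}), matching the Latte computation $\binom{t+9}{9}$ when $m = 6$. The combinatorial interpretation promised in the theorem statement — that this counts loopless multigraphs on $v_1, \dots, v_m$ with the prescribed degrees and a saddle-type constraint on the $d_i$ — then follows from Proposition~\ref{prop:multigraph-vpf} and the identification of $\gamma$ with the degree sequences satisfying $d_1 + d_m \ge \sum_{i=2}^{m-1} d_i$.
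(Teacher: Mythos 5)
Your proposal is correct in outline and coincides with the \emph{geometric} proof that the paper itself gives as a second derivation of Theorem \ref{theo:multigraph}: identify the external facet $f$ of $\pos(G_m)$ spanned by the columns $\BF{e}_1+\BF{e}_j$ ($2\le j\le m$), verify saturation exactly as in Example \ref{eg:multigraph}, invoke Proposition \ref{prop:construct-external} to get the external chamber, and apply Theorem \ref{theo:external-chamber-binom} with $\BFG{\iota}=(-1,1,\dots,1)$ and $\beta=2$. The paper's official proof, however, is a three-line combinatorial argument: the hypothesis $d_1+d_m\ge\sum_{i=2}^{m-1}d_i$ forces $d_i\ge d_m\ge e-d_1$ for every $i\ge 2$, so the $e-d_1$ edges not incident to $v_1$ may be distributed arbitrarily among the $\binom{m-1}{2}$ pairs from $\{v_2,\dots,v_m\}$ without violating any degree bound, giving the stars-and-bars count, after which the edges at $v_1$ are forced; the geometric route is what \emph{discovers} the right inequality, but the combinatorial one is shorter. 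Two slips in your write-up: the internal ray generator is $(m-3)\BF{e}_1+\sum_{i=2}^m\BF{e}_i$, not $(m-2,1,\dots,1)$ (compare the $m=6$ case, where it is $(3,1,1,1,1,1)$), and accordingly $\BFG{\iota}\cdot\BF{v}_1=2$, not $m-1$; neither affects the argument since only positivity and the value $\BFG{\iota}\cdot\BF{g}=2$ on columns off $f$ enter the formula. Your flagged ``main obstacle'' resolves cleanly: writing $\BF{d}=\lambda_1\BF{v}_1+\sum_{j\ge2}\lambda_j(\BF{e}_1+\BF{e}_j)$ with the correct $\BF{v}_1$ gives $\lambda_1=e-d_1$ and $\lambda_j=d_j-(e-d_1)$, so $\BF{d}\in\gamma$ is equivalent to $\sum_{i\ge2}d_i\ge d_1$ together with $d_1+d_j\ge\sum_{i\ne 1,j}d_i$ for all $j\ge 2$, and under monotonicity the latter family follows from its $j=m$ instance, which is the stated hypothesis. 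The only residual caveat --- shared with the paper --- is that $\sum_{i\ge2}d_i\ge d_1$ (equivalently $e-d_1\ge0$) and the parity of $\sum_i d_i$ are implicitly assumed via the phrase ``of any multigraph with degree sequence $(d_1,\dots,d_m)$.''
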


\begin{proof}
Since $d_1 + d_m \geq \sum_{i=2}^{m-1} d_i$, we see that 
$2(d_1 + d_m) \geq \sum_{i=1}^{m} d_i$, so $d_1 + d_m \geq \frac{\sum_{i=1}^m d_i}{2}$ and so $d_m \geq e - d_1$. Now consider distributing edges between the vertices $v_2, v_3, \dots, v_m$. There are $e - d_1$ edges to distribute, and $\binom{m-1}{2}$ vertex pairs. Since $d_i \geq d_m \geq  e - d_1$ for each $2 \leq i \leq m$, we may distribute these edges in any way possible, and no vertex will be incident to too many edges. One can see that the number of such choices is given by Eq.~\eqref{eq:multigraph}. This leaves a single way to distribute the remaining edges from $v_1$ to $\{v_2, v_3, \dots, v_m\}$. 
\end{proof}

\begin{rem}
Recall that in Example \ref{eg:multigraphs} we computed $M_4(5,4,3,2)$ using \emph{Latte}. Since $d_1 + d_4 = 5+2 \geq 4+3 = d_2 + d_3$, we can also apply Theorem \ref{theo:multigraph} to compute this value. Here $e = 7, d_1 = 5$ and $m = 4$, and so by Theorem \ref{theo:multigraph}:
\begin{equation*}
M_4(5,4,3,2) = \binom{7-5 + \binom{4-1}{2} - 1}{\binom{4-1}{2}-1} = 6
\end{equation*}
agreeing with the value computed with \emph{Latte}.
\end{rem}

Although Theorem \ref{theo:multigraph} in the end has a simple combinatorial proof, the vector partition function approach yields the correct inequalities to consider for which the formula becomes simple. We also give a geometric proof of Theorem \ref{theo:multigraph}. Recall from Example \ref{eg:multigraph}, that the matrices $G_m$ are not DeLS-unimodular in general, so we cannot use Corollary \ref{cor:det-unimodular} or Corollary \ref{cor:unimodular} to compute the polynomial. Instead, we use Theorem \ref{theo:external-chamber-binom}.

\begin{lem}
Let $m \geq 3$ be an integer. The cone defined by the minimal ray generators
\[
\gamma := \{(m-3)\BF{e}_1 + \sum_{i=2}^{m} \BF{e}_i\} \cup \{\BF{e}_1 + \BF{e}_i : 2 \leq i \leq m\}
\]
is an external chamber of $G_m$. Moreover the ray generator in the first set is the sole internal ray generator, and the ray generators in the second set are external columns. Finally the affine semigroup generated by the second set is saturated in $\lattice(G_m)$.
\end{lem}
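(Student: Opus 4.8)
The plan is to verify three things in turn: (1) that the listed rays are indeed the minimal ray generators of a genuine chamber of $G_m$; (2) that the $m-1$ rays $\BF{e}_1 + \BF{e}_i$ ($2 \le i \le m$) are external columns of $G_m$, while $(m-3)\BF{e}_1 + \sum_{i \ge 2}\BF{e}_i$ is internal; and (3) the saturation statement. The cleanest route to (1) and (2) is to invoke Proposition~\ref{prop:construct-external}: I would show that $f := \pos(\BF{e}_1+\BF{e}_2, \dots, \BF{e}_1+\BF{e}_m)$ is a facet of $\pos(G_m)$ containing exactly the $d-1 = m-1$ columns $\BF{a}_1, \dots, \BF{a}_{m-1}$ of $G_m$ corresponding to the edges $\{v_1,v_i\}$, $i = 2,\dots,m$. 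First I would note $\pos(G_m)$ lives in the hyperplane-free full cone of $\mathbb{R}^m$ generated by all $\BF{e}_i + \BF{e}_j$; a supporting linear functional cutting out $f$ is $\BFG{\iota} = (-1,1,1,\dots,1)$, since $\BFG{\iota}\cdot(\BF{e}_1+\BF{e}_i) = 0$ for $i\ge 2$ while $\BFG{\iota}\cdot(\BF{e}_i+\BF{e}_j) = 2 > 0$ for $2 \le i < j \le m$. This simultaneously shows $H_{\BFG{\iota}}$ is a supporting hyperplane of $\pos(G_m)$, that exactly the $m-1$ columns $\BF{e}_1+\BF{e}_i$ lie on it, and (by Proposition~\ref{prop:construct-external}, using that $G_m$ has more than one chamber for $m \ge 3$ — which follows since $\binom{m}{2} > m$ gives degree $n-d > 0$, or can be checked directly) that $f$ is an external facet and the unique external chamber containing it is $\gamma = \bigcap_{k} \pos(\BF{a}_1,\dots,\BF{a}_{m-1}, \BF{a}_{m+k})$. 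It then remains to identify that intersection with the cone spanned by the $m-1$ external columns together with $(m-3)\BF{e}_1 + \sum_{i\ge 2}\BF{e}_i$, and to check this last vector is the (primitive) internal ray generator.

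For the internal ray generator, I would compute, for a column $\BF{a} = \BF{e}_i + \BF{e}_j$ with $2 \le i < j$, the intersection of the simplicial cone $\pos(\BF{e}_1+\BF{e}_2,\dots,\BF{e}_1+\BF{e}_m, \BF{e}_i+\BF{e}_j)$ (dropping the ray $\BF{e}_1+\BF{e}_i$, say) with the others; alternatively, by Proposition~\ref{prop:construct-external} the internal ray $\BF{v}$ satisfies $\BF{v} \in \pos(\BF{a}_1,\dots,\BF{a}_{m-1},\BF{c})$ for every non-facet column $\BF{c}$, and one checks directly that $\BF{v}_{\mathrm{int}} := (m-3)\BF{e}_1 + \sum_{i=2}^m \BF{e}_i$ works: writing $\BF{v}_{\mathrm{int}} = \sum_{i<j} y_{ij}(\BF{e}_i+\BF{e}_j)$ forces $y_{1i} = \tfrac{m-3 - \sum_{2\le k<l}y_{kl}\cdot 0 + \dots}{\dots}$ — more simply, pairing with $\BFG{\iota}$ gives $\BFG{\iota}\cdot\BF{v}_{\mathrm{int}} = -(m-3) + (m-1) = 2 > 0$, so $\BF{v}_{\mathrm{int}} \notin f$, and its coordinates are positive so it lies in $\pos(G_m)$; primitivity is clear since $\gcd(m-3,1) = 1$. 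That it is a generator of $\gamma$ (not merely in $\gamma$) follows because $\gamma$ is simplicial with $m-1$ of its rays already accounted for by the external facet, so the remaining ray is forced, and one verifies $\BF{v}_{\mathrm{int}}$ generates it by checking it is not a positive combination of the external rays alone (again via $\BFG{\iota}$, or by looking at the first coordinate: $(m-3)$ versus $\sum \lambda_i$ with the other coordinates pinned to $\lambda_i$).

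The saturation claim is the routine-but-necessary part and mirrors the argument in Example~\ref{eg:multigraph}. Let $S = \{\BF{e}_1+\BF{e}_i : 2 \le i \le m\}$. If $\BF{u} \in \pos(S) \cap \lattice(G_m)$ then $\BF{u} = \sum_{i=2}^m \lambda_i(\BF{e}_1+\BF{e}_i)$ with $\lambda_i \ge 0$, so $\BF{u} = (\sum_i \lambda_i)\BF{e}_1 + \sum_{i=2}^m \lambda_i \BF{e}_i$; reading off coordinates $2,\dots,m$ gives $\lambda_i = u_i \in \mathbb{Z}$, hence $\BF{u} \in \semigroup{S}$. The reverse inclusion $\semigroup{S} \subseteq \pos(S)\cap\lattice(G_m)$ is immediate (the columns of $S$ are columns of $G_m$), so $\semigroup{S} = \pos(S)\cap\lattice(G_m)$, which is exactly saturation of $\semigroup{S}$ in $\lattice(G_m)$.

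The main obstacle is step (1): cleanly proving $f$ is a facet of $\pos(G_m)$ containing \emph{exactly} $m-1$ columns, and that $\gamma$ as described really is the whole intersection of simplicial cones through $f$ (equivalently, that no further refinement occurs at that facet beyond producing the single internal ray $\BF{v}_{\mathrm{int}}$). The "exactly $m-1$ columns" part is handled by the $\BFG{\iota}$ computation above, and the facet-dimension count by noting $\BF{e}_1+\BF{e}_2,\dots,\BF{e}_1+\BF{e}_m$ are linearly independent; the identification of the full intersection is then delivered for free by the uniqueness clause of Proposition~\ref{prop:construct-external}. So in practice the "hard part" reduces to correctly setting up the $\BFG{\iota}$ certificate and confirming $G_m$ has at least two chambers, after which everything is bookkeeping.
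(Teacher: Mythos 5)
Most of your proposal runs parallel to the paper's argument: the inner-normal certificate $\BFG{\iota}=(-1,1,\dots,1)$ for the facet $f$, the appeal to Proposition~\ref{prop:construct-external}, and the coordinate-reading saturation argument are all essentially what the paper does. The genuine gap is in the final identification step: Proposition~\ref{prop:construct-external} gives you the external chamber as $\tilde{\gamma}:=\bigcap_{1<j<k\le m}\pos\bigl(f,\BF{e}_j+\BF{e}_k\bigr)$, and you still must show this equals the stated V-cone, i.e.\ that $\BF{v}_{\mathrm{int}}=(m-3)\BF{e}_1+\sum_{i\ge2}\BF{e}_i$ generates the internal ray. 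Your proposed check --- that $\BF{v}_{\mathrm{int}}$ lies in every $\sigma_{j,k}$ and is not a nonnegative combination of the external rays alone --- only establishes $\BF{v}_{\mathrm{int}}\in\tilde{\gamma}\setminus f$. Every point of $\tilde{\gamma}$ off the facet $f$, in particular every interior point, passes that test, so it does not place $\BF{v}_{\mathrm{int}}$ on the remaining ray. The uniqueness clause of Proposition~\ref{prop:construct-external} tells you there is a unique external chamber containing $f$; it does not hand you its internal ray ``for free.''

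What is missing is the reverse inclusion $\tilde{\gamma}\subseteq\pos(\BF{e}_1+\BF{e}_2,\dots,\BF{e}_1+\BF{e}_m,\BF{v}_{\mathrm{int}})$. The paper closes this by computing the defining inequalities of each $\sigma_{j,k}$ (namely $d_i\ge0$ for $i\ne1,j,k$, $\sum_{i\ge2}d_i\ge d_1$, and $d_1+d_j\ge\sum_{i\ge2,\,i\ne j}d_i$, $d_1+d_k\ge\sum_{i\ge2,\,i\ne k}d_i$), intersecting them to get the H-description of $\tilde{\gamma}$, and observing that these are exactly the inequalities cutting out the stated V-cone. You could also finish along your lines by noting that $\BF{v}_{\mathrm{int}}$ satisfies all $m-1$ inequalities $d_1+d_l\ge\sum_{i\ge2,\,i\ne l}d_i$ with equality, hence lies on the intersection of the $m-1$ facets of $\tilde{\gamma}$ other than $f$, which is precisely the internal ray --- but either way the H-description computation is unavoidable, and it is the one substantive calculation in the proof. (Two smaller points: your justification that $G_m$ has at least two chambers via $\binom{m}{2}>m$ only works for $m\ge4$; for $m=3$ the matrix is square, there is a single chamber, and one is in the degenerate situation of Remark~\ref{rem:degenerate}. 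Also, the paper proves externality of the columns $\BF{e}_1+\BF{e}_i$ directly from the definition rather than through Proposition~\ref{prop:construct-external}, but your route via the facet certificate is equally valid.)
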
 

\begin{proof}
The saturation of the second set in $\lattice(G_m)$ follows similarly to the proof in Example~\ref{eg:multigraph}.

We now prove that $\gamma$ is an external chamber of $G_m$.
The vector $\BF{e}_1 + \BF{e}_i$ is a column of $G_m$ corresponding to the edge $(1, i)$ of the complete graph $K_m$. Additionally, every other column $\BF{c}$ must have a non-zero entry $c_j$ for some $j \notin \{1, i\}$. Therefore, $\BF{e}_1 + \BF{e}_i$ is not in the cone generated by the other $\binom{m}{2} - 1$ columns of $G_m$, and is thus an external ray generator. The cone $f$ generated by $\{\BF{e}_1 + \BF{e}_i : 2 \leq i \leq m\}$ is the $(n-1)$-dimensional intersection of the cone $\pos(G_m)$ with the hyperplane $d_1 - \sum_{i=2}^{m} d_i$, and is thus a facet of $\pos(G_m)$. No other columns of $G_m$ appear in the facet $f$, so $f$ is an external facet. By Proposition \ref{prop:construct-external}, the external chamber $\tilde{\gamma}$ containing $f$ is the intersection of all simplicial cones of $G_m$ containing each of the columns $\BF{e}_1 + \BF{e}_i$ for $2 \leq i \leq m$. For $j,k$ with $1 < j < k < m$, set $\sigma_{j,k} := \pos(f, \BF{e}_j + \BF{e}_k)$ where we overload notation by denoting the ray generators of $f$ by $f$. Then 
\begin{equation*}
\tilde{\gamma} = \bigcap_{1 < j < k \leq m} \sigma_{j,k}.
\end{equation*}

We now prove that $\gamma = \tilde{\gamma}$.

The cone $\sigma_{j,k}$ is defined by the inequalities
\begin{align*}
d_i &\geq 0 \text{ for all } i \neq 1,j,k\\
\sum_{i=2}^{m} d_i &\geq d_1 \\
d_1 + d_j &\geq \sum_{i=2, i \neq j}^{m} d_i \\
d_1 + d_k &\geq \sum_{i=2, i \neq k}^{m} d_i 
\end{align*}
and so $\tilde{\gamma}$ is defined by the inequalities
\begin{align}
\sum_{i=2}^{m} d_i &\geq d_1 \label{eq:sum-rest} \\
d_1 + d_l &\geq \sum_{i=2, i \neq l}^{m} d_i  \text{ for all } 2 \leq l \leq m  \label{eq:one-plus}
\end{align}
where the inequalities $d_i \geq 0$ for $2 \leq i \leq m$ follow implicilty from Inequalities~\eqref{eq:sum-rest} and \eqref{eq:one-plus}.
This is the same set of inequalities defining $\gamma$. 
\end{proof}

The inequality defining the external facet $f$ is $\sum_{i=2}^{m} d_i \geq d_1$. Thus we see that for column $k$ corresponding to the vector $\BF{e}_i + \BF{e}_j$ ($1 \leq i < j \leq m$) the entry $(1, k)$ of $M_{\gamma}^{\vee}G_m$ is
\begin{equation*}
(-1, 1, \dots, 1) \cdot (\BF{e}_i + \BF{e}_j) = \begin{cases}
0 \text{ if } i = 1 \\
2 \text{ if } i \neq 1
\end{cases}
\end{equation*}
and so by Theorem \ref{theo:external-chamber-binom},
\begin{equation*}
p_ {G_m}(\BF{d}) = \binom{\frac{-d_1 + d_2 + \dots + d_m}{2} + \binom{m -1}{2} - 1}{\binom{m -1}{2} - 1}
\end{equation*}
on the chamber defined by Inequalities \eqref{eq:sum-rest} and \eqref{eq:one-plus}. 

This concludes our geometric proof of Theorem \ref{theo:multigraph}.

\section{Proof of Lemma \ref{lem:dim-reduction}} \label{sec:lem-proof}
\subsection{Statement and notation} \label{subsec:lem-proof-notation}
We begin by recalling Lemma \ref{lem:dim-reduction}, written here as Lemma \ref{lem:dim-reduction-appendix} below. 

\begin{lem}  \label{lem:dim-reduction-appendix}
Let $\gamma$ be a chamber of $A$. Assume without loss of generality that the external columns of $A$ in $\gamma$ are $\BF{a}_1, \dots, \BF{a}_{\ell}$ for some $\ell \in \{0,\dots, d-1\}$. Also assume that $\BF{a}_i = k_i\BF{e}_i$ for each $i \in \{1, \dots, \ell\}$ and some positive integers $k_1, \dots, k_{\ell}.$ Finally assume that $\semigroup{\BF{a}_1, \dots, \BF{a}_{\ell}}$ is saturated in $\lattice(A)$.
Let $B$ be the matrix obtained by removing the first $\ell$ rows and columns of $A$. 
Then there exists a chamber $\gamma'$ of $B$ such that
\begin{equation} \label{eq:vpf-equivalence} 
p^{\gamma}_A(\BF{b}) = p_{B}^{\gamma'}(b_{\ell+1}, \dots, b_d) 
\end{equation}
for all $\BF{b} = (b_1, \dots, b_n) \in \semigroup{A} \cap \gamma$.
\end{lem}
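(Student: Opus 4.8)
The plan is to exploit the block structure forced by the normalization $\BF{a}_i = k_i\BF{e}_i$ ($i=1,\dots,\ell$): then $A$ is block‑triangular, with top‑left $\ell\times\ell$ block $D := \operatorname{diag}(k_1,\dots,k_\ell)$, zero bottom‑left $(d-\ell)\times\ell$ block, bottom‑right block equal to $B$, and some top‑right $\ell\times(n-\ell)$ block $C$. Writing $\BF{x} = (\BF{x}',\BF{x}'')\in\mathbb{N}^\ell\times\mathbb{N}^{n-\ell}$ and $\BF{b} = (\BF{b}',\BF{b}'')$ with $\BF{b}'' = (b_{\ell+1},\dots,b_d)$, the system $A\BF{x}=\BF{b}$ is equivalent to $B\BF{x}''=\BF{b}''$ together with $D\BF{x}' = \BF{b}' - C\BF{x}''$; since $D$ is diagonal with positive entries, the latter has a (unique) solution in $\mathbb{N}^\ell$ exactly when $\BF{b}' - C\BF{x}''$ lies in $D\mathbb{N}^\ell := \{(k_1t_1,\dots,k_\ell t_\ell):t_i\in\mathbb{N}\}$. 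So the proof reduces to one claim: for $\BF{b}\in\semigroup{A}\cap\gamma$ and any $\BF{x}''\in\mathbb{N}^{n-\ell}$ with $B\BF{x}''=\BF{b}''$, the vector $\BF{u} := \BF{b}' - C\BF{x}''$ lies in $D\mathbb{N}^\ell$. Once this is known, $\BF{x}\mapsto\BF{x}''$ is a bijection between the relevant solution sets, giving $p_A(\BF{b}) = p_B(\BF{b}'')$.

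To prove the claim I would split it in two. Note $(\BF{u},\BF{0}) = \BF{b} - A(\BF{0},\BF{x}'')$ lies in $\lattice(A)$ (a difference of $\semigroup{A}$‑elements) and in $\operatorname{span}(\BF{e}_1,\dots,\BF{e}_\ell)$; so by saturation of $\semigroup{\BF{a}_1,\dots,\BF{a}_\ell}$ in $\lattice(A)$, it suffices to show $\BF{u}\geq\BF{0}$, for then $(\BF{u},\BF{0})\in\pos(\BF{a}_1,\dots,\BF{a}_\ell)\cap\lattice(A)=\semigroup{\BF{a}_1,\dots,\BF{a}_\ell}$, i.e.\ $\BF{u}\in D\mathbb{N}^\ell$. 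The nonnegativity of $\BF{u}$ is where the chamber hypothesis enters, through Lemma~\ref{lem:unique-facet}. For each $i\leq\ell$ pick an inner facet normal $\BFG{\iota}_i$ of the unique facet $f_i$ of $\gamma$ not containing $\BF{a}_i$; by Lemma~\ref{lem:unique-facet}, $H_{\BFG{\iota}_i}$ separates $\BF{a}_i$ from $\pos(A_{\cdot,\hat{\imath}})$, so $\BFG{\iota}_i\cdot\BF{a}_j\leq 0$ for all $j\neq i$ and $\BFG{\iota}_i\cdot\BF{a}_i>0$, hence $(\BFG{\iota}_i)_i>0$. I would then argue that $f_1,\dots,f_\ell$ are pairwise distinct: if $f_i=f_{i'}$ then every other facet of $\gamma$ contains both $\BF{a}_i=k_i\BF{e}_i$ and $\BF{a}_{i'}=k_{i'}\BF{e}_{i'}$, so $\gamma\cap\operatorname{span}(\BF{e}_i,\BF{e}_{i'})$ is a half‑plane and $\gamma$ contains a line, contradicting strong convexity; distinctness then gives $\BF{a}_{i'}\in f_i$, hence $(\BFG{\iota}_i)_{i'}=0$ for $i'\leq\ell$, $i'\neq i$. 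Finally, for $\BF{x}''\geq\BF{0}$ one has $\BFG{\iota}_i\cdot A(\BF{0},\BF{x}'') = \sum_{j>\ell}x''_j(\BFG{\iota}_i\cdot\BF{a}_j)\leq 0$ and $\BFG{\iota}_i\cdot\BF{b}\geq 0$ (as $\BF{b}\in\gamma$), so $(\BFG{\iota}_i)_i\,u_i = \BFG{\iota}_i\cdot(\BF{u},\BF{0})\geq 0$, giving $u_i\geq 0$. The main obstacle, I expect, is exactly this step — pinning down the sign pattern of the $\BFG{\iota}_i$ on the first $\ell$ coordinates and thereby the nonnegativity of $\BF{u}$.

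It then remains to produce the chamber $\gamma'$ and assemble the equality. Let $\pi\colon\mathbb{R}^d\to\mathbb{R}^{d-\ell}$ be the projection onto the last $d-\ell$ coordinates, so $\pi(\BF{a}_i)=\BF{0}$ for $i\leq\ell$, $\pi(\BF{a}_j)=B_{\cdot,j}$ for $j>\ell$, and $\pos(B)=\pi(\pos(A))$. Fix $\BF{b}_0$ in the relative interior of $\gamma$ and let $\gamma'$ be the intersection of all simplicial cones of $B$ containing $\pi(\BF{b}_0)$. I would show $\pi(\gamma)\subseteq\gamma'$ by checking that if $\pos(B_t)$ contains $\pi(\BF{b}_0)$ then, applying the nonnegativity statement above to a representation of $\pi(\BF{b}_0)$ inside $\pos(B_t)$, one gets $\BF{b}_0\in\pos(A_{\{1,\dots,\ell\}\cup t})$, a simplicial cone of $A$ containing $\gamma$ that projects onto $\pos(B_t)$. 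Because $\operatorname{span}(\gamma)\supseteq\operatorname{span}(\BF{e}_1,\dots,\BF{e}_\ell)$, the image $\pi(\gamma)$ is full‑dimensional, so $\gamma'$ is $(d-\ell)$‑dimensional, i.e.\ a chamber, with $\pi(\gamma^{\circ})\subseteq(\gamma')^{\circ}$. One also checks that $B$ satisfies the standing hypotheses of Section~\ref{subsec:vpf}: $\rank B = d-\ell$ from the block form, and $\ker B\cap\mathbb{R}^{n-\ell}_{\geq 0}=\{\BF{0}\}$, since a nonzero nonnegative integral kernel vector would, via the bijection above, make $p_A(\BF{b})$ infinite for some $\BF{b}\in\gamma\cap\semigroup{A}$. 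Then for $\BF{b}\in\gamma\cap\semigroup{A}$ we have $\BF{b}''=\pi(\BF{b})\in\gamma'\cap\semigroup{B}$ and
\begin{equation*}
p_A^{\gamma}(\BF{b}) = p_A(\BF{b}) = p_B(\BF{b}'') = p_B^{\gamma'}(b_{\ell+1},\dots,b_d),
\end{equation*}
using that $p_A$ agrees with $p_A^{\gamma}$ on $\gamma\cap\mathbb{Z}^d$ and $p_B$ with $p_B^{\gamma'}$ on $\gamma'\cap\mathbb{Z}^{d-\ell}$.
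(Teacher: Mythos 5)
Your proposal is correct, and it rests on the same three pillars as the paper's proof: the block-triangular form of $A$ yielding a bijection between the solution sets of $A\BF{x}=\BF{b}$ and $B\BF{x}''=\BF{b}''$, the saturation hypothesis supplying integrality (divisibility by the $k_i$) of the slack vector $\BF{u}$, and the separating hyperplanes of Lemma~\ref{lem:unique-facet} supplying its nonnegativity; your $\gamma'$ is also the paper's chamber, namely the intersection of the simplicial cones of $B$ containing the projection of an interior point of $\gamma$. The genuine difference is organizational: the paper peels off one external column at a time by induction on $\ell$, and so must verify at each step that the hypotheses (existence of the chamber, externality of the remaining columns, saturation) descend to $A_{\hat{1},\hat{1}}$ -- this occupies most of Section~\ref{sec:lem-proof}. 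You instead eliminate all $\ell$ coordinates at once, which dispenses with those descent arguments entirely, but requires the one extra step you correctly identified and supplied: the facets $f_1,\dots,f_\ell$ are pairwise distinct (each $f_{i'}$ being the \emph{unique} facet omitting $\BF{a}_{i'}$, so any other facet, in particular $f_i$, contains $\BF{a}_{i'}$), whence $(\BFG{\iota}_i)_{i'}=0$ for $i'\le\ell$, $i'\ne i$, and the inequality $\BFG{\iota}_i\cdot(\BF{u},\BF{0})\ge 0$ genuinely isolates $u_i$. The induction spares the paper this sign-pattern analysis since only one external coordinate is live at a time; your one-shot version is arguably shorter overall. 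Both arguments are complete.
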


To prove this lemma, we proceed by induction on $\ell$. The base case $\ell = 0$ is clear. We assume henceforth that $\ell \geq 1$. For the inductive step there are two things that we need to show (labelled [\textsf{C}] and [\textsf{V}] below):
\begin{enumerate}
    \item[{[\textsf{C}]}] The matrix $A_{\hat{1}, \hat{1}}$ also satisfies the hypotheses of Lemma \ref{lem:dim-reduction}, that is:
    \begin{enumerate}
        \item there is a chamber $\gamma'$ of $A_{\hat{1}, \hat{1}}$ which is simplicial. Furthermore, for all $\BF{b} \in \gamma$, it follows that $\BF{b}_{\hat{1}} \in \gamma'$,
        \item the columns $\{(\BF{a}_2)_{\hat{1}}, \dots, (\BF{a}_{\ell})_{\hat{1}}\}$ are external columns of $A_{\hat{1}, \hat{1}}$,
        \item the affine semigroup $\semigroup{(\BF{a}_2)_{\hat{1}}, \dots, (\BF{a}_{\ell})_{\hat{1}}}$ is saturated in $\lattice(A_{\hat{1}, \hat{1}})$.
    \end{enumerate}
    
    \item[{[\textsf{V}]}] The vector partition function of $A_{\hat{1}, \hat{1}}$ respects Eq.~\eqref{eq:vpf-equivalence}, that is: $p_A^{\gamma}(\BF{b}) = p^{\gamma'}_{A_{\hat{1}, \hat{1}}}(\BF{b}_{\hat{1}})$ for all $\BF{b} \in \semigroup{A} \cap \gamma$. \label{item:V}
\end{enumerate}

By proving [\textsf{C}] and [\textsf{V}], we show that we can iteratively remove the $\ell$ rows and columns of $A$ corresponding to the external columns of $\gamma$.

Having outlined our plan, we begin by introducing some notation. 

Throughout this appendix, we assume that $\gamma$ is a simplicial chamber of $A$. We additionally assume that $\gamma$ has external columns $\BF{a}_1, \dots, \BF{a}_{\ell}$ satisfying $\BF{a}_i = k_i\BF{e}_i$ for some positive integers $k_1, \dots, k_{\ell}$, and that the affine semigroup $\semigroup{\BF{a}_1, \dots, \BF{a}_{\ell}}$ is saturated in $\lattice(A)$.

We also use $\tilde{A}$ to denote the matrix $A_{\hat{1}, \hat{1}}$, and we overload notation by denoting the columns of $\tilde{A}$ by $\tilde{\BF{a}}_2, \dots, \tilde{\BF{a}}_n$ (in order to keep the indexing consistent between $A$ and $\tilde{A}$). Similarly for a subset $\tilde{s} \subseteq \{2, \dots, n\}$, we write $\tilde{A}_{\tilde{s}}$ to indicate the submatrix of $\tilde{A}$ whose columns are $\{\tilde{\BF{a}}_i : i \in \tilde{s}\}$. Finally, by $B$, we denote the matrix obtained by removing the first $\ell$ rows and columns of $A$. We write the general form of the matrices $A$ and $\tilde{A}$ below: 

\begin{align*}
A = 
    \begin{blockarray}{ccccccc}
        \BF{a}_1 & \BF{a}_2 & \dots & \BF{a}_{\ell} & \BF{a}_{\ell+1} & \dots & \BF{a}_n \\
        \begin{block}{(cccc|ccc)}
		k_1 &  & &  & \BAmulticolumn{3}{c}{\multirow{4}{*}{\Huge $*$}}   \\
		  & k_2 &  &  \\
            & & \ddots & \\
            & &  & k_{\ell}  \\ 
            \cline{1-7} 
            \BAmulticolumn{4}{c|}{\multirow{3}{*}{\Large 0}} & \BAmulticolumn{3}{c}{\multirow{3}{*}{\Large B}} \\
            & & & & & &  \\
            & & & & & &  \\
        \end{block} 
    \end{blockarray} 
& \ & \ &
\tilde{A} = 
    \begin{blockarray}{cccccc}
        \tilde{\BF{a}}_2 & \dots & \tilde{\BF{a}}_{\ell} & \tilde{\BF{a}}_{\ell+1} & \dots & \tilde{\BF{a}}_n \\
        \begin{block}{(ccc|ccc)}
		k_2 &  &  & \BAmulticolumn{3}{c}{\multirow{3}{*}{\Huge $*$}}\\
              & \ddots & \\
             &   & k_{\ell}  \\ 
            \cline{1-6} 
            \BAmulticolumn{3}{c|}{\multirow{3}{*}{\Large 0}} & \BAmulticolumn{3}{c}{\multirow{3}{*}{\Large B}} \\
            & & & & &  \\
            & & & & &  \\
        \end{block} 
    \end{blockarray}. 
\end{align*}

\subsection{Proof of [{\textsf{C}]}} \label{subsec:proof-of-[c]}
\textbf{The simplicial cones of \texorpdfstring{$\tilde{A}$}{A tilde}}

We now prove some results which relate the simplicial cones of $A$ with those of $\tilde{A}$ with a view towards proving the conditions of [\textsf{C}]. 

Recall that $\pos(A_s)$ is a simplicial cone of $A$ if and only if $|s| = \rank(A_s) = d$. 

\begin{prop} \label{prop:simplicial-simplicial}
    Let $\tilde{s} \subseteq \{2, \dots, n\}$, and let $s := \{1\} \cup \tilde{s}$. Then $\pos(A_s)$ is a simplicial cone of $A$ if and only if $\pos(\tilde{A}_{\tilde{s}})$ is a simplicial cone of $\tilde{A}$.
\end{prop}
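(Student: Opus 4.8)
The plan is to reduce both ``simplicial cone'' conditions to the (non)vanishing of a single determinant, exploiting the standing assumption of this section that $\BF{a}_1 = k_1\BF{e}_1$ with $k_1$ a positive integer.

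\emph{Cardinality bookkeeping.} Since $1 \notin \tilde{s}$, we have $|s| = |\tilde{s}| + 1$, so $|s| = d$ if and only if $|\tilde{s}| = d-1$. A simplicial cone of $A$ requires $|s| = \rank(A_s) = d$, hence $|s| = d$; and a simplicial cone of $\tilde{A}$ requires $|\tilde{s}| = \rank(\tilde{A}_{\tilde{s}}) = \rank(\tilde{A}) = d-1$, hence $|\tilde{s}| = d-1$. Here one records that $\rank(\tilde{A}) = d-1$: since the first column of $A$ is $k_1\BF{e}_1$, rows $2, \dots, d$ of $A$ have a zero entry in column $1$, and as they are linearly independent in $A$ they remain independent after deleting column $1$, i.e.\ as rows of $\tilde{A}$. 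Consequently, if $|s| \neq d$ then neither $\pos(A_s)$ nor $\pos(\tilde{A}_{\tilde{s}})$ is simplicial and the asserted equivalence holds vacuously; so it suffices to treat the case $|s| = d$, $|\tilde{s}| = d-1$.

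\emph{Determinant identity.} In this case $A_s$ is $d \times d$ and $\tilde{A}_{\tilde{s}}$ is $(d-1)\times(d-1)$, so each cone is simplicial precisely when the corresponding matrix has nonzero determinant. Ordering the columns of $A_s$ by increasing index, its first column is $\BF{a}_1 = k_1\BF{e}_1$; cofactor expansion along this column, together with the observation that deleting the first row and first column of $A_s$ leaves exactly $\tilde{A}_{\tilde{s}}$, yields $\det(A_s) = k_1\det(\tilde{A}_{\tilde{s}})$. Since $k_1 > 0$, we conclude that $\rank(A_s) = d$ if and only if $\rank(\tilde{A}_{\tilde{s}}) = d-1$, which combined with the cardinality equivalence proves the proposition.

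I do not anticipate a genuine obstacle: the argument is essentially one block-triangular determinant computation. The only points requiring care are that the whole reduction rests on the section's standing hypotheses (in particular $\BF{a}_1 = k_1\BF{e}_1$ with $k_1>0$), and that recording $\rank(\tilde{A}) = d-1$ is what lets the degenerate cardinality cases be handled uniformly.
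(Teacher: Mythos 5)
Your proof is correct and is essentially the paper's argument in expanded form: the paper's entire proof is the observation that $\rank(A_s) = \rank(\tilde{A}_{\tilde{s}}) + 1$, which is exactly what your block-triangular/cofactor computation (together with the cardinality bookkeeping) establishes.
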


\begin{proof}
    This result follows immediately from the observation that $\rank(A_s) = \rank(\tilde{A}_{\tilde{s}})~+~1$.
\end{proof}

In the following proposition we use $\tilde{\BF{b}}$ to denote $\BF{b}_{\hat{1}}$.

\begin{prop} \label{prop:simplicial-b} 
    Let $\BF{b} \in \gamma$. Let $\pos(\tilde{A}_{\tilde{s}})$ be a simplicial cone of $\tilde{A}$ for some subset $\tilde{s}~\subseteq~\{2,\dots,n\}$, and let $s := \{1\} \cup \tilde{s}$. Then $\BF{b} \in \pos(A_s)$ if and only if $\tilde{\BF{b}} \in \pos(\tilde{A}_{\tilde{s}})$. 
\end{prop}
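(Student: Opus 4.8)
The plan is to prove the two implications separately. The forward implication is immediate from the block form of $A$: if $\BF{b}\in\pos(A_s)$, write $\BF{b} = \lambda_1\BF{a}_1 + \sum_{i\in\tilde{s}}\lambda_i\BF{a}_i$ with $\lambda_1,\lambda_i\geq 0$; since $\BF{a}_1 = k_1\BF{e}_1$ we have $(\BF{a}_1)_{\hat{1}} = \BF{0}$, so deleting the first coordinate gives $\tilde{\BF{b}} = \sum_{i\in\tilde{s}}\lambda_i\tilde{\BF{a}}_i\in\pos(\tilde{A}_{\tilde{s}})$. I would point out that this direction uses neither $\BF{b}\in\gamma$ nor that $\pos(\tilde{A}_{\tilde{s}})$ is simplicial.

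For the reverse implication, suppose $\tilde{\BF{b}} = \sum_{i\in\tilde{s}}\mu_i\tilde{\BF{a}}_i$ with $\mu_i\geq 0$, and set $\BF{c} := \BF{b} - \sum_{i\in\tilde{s}}\mu_i\BF{a}_i$. Because $\tilde{s}\subseteq\{2,\dots,n\}$ and each $\BF{a}_i$ agrees with $\tilde{\BF{a}}_i$ in coordinates $2,\dots,d$, the vector $\BF{c}$ has zero entries in coordinates $2,\dots,d$, i.e.\ $\BF{c} = c_1\BF{e}_1$ for some scalar $c_1$. Since $\BF{a}_1 = k_1\BF{e}_1$ with $k_1 > 0$, it then suffices to prove $c_1\geq 0$, for then $\BF{b} = (c_1/k_1)\BF{a}_1 + \sum_{i\in\tilde{s}}\mu_i\BF{a}_i\in\pos(A_s)$. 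The sign of $c_1$ is where Lemma~\ref{lem:unique-facet} enters: since $\gamma$ is a chamber containing the external column $\BF{a}_1$, there is a unique facet $f$ of $\gamma$ avoiding $\BF{a}_1$, and its supporting hyperplane $H$ separates $\BF{a}_1$ from $\pos(A_{\cdot,\hat{1}})$. Pick an inner facet normal $\BF{m}$ of $\gamma$ with $H_{\BF{m}} = H$, so $\gamma\subseteq H_{\BF{m}}^{+}$; since $\BF{a}_1\in\gamma\subseteq H_{\BF{m}}^{+}$ and $H$ separates $\BF{a}_1$ from $\pos(A_{\cdot,\hat{1}})$, the latter cone lies in the opposite halfspace, so $\BF{m}\cdot\BF{v}\leq 0$ for all $\BF{v}\in\pos(A_{\cdot,\hat{1}})$ and in particular $\BF{m}\cdot\BF{a}_j\leq 0$ for $j = 2,\dots,n$. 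Furthermore $\BF{a}_1\notin H$: since $\gamma$ is simplicial, its unique facet $f$ avoiding $\BF{a}_1$ is generated by the $d-1$ ray generators of $\gamma$ other than the one spanning the ray $\pos(\BF{a}_1)$, and $H$ is their linear span, which cannot contain $\BF{a}_1$ because the $d$ ray generators of $\gamma$ are linearly independent. Hence $\BF{m}\cdot\BF{a}_1\geq 0$ (as $\BF{a}_1\in\gamma$) and $\BF{m}\cdot\BF{a}_1\neq 0$, i.e.\ $\BF{m}\cdot\BF{a}_1 > 0$. Now $\BF{m}\cdot\BF{c} = \BF{m}\cdot\BF{b} - \sum_{i\in\tilde{s}}\mu_i(\BF{m}\cdot\BF{a}_i)\geq 0$ because $\BF{b}\in\gamma\subseteq H_{\BF{m}}^{+}$ and each $\mu_i(\BF{m}\cdot\BF{a}_i)\leq 0$; on the other hand $\BF{m}\cdot\BF{c} = c_1(\BF{m}\cdot\BF{e}_1)$ with $\BF{m}\cdot\BF{e}_1 = (\BF{m}\cdot\BF{a}_1)/k_1 > 0$, which forces $c_1\geq 0$.

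The only substantive step is the production of the functional $\BF{m}$ together with its three sign properties --- $\gamma\subseteq H_{\BF{m}}^{+}$, $\BF{m}\cdot\BF{a}_1 > 0$, and $\BF{m}\cdot\BF{a}_j\leq 0$ for $j\neq 1$ --- and all three are delivered by Lemma~\ref{lem:unique-facet} (plus the simpliciality of $\gamma$, which we already assume throughout this section); the rest is bookkeeping with the block decomposition of $A$. So I expect no real obstacle beyond invoking Lemma~\ref{lem:unique-facet} cleanly and fixing the orientation of $\BF{m}$. I would also record two small remarks: $\BF{b}\in\gamma$ is used only in the reverse implication, through $\BF{m}\cdot\BF{b}\geq 0$; and the hypothesis that $\pos(\tilde{A}_{\tilde{s}})$ be simplicial is not actually needed in this proof, serving (via Proposition~\ref{prop:simplicial-simplicial}) only to guarantee that $\pos(A_s)$ is a simplicial cone of $A$ in the surrounding argument.
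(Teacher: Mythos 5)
Your proof is correct, and it reaches the key inequality by a slightly different mechanism than the paper. Both arguments share the same skeleton: lift the nonnegative combination $\sum_{i\in\tilde{s}}\mu_i\tilde{\BF{a}}_i$ back to $\mathbb{R}^d$, observe that the discrepancy with $\BF{b}$ is a multiple of $\BF{e}_1$, and then show that multiple is nonnegative using Lemma~\ref{lem:unique-facet}. Where you differ is in that last step. The paper splits into two cases and, in the bad case ($c_1<0$ in your normalization), shows that the point $\sum_{i\in\tilde{s}}\mu_i\BF{a}_i$ lies in $\gamma\cap\pos(A_{\cdot,\hat{1}})$, hence on the unique facet $f$ of $\gamma$ not containing $\BF{a}_1$; it then invokes the face property (if a sum of two elements of a cone lies in a face, both summands do) to force $\BF{a}_1\in f$, a contradiction. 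You instead dualize: you pair everything with the inner facet normal $\BF{m}$ of $f$, using $\gamma\subseteq H_{\BF{m}}^{+}$, $\BF{m}\cdot\BF{a}_j\leq 0$ for $j\geq 2$, and $\BF{m}\cdot\BF{a}_1>0$ to read off $c_1\geq 0$ directly. This avoids both the case split and the appeal to the face-sum lemma from Fulton, at the cost of having to justify the strict inequality $\BF{m}\cdot\BF{a}_1>0$; your justification via linear independence of the ray generators works, though it is slightly roundabout --- Lemma~\ref{lem:unique-facet} already gives $\BF{a}_1\notin f=\gamma\cap H_{\BF{m}}$ while $\BF{a}_1\in\gamma$, so $\BF{a}_1\notin H_{\BF{m}}$ without invoking simpliciality. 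Your closing remarks (that $\BF{b}\in\gamma$ enters only in the reverse direction, and that simpliciality of $\pos(\tilde{A}_{\tilde{s}})$ is not used) are accurate and match the paper's proof as well.
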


\begin{proof}
    The forward direction (if $\BF{b} \in \pos(A_s)$ then $\tilde{\BF{b}} \in \pos(\tilde{A}_{\tilde{s}})$) is clear, so we prove only the reverse direction.

    Since  $\tilde{\BF{b}} \in \pos(\tilde{A}_{\tilde{s}})$, 
    \begin{equation*}
        \tilde{\BF{b}} = \sum_{i \in \tilde{s}} \lambda_i \tilde{\BF{a}}_i
    \end{equation*}
    for some $\lambda_i \geq 0$. Let 
    \begin{equation*}
        \BF{c} :=  \sum_{i \in \tilde{s}} \lambda_i\BF{a}_i.
    \end{equation*}
    There are two cases to consider.

    \textit{Case 1:} If $c_1 \leq b_1$, then 
    \begin{align*}
        \BF{b} &= (b_1 - c_1)\BF{e}_1 + \BF{c} \\
        &= \frac{b_1 - c_1}{k_1}\BF{a}_1 + \sum_{i \in \tilde{s}} \lambda_i\BF{a}_i \\
        &= \sum_{i \in s}\lambda_i\BF{a}_i & (\text{setting } \lambda_1 := \frac{b_1 - c_1}{k_1})
    \end{align*}
    and so $\BF{b} \in \pos{A_s}$.
    
    \textit{Case 2:} If $c_1 > b_1$, then
    \begin{align*}
        \BF{c} &= \BF{b} + (c_1 - b_1)\BF{e}_1 \\
        &= \BF{b} + \frac{c_1 - b_1}{k_1}\BF{a}_1
    \end{align*}
    and since $\BF{a}_1, \BF{b} \in \gamma$, it follows that $\BF{c} \in \gamma$. Moreover, by definition $\BF{c} \in \pos(A_{\hat{1}})$. Therefore, $\BF{c}$ lies on the unique facet $f$ of $\gamma$ not containing $\BF{a}_1$. Since $f$ is a face of $\gamma$, if the sum of any two vectors in $\gamma$ is in $f$, then both of those vectors must be in $f$ (see \cite[Section 1.2]{Fu93} for example). Therefore, $\BF{b}, \frac{c_1 - b_1}{k_1}\BF{a}_1 \in f$ and in particular, $\BF{a}_1 \in f$. This is a contradiction since $f$ is the unique facet of $\gamma$ not containing $\BF{a}_1$, and so Case 2 cannot occur.

    Therefore, $\BF{b} \in \pos(A_s)$ as required.
\end{proof}

\paragraph{(a) Simplicial chamber:}

Let $S$ denote the subset of $\mathcal{P}(\{1, 2, \dots, n\})$ such that $s \in S$ if and only if $\pos(A_s)$ is a simplicial cone satisfying $\gamma \subseteq \pos(A_s)$, and let $\tilde{S} := \{s \setminus \{1\} : s \in S\}.$

\begin{prop} \label{prop:chambers-for-smaller}
    The cone 
    \begin{equation} \label{eq:chamber-A-tilde}
        \tilde{\gamma} := \bigcap_{\tilde{s} \in \tilde{S}} \pos(\tilde{A}_{\tilde{s}})
    \end{equation}
    is a chamber of $\tilde{A}$. Additionally, $\BF{b} \in \gamma$ if and only if $\tilde{\BF{b}} \in \tilde{\gamma}$.
\end{prop}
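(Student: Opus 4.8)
The plan is to establish the two assertions separately. Throughout I use Propositions~\ref{prop:simplicial-simplicial} and~\ref{prop:simplicial-b} together with the elementary properties of the chamber complex recorded in Section~\ref{subsec:vpf}. One preliminary observation simplifies everything: since $\BF{a}_1$ is an external column of $A$ lying in $\gamma$, Proposition~\ref{prop:external-simplicial} forces $1 \in s$ whenever $\pos(A_s)$ is a simplicial cone with $\gamma \subseteq \pos(A_s)$. Hence every $s \in S$ contains $1$, so $\tilde{S} = \{s \setminus \{1\} : s \in S\}$ consists of genuine $(d-1)$-element subsets of $\{2,\dots,n\}$ and, by Proposition~\ref{prop:simplicial-simplicial}, each $\pos(\tilde{A}_{\tilde{s}})$ with $\tilde{s} \in \tilde{S}$ is a simplicial cone of $\tilde{A}$.

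To see that $\tilde{\gamma}$ is a chamber of $\tilde{A}$, I would fix a point $\BF{b}$ in the relative interior of $\gamma$. By the chamber-complex facts, $\gamma$ is the intersection of all simplicial cones of $A$ that contain $\BF{b}$, and a simplicial cone $\pos(A_s)$ contains $\BF{b}$ precisely when $\gamma \subseteq \pos(A_s)$ (if $\BF{b} \in \pos(A_s)$ then $\gamma \cap \pos(A_s)$ contains the interior point $\BF{b}$, so it is $d$-dimensional, whence $\gamma \subseteq \pos(A_s)$); that is, precisely when $s \in S$. Now take any $\tilde{s} \subseteq \{2,\dots,n\}$ with $\pos(\tilde{A}_{\tilde{s}})$ simplicial and set $s := \{1\} \cup \tilde{s}$. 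By Proposition~\ref{prop:simplicial-b}, $\tilde{\BF{b}} \in \pos(\tilde{A}_{\tilde{s}})$ iff $\BF{b} \in \pos(A_s)$, iff $s \in S$, iff $\tilde{s} \in \tilde{S}$. So the simplicial cones of $\tilde{A}$ containing $\tilde{\BF{b}}$ are exactly $\{\pos(\tilde{A}_{\tilde{s}}) : \tilde{s} \in \tilde{S}\}$, and therefore their intersection---which, applying the chamber-complex facts to $\tilde{A}$ and the point $\tilde{\BF{b}}$ (it lies in $\pos(\tilde{A})$ because $\BF{b} \in \pos(A)$), is a cone of the chamber complex of $\tilde{A}$---equals $\tilde{\gamma}$.

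It remains to check $\dim \tilde{\gamma} = d-1$; this also yields one direction of the equivalence. Let $\pi : \mathbb{R}^d \to \mathbb{R}^{d-1}$ be the coordinate projection $\BF{x} \mapsto \BF{x}_{\hat{1}}$. For $\BF{x} \in \gamma$ and $s \in S$ we have $\BF{x} \in \pos(A_s)$, so by Proposition~\ref{prop:simplicial-b} $\pi(\BF{x}) \in \pos(\tilde{A}_{\tilde{s}})$; ranging over $\tilde{s} \in \tilde{S}$ gives $\pi(\BF{x}) \in \tilde{\gamma}$. Thus $\pi(\gamma) \subseteq \tilde{\gamma}$, which is the implication $\BF{b} \in \gamma \Rightarrow \tilde{\BF{b}} \in \tilde{\gamma}$. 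Since $\gamma$ is $d$-dimensional it spans $\mathbb{R}^d$, so $\pi(\gamma)$ spans $\mathbb{R}^{d-1}$, giving $\dim \tilde{\gamma} \geq \dim \pi(\gamma) = d-1$; as $\tilde{\gamma} \subseteq \mathbb{R}^{d-1}$ this forces $\dim \tilde{\gamma} = d-1$, so $\tilde{\gamma}$ is a chamber of $\tilde{A}$. For the converse, $\tilde{\BF{b}} \in \tilde{\gamma}$ gives $\tilde{\BF{b}} \in \pos(\tilde{A}_{\tilde{s}})$ for all $\tilde{s} \in \tilde{S}$, and feeding this back through Proposition~\ref{prop:simplicial-b} yields $\BF{b} \in \pos(A_s)$ for all $s \in S$, so $\BF{b} \in \bigcap_{s \in S}\pos(A_s) = \gamma$.

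The step I expect to require the most care is the converse implication: Proposition~\ref{prop:simplicial-b} is proved under the hypothesis $\BF{b} \in \gamma$, and its Case~2 uses that hypothesis essentially, so ``feeding it back'' is only legitimate once $\BF{b}$ is known to lie in a suitable region---indeed the bare equivalence $\BF{b} \in \gamma \iff \tilde{\BF{b}} \in \tilde{\gamma}$ is false for unrestricted $\BF{b}$, since moving $\BF{b}$ far in the $-\BF{e}_1$ direction leaves $\gamma$ without changing $\tilde{\BF{b}}$. If this becomes an issue I would instead record the converse only as $\pi(\gamma) = \tilde{\gamma}$, proving that $\pi$ carries $\gamma^{\circ}$ into $\tilde{\gamma}^{\circ}$ (a point $\pi(\BF{b})$ with $\BF{b} \in \gamma^{\circ}$ has $\tilde{\gamma}$ as its chamber in $\tilde{A}$ by the argument above, hence cannot lie on a proper face of $\tilde{\gamma}$) and then passing to closures. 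The remaining ingredients---the correspondence between simplicial cones of $A$ through the index $1$ and those of $\tilde{A}$, and the dimension count---are routine.
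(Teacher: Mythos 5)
Your argument for the main claim follows the paper's route: fix $\BF{b} \in \gamma^{\circ}$, use Proposition~\ref{prop:simplicial-b} (together with the index correspondence $s = \{1\} \cup \tilde{s}$ from Proposition~\ref{prop:simplicial-simplicial}) to identify the simplicial cones of $\tilde{A}$ containing $\tilde{\BF{b}}$ with those indexed by $\tilde{S}$, and conclude that $\tilde{\gamma}$ is a cone of the chamber complex of $\tilde{A}$; your preliminary observation that $1 \in s$ for every $s \in S$ is a detail the paper leaves implicit. Where you genuinely diverge is the dimension count: the paper forms the ray matrix $G$ of $\gamma$, asserts that the columns of $G_{\hat{1},\hat{1}}$ are the minimal ray generators of $\tilde{\gamma}$, and compares ranks, whereas you observe that $\pi(\gamma) \subseteq \tilde{\gamma}$ and that the image of the $d$-dimensional cone $\gamma$ under the surjection $\pi$ spans $\mathbb{R}^{d-1}$. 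Your version is cleaner, since the paper's identification of the ray generators of $\tilde{\gamma}$ is essentially the claim $\tilde{\gamma} = \pi(\gamma)$ being proved. Your worry about the converse implication is also well-founded and is not an artifact of your write-up: the literal biconditional fails for unrestricted $\BF{b}$ (translating $\BF{b}$ by $-t\BF{e}_1$ eventually exits the pointed cone $\gamma$ without changing $\tilde{\BF{b}}$), and Proposition~\ref{prop:simplicial-b} cannot be ``fed back'' since its hypothesis is $\BF{b} \in \gamma$. The paper's own proof dismisses this with ``by construction,'' and its later restatement in Section~\ref{subsec:proof-of-[v]}, namely $\tilde{\gamma} = \{\BF{b}_{\hat{1}} : \BF{b} \in \gamma\}$, confirms that the intended content is exactly the surjectivity statement $\pi(\gamma) = \tilde{\gamma}$ you propose as the repair; for that, rather than arguing through interiors (where a facet of $\tilde{\gamma}$ on the boundary of $\pos(\tilde{A})$ needs separate care), it is simplest to note that $\pi(\gamma)$ is a closed polyhedral cone containing the projections of generators of every $\pos(A_s)$-decomposition, or equivalently to run the paper's ray-generator projection once the chamber property is known.
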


\begin{proof}
    Recall that we can represent $\gamma$ as the intersection of all simplicial cones of $A$ containing $\BF{b}$ for some $\BF{b} \in \gamma^{\circ}$. Consider $\tilde{\BF{b}} := \BF{b}_{\hat{1}}$. By Proposition \ref{prop:simplicial-b}, it follows that the simplicial cones of $\tilde{A}$ containing $\tilde{\BF{b}}$ are exactly the simplicial cones appearing on the right-hand side of \eqref{eq:chamber-A-tilde}. Therefore, $\tilde{\gamma}$ is a cone in the chamber complex of $\tilde{A}$. By construction, $b \in \gamma$ if and only if $\tilde{b} \in \tilde{\gamma}.$

    In order to prove that $\tilde{\gamma}$ is a chamber of $\tilde{A}$, we must show that it is $(d-1)$-dimensional. Let $G$ be the matrix whose columns are the minimal ray generators of $\gamma$ (up to column permutation) so that the first column of $G$ is $\BF{a}_1 = k_1\BF{e}_1$. Then, the columns of $G_{\hat{1}, \hat{1}}$ are exactly the minimal ray generators of $\tilde{\gamma}$. Since $\rank(G) = \rank(G_{\hat{1}, \hat{1}}) + 1$, it follows that  $\tilde{\gamma}$ is indeed $(d-1)$-dimensional, and thus a chamber of $\tilde{A}.$
\end{proof}

We note that $\tilde{\BF{a}}_2, \dots, \tilde{\BF{a}}_{\ell} \in \tilde{\gamma}$. In the next section we show that each of these columns is an external column of $\tilde{A}.$

\paragraph{(b) External columns:}

\begin{prop}
    Each of the columns $\tilde{\BF{a}}_2, \dots, \tilde{\BF{a}}_{\ell}$ are external columns of $\tilde{A}$.
\end{prop}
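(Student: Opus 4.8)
The plan is to verify the definition of external column directly, i.e.\ to show $\tilde{\BF{a}}_j \notin \pos(\tilde{A}_{\cdot, \hat{j}})$ for each fixed $j \in \{2, \dots, \ell\}$ (if $\ell \le 1$ there is nothing to prove). Suppose towards a contradiction that $\tilde{\BF{a}}_j = \sum_{i \in \{2, \dots, n\} \setminus \{j\}} \mu_i \tilde{\BF{a}}_i$ for some $\mu_i \ge 0$. The idea is to lift this relation to $\mathbb{R}^d$: set $\BF{c} := \sum_{i \in \{2, \dots, n\} \setminus \{j\}} \mu_i \BF{a}_i$, so that $\BF{c} \in \pos(A_{\cdot, \hat{1}})$ and $\BF{c}$ is a nonnegative combination of columns of $A$ none of which is $\BF{a}_j$. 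Since $\BF{c}$ and $\BF{a}_j$ have the same projection onto the coordinates $2, \dots, d$, and $(\BF{a}_j)_1 = 0$ (because $j \ge 2$ and $\BF{a}_j = k_j\BF{e}_j$), we get $\BF{c} = \BF{a}_j + c_1 \BF{e}_1 = \BF{a}_j + \tfrac{c_1}{k_1}\BF{a}_1$. I would then split on the sign of $c_1$, mirroring the case analysis in the proof of Proposition~\ref{prop:simplicial-b}.

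If $c_1 \le 0$, then $\BF{a}_j = \BF{c} - \tfrac{c_1}{k_1}\BF{a}_1$ exhibits $\BF{a}_j$ as a nonnegative combination of columns of $A$ other than $\BF{a}_j$ (the coefficient $-\tfrac{c_1}{k_1}$ of $\BF{a}_1$ is $\ge 0$), so $\BF{a}_j \in \pos(A_{\cdot, \hat{j}})$, contradicting the fact that $\BF{a}_j$ is an external column of $A$.

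If $c_1 > 0$, then $\BF{c} = \BF{a}_j + \tfrac{c_1}{k_1}\BF{a}_1$ is a nonnegative combination of $\BF{a}_j$ and $\BF{a}_1$, both of which lie in $\gamma$ by hypothesis, so $\BF{c} \in \gamma$; at the same time $\BF{c} \in \pos(A_{\cdot, \hat{1}})$. By Lemma~\ref{lem:unique-facet}, $\gamma$ has a unique facet $f$ not containing $\BF{a}_1$, and its supporting hyperplane $H = H_{\BF{m}}$ separates $\BF{a}_1$ from $\pos(A_{\cdot, \hat{1}})$. Hence $\BF{m} \cdot \BF{c} \ge 0$ (as $\BF{c} \in \gamma \subseteq H^+_{\BF{m}}$) and $\BF{m} \cdot \BF{c} \le 0$ (as $\BF{c} \in \pos(A_{\cdot, \hat{1}})$ lies on the opposite side of $H$), so $\BF{c} \in \gamma \cap H = f$. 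Since $f$ is a face of the cone $\gamma$ and $\BF{c}$ is a sum of two vectors of $\gamma$ lying in $f$, both summands lie in $f$ (see \cite[Section 1.2]{Fu93}); in particular $\tfrac{c_1}{k_1}\BF{a}_1 \in f$, and since $f$ is a cone and $c_1 > 0$ this forces $\BF{a}_1 \in f$, contradicting the choice of $f$.

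Both cases being impossible, $\tilde{\BF{a}}_j \notin \pos(\tilde{A}_{\cdot, \hat{j}})$, so $\tilde{\BF{a}}_j$ is an external column of $\tilde{A}$. I expect the only genuinely non-routine point to be the lifting step together with the sign bookkeeping on the coefficient of $\BF{a}_1$: once the offending case $c_1 > 0$ has been isolated, Lemma~\ref{lem:unique-facet} and the extremality of faces of a cone close the argument in exactly the same way as in Proposition~\ref{prop:simplicial-b}.
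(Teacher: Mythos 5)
Your proof is correct, but it takes a more direct route than the paper. The paper argues by contradiction through Proposition~\ref{prop:simplicial-b}: it first triangulates so that $\tilde{\BF{a}}_2$ lies in a simplicial cone $\pos(\tilde{A}_{s'})$ with $s'\subseteq\{3,\dots,n\}$, extends $s'$ to a full-rank set $\tilde{s}$, and then invokes that proposition (with $\BF{b}=\BF{a}_2\in\gamma$) to lift back to $\BF{a}_2\in\pos(A_s)$ with $2\notin s$, contradicting externality of $\BF{a}_2$ in $A$. You instead take an arbitrary nonnegative combination witnessing $\tilde{\BF{a}}_j\in\pos(\tilde{A}_{\cdot,\hat{j}})$, lift it by hand to $\BF{c}=\BF{a}_j+\tfrac{c_1}{k_1}\BF{a}_1$, and split on the sign of $c_1$; your Case~1 produces the same contradiction with externality of $\BF{a}_j$ in $A$, and your Case~2 is precisely Case~2 of the proof of Proposition~\ref{prop:simplicial-b} (the point $\BF{c}$ lands on the unique facet $f$ of Lemma~\ref{lem:unique-facet}, and face-extremality forces $\BF{a}_1\in f$). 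In effect you have inlined the content of Proposition~\ref{prop:simplicial-b} rather than reducing to it; this buys you a self-contained argument that avoids the triangulation and rank-extension steps, at the cost of repeating a case analysis the paper has already carried out once. Both arguments rest on the same two geometric facts (the unique-facet lemma and extremality of faces), and both are valid.
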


\begin{proof}
    We give the proof for $\tilde{\BF{a}}_2$ noting that the other cases follow similarly. 

   Assume towards a contradiction that $\tilde{\BF{a}}_2$ is not an external column of $\tilde{A}$. Since a cone can be triangulated into simplicial cones with no new ray generators, there is some subset $s' \subseteq \{3, \dots, n\}$ such that
    \begin{equation*}
    \tilde{\BF{a}}_2 \in \pos(\tilde{A}_{s'})
    \end{equation*}
    and $\{\tilde{\BF{a}_i} : i \in s'\}$ is linearly independent.
    Additionally, since $\tilde{\BF{a}}_2$ is part of a linearly dependent set, we see that $s'$ can be extended to some set $\tilde{s} \subseteq \{3, \dots, n\}$ with $\rank(\tilde{A}_{\tilde{s}}) = |\tilde{s}| = d-1$. Therefore, $\tilde{\BF{a}}_2$ is in the simplicial cone $\pos(\tilde{A}_{\tilde{s}})$ of $\tilde{A}$, and so by Proposition \ref{prop:simplicial-b}, $\BF{a}_2 \in \pos(A_s)$ where $s = \tilde{s} \cup \{1\}$. This is a contradiction since $\BF{a}_2$ is an external column. 
\end{proof}

\paragraph{(c) Saturation:}

Recall that the semigroup $\semigroup{A_s}$ is saturated in $\lattice(A)$ for some $s \subseteq \{1, 2, \dots, n\}$ if 
    \begin{equation*}
        \semigroup{A_s} = \lattice(A) \cap \pos(A_s).
    \end{equation*}
We note again that $\semigroup{\BF{a}_1, \dots, \BF{a}_{\ell}}  \subseteq  \lattice(A) \cap \pos(\BF{a}_1, \dots, \BF{a}_{\ell})$ for each choice of columns, so one only needs to show the reverse inclusion. 

\begin{prop}
    The affine semigroup $\semigroup{\tilde{\BF{a}}_2, \dots, \tilde{\BF{a}}_{\ell}}$ is saturated in $\lattice(\tilde{A})$.
\end{prop}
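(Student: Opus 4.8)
The plan is to verify directly that $\semigroup{\tilde{\BF{a}}_2, \dots, \tilde{\BF{a}}_{\ell}} = \lattice(\tilde{A}) \cap \pos(\tilde{\BF{a}}_2, \dots, \tilde{\BF{a}}_{\ell})$, and as noted just above it suffices to establish the inclusion $\supseteq$. First I would record the structural fact that drives everything: since $\BF{a}_i = k_i\BF{e}_i$ for $i = 2, \dots, \ell$, deleting the first coordinate leaves $\tilde{\BF{a}}_i = k_i\BF{e}_i$ viewed in $\mathbb{R}^{d-1}$ (with coordinates indexed $2, \dots, d$). Hence every $\tilde{\BF{v}} \in \pos(\tilde{\BF{a}}_2, \dots, \tilde{\BF{a}}_{\ell})$ has the shape $\tilde{\BF{v}} = (\lambda_2 k_2, \dots, \lambda_\ell k_\ell, 0, \dots, 0)$ with $\lambda_i \geq 0$; in particular $\tilde{\BF{v}}$ vanishes in coordinates $\ell+1, \dots, d$.

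Next, take $\tilde{\BF{v}} \in \lattice(\tilde{A}) \cap \pos(\tilde{\BF{a}}_2, \dots, \tilde{\BF{a}}_{\ell})$ and write $\tilde{\BF{v}} = \sum_{j=2}^n \mu_j \tilde{\BF{a}}_j$ with $\mu_j \in \mathbb{Z}$. Setting $\BF{v} := \sum_{j=2}^n \mu_j \BF{a}_j$ produces a vector $\BF{v} \in \lattice(A)$ with $\BF{v}_{\hat{1}} = \tilde{\BF{v}}$, so $\BF{v}$ agrees with $\tilde{\BF{v}}$ away from the first coordinate and has some first entry $v_1 \in \mathbb{Z}$. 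Choosing a nonnegative integer $c$ with $v_1 + c k_1 \geq 0$ and putting $\BF{w} := \BF{v} + c\BF{a}_1$, we still have $\BF{w} \in \lattice(A)$ because $\BF{a}_1 \in \lattice(A)$; moreover $\BF{w}$ has nonnegative entries in coordinates $1, \dots, \ell$ and zero entries in coordinates $\ell+1, \dots, d$, so $\BF{w} \in \pos(\BF{a}_1, \dots, \BF{a}_\ell)$.

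Finally I would invoke the standing hypothesis that $\semigroup{\BF{a}_1, \dots, \BF{a}_\ell}$ is saturated in $\lattice(A)$: since $\BF{w} \in \lattice(A) \cap \pos(\BF{a}_1, \dots, \BF{a}_\ell)$, saturation gives $\BF{w} \in \semigroup{\BF{a}_1, \dots, \BF{a}_\ell}$, i.e. $\BF{w} = \sum_{i=1}^\ell n_i \BF{a}_i$ with $n_i \in \mathbb{N}$. Deleting the first coordinate then yields $\tilde{\BF{v}} = \BF{w}_{\hat{1}} = \sum_{i=2}^\ell n_i \tilde{\BF{a}}_i \in \semigroup{\tilde{\BF{a}}_2, \dots, \tilde{\BF{a}}_\ell}$, which is exactly the desired inclusion. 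I do not expect a real obstacle here; the only point needing a little care is the lifting step — arranging that the lattice point $\BF{v}$ can be pushed by a multiple of $\BF{a}_1 = k_1\BF{e}_1$ into the cone $\pos(\BF{a}_1, \dots, \BF{a}_\ell)$ without disturbing the coordinates that were already zero — and this works precisely because $\BF{a}_1$ is supported only on the first coordinate.
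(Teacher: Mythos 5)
Your proof is correct and follows essentially the same route as the paper's: lift the lattice point of $\lattice(\tilde{A})$ to a point of $\lattice(A)$, translate by a sufficiently large nonnegative multiple of $\BF{a}_1 = k_1\BF{e}_1$ to land in $\pos(\BF{a}_1,\dots,\BF{a}_{\ell})$, invoke saturation of $\semigroup{\BF{a}_1,\dots,\BF{a}_{\ell}}$ in $\lattice(A)$, and project away the first coordinate. The only cosmetic difference is that you verify membership in the cone coordinate-wise, whereas the paper does so via an auxiliary nonnegative lift $\BF{c}'$; both are valid.
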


\begin{proof}
    We need to prove that $\pos(\tilde{\BF{a}}_2, \dots, \tilde{\BF{a}}_{\ell}) \cap \mathcal{L}(\tilde{A}) \subseteq \semigroup{\tilde{\BF{a}}_2, \dots, \tilde{\BF{a}}_{\ell}}$.
    
    Let $\tilde{\BF{b}} \in \pos(\tilde{\BF{a}}_2, \dots, \tilde{\BF{a}}_{\ell}) \cap \mathcal{L}(\tilde{A}).$ Since $\tilde{\BF{b}} \in \mathcal{L}(\tilde{A})$, there exist integers $d_2, \dots, d_n$ such that
    \begin{equation*}
        \tilde{\BF{b}} = \sum_{i=2}^{n} d_i \tilde{\BF{a}_i}.
    \end{equation*}
    and since $\tilde{\BF{b}} \in \pos(\tilde{\BF{a}}_2, \dots, \tilde{\BF{a}}_{\ell})$, there exist $\lambda_2, \dots, \lambda_n \geq 0$ such that
    \begin{equation*}
        \tilde{\BF{b}} = \sum_{i=2}^{n} \lambda_i \tilde{\BF{a}_i}.
    \end{equation*}
    Let 
    \begin{equation*}
        \BF{c} := \sum_{i=2}^{n} d_i \BF{a}_i \in \lattice(A),
    \end{equation*}
    and let 
    \begin{equation*}
        \BF{c}' := \sum_{i=2}^{n} \lambda_i \BF{a}_i \in \pos(\BF{a}_1, \dots, \BF{a}_{\ell}).
    \end{equation*}

    By construction, $c_2 = c'_2, \dots, c_d = c'_d.$
    Let $N$ be a positive integer satisfying $k_1N > c'_1 - c_1$, and let $\BF{b} := \BF{c} + N\BF{a}_1$, so that $b_1 = c_1 + k_1N$. Then $\BF{b} \in \lattice(A)$ since $\BF{a}_1, \BF{c} \in  \lattice(A)$ and $N$ is an integer. Additionally, 
    \begin{align*}
        \BF{b} &= \BF{c}' + (k_1N + c_1 -  c'_1)\BF{e}_1  \\
        &= \BF{c}' + \frac{k_1N + c_1 - c'_1}{k_1}\BF{a}_1
    \end{align*}
    and since $\BF{a}_1, \BF{c}' \in \pos(\BF{a}_1, \dots, \BF{a}_{\ell})$ and $\frac{k_1N + c_1 - c'_1}{k_1} > 0$, it follows that $\BF{b} \in \pos(\BF{a}_1, \dots, \BF{a}_{\ell})$.

    Since $\semigroup{\BF{a}_1, \dots, \BF{a}_{\ell}}$ is saturated in $\lattice(A)$ it follows that $\BF{b} \in \semigroup{\BF{a}_1, \dots, \BF{a}_{\ell}}$, and so 
    \begin{equation*}
        \BF{b} = m_1\BF{a}_1 + \dots + m_{\ell}\BF{a}_{\ell}
    \end{equation*}
    for some non-negative integers $m_1, \dots, m_{\ell}$. Therefore, $\tilde{\BF{b}} = m_2\tilde{\BF{a}_2} + \dots + m_{\ell}\tilde{\BF{a}}_{\ell},$ and thus $\tilde{\BF{b}} \in \pos(\tilde{\BF{a}}_2, \dots, \tilde{\BF{a}_{\ell}})$ as required.
\end{proof} 

\subsection{Proof of [{\textsf{V}]}} \label{subsec:proof-of-[v]}
Let $\tilde{\gamma}$ be defined as in \eqref{eq:chamber-A-tilde} -- equivalently
\begin{equation*}
    \tilde{\gamma} = \{\BF{b}_{\hat{1}} : \BF{b} \in \gamma\}.
\end{equation*}

Before proceeding to the proof we make a quick remark about saturation which we exploit.
If the semigroup $\semigroup{\BF{a}_1, \dots, \BF{a}_{\ell}}$ is saturated in $\lattice(A)$ and each $\BF{a}_j$ for $j=1, \dots, \ell$ is an external column, then the affine semigroups generated by the singleton sets $\{\BF{a}_j\}$ are also saturated in $\lattice(A)$. 

\begin{lem} \label{lem:chamber-dimension-lower}
Let $\BF{b} \in \semigroup{A} \cap \gamma$, and let $\tilde{\BF{b}} := \BF{b}_{\hat{1}}$. Then
\begin{equation*} 
p_A^{\gamma}(\BF{b}) = p^{\tilde{\gamma}}_{\tilde{A}}(\tilde{\BF{b}}).
\end{equation*}
\end{lem}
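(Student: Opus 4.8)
The plan is to prove the identity first at the level of the counting functions $p_A$ and $p_{\tilde{A}}$, by exhibiting an explicit bijection between the two finite sets they count, and only afterwards to pass to the chamber quasi-polynomials. Because $\BF{a}_1 = k_1\BF{e}_1$, the rows $2,\dots,d$ of the system $A\BF{x}=\BF{b}$ do not involve the variable $x_1$, and after deleting row $1$ they become exactly $\tilde{A}(x_2,\dots,x_n)^{\mathsf{T}} = \tilde{\BF{b}}$. So I would define $\Phi \colon \{\BF{x}\in\mathbb{N}^n : A\BF{x}=\BF{b}\} \to \{\tilde{\BF{x}}\in\mathbb{N}^{n-1} : \tilde{A}\tilde{\BF{x}}=\tilde{\BF{b}}\}$ by $\Phi(x_1,\dots,x_n)=(x_2,\dots,x_n)$, show it is a bijection, and conclude $p_A(\BF{b})=p_{\tilde{A}}(\tilde{\BF{b}})$. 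Since $\BF{b}\in\gamma$ forces $\tilde{\BF{b}}\in\tilde{\gamma}$ (this is built into the description of $\tilde{\gamma}$, and $\tilde{\gamma}$ is a chamber of $\tilde{A}$ by Proposition~\ref{prop:chambers-for-smaller}), and since $p_A^{\gamma}$ agrees with $p_A$ on $\gamma\cap\mathbb{Z}^d$ while $p_{\tilde{A}}^{\tilde{\gamma}}$ agrees with $p_{\tilde{A}}$ on $\tilde{\gamma}\cap\mathbb{Z}^{d-1}$, the claimed equality follows.

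Well-definedness of $\Phi$ is the observation above. Injectivity is also immediate: the first row of $A\BF{x}=\BF{b}$ reads $k_1 x_1 = b_1 - \sum_{i=2}^{n}(\BF{a}_i)_1 x_i$, so $x_1$ is determined by $(x_2,\dots,x_n)$. The real work is surjectivity: given $\tilde{\BF{x}}=(x_2,\dots,x_n)\in\mathbb{N}^{n-1}$ with $\tilde{A}\tilde{\BF{x}}=\tilde{\BF{b}}$, set $\BF{c}:=\sum_{i=2}^{n}x_i\BF{a}_i\in\semigroup{A_{\cdot,\hat{1}}}$; then $\BF{c}_{\hat{1}}=\tilde{A}\tilde{\BF{x}}=\tilde{\BF{b}}=\BF{b}_{\hat{1}}$, so $\BF{b}-\BF{c}=(b_1-c_1)\BF{e}_1$. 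It then suffices to show $(b_1-c_1)/k_1$ is a non-negative integer, for then $\BF{x}:=\big((b_1-c_1)/k_1,\,x_2,\dots,x_n\big)\in\mathbb{N}^n$ satisfies $A\BF{x}=\BF{b}$ and $\Phi(\BF{x})=\tilde{\BF{x}}$.

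Non-negativity $b_1\ge c_1$ is the one genuinely geometric point, and I would run the argument from Case~2 of the proof of Proposition~\ref{prop:simplicial-b}: if $c_1>b_1$ then $\BF{c}=\BF{b}+\tfrac{c_1-b_1}{k_1}\BF{a}_1$ is a non-negative combination of $\BF{b},\BF{a}_1\in\gamma$, hence $\BF{c}\in\gamma$; but also $\BF{c}\in\pos(A_{\cdot,\hat{1}})$, and by Lemma~\ref{lem:unique-facet} the supporting hyperplane of the unique facet $f$ of $\gamma$ not containing $\BF{a}_1$ separates $\BF{a}_1$ from $\pos(A_{\cdot,\hat{1}})$, which forces $\gamma\cap\pos(A_{\cdot,\hat{1}})\subseteq f$ and so $\BF{c}\in f$; since $f$ is a face of $\gamma$ and $\BF{c}$ is a positive combination of the elements $\BF{b}$ and $\BF{a}_1$ of $\gamma$, both must lie in $f$, contradicting $\BF{a}_1\notin f$. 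Integrality then comes from saturation: $\BF{b}-\BF{c}\in\lattice(A)$ because $\BF{b}\in\semigroup{A}\subseteq\lattice(A)$ and $\BF{c}\in\semigroup{A_{\cdot,\hat{1}}}\subseteq\lattice(A)$, while $\BF{b}-\BF{c}=\tfrac{b_1-c_1}{k_1}\BF{a}_1\in\pos(\BF{a}_1)$ by the previous step; by the remark preceding this lemma, $\semigroup{\BF{a}_1}$ is saturated in $\lattice(A)$, so $\BF{b}-\BF{c}\in\semigroup{\BF{a}_1}=\lattice(A)\cap\pos(\BF{a}_1)$, i.e.\ $b_1-c_1=m_1 k_1$ with $m_1\in\mathbb{N}$, and $x_1=m_1$ as needed.

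With $\Phi$ a bijection we obtain $p_A(\BF{b})=p_{\tilde{A}}(\tilde{\BF{b}})$ for every $\BF{b}\in\semigroup{A}\cap\gamma$, and the reduction to chamber quasi-polynomials described in the first paragraph then yields $p_A^{\gamma}(\BF{b})=p_{\tilde{A}}^{\tilde{\gamma}}(\tilde{\BF{b}})$. I expect the non-negativity step to be the only place demanding genuine care — it is precisely where the hypothesis $\BF{b}\in\gamma$ and Lemma~\ref{lem:unique-facet} enter — whereas well-definedness and injectivity of $\Phi$ are routine consequences of the block form of $A$, and integrality is a direct application of the saturation hypothesis together with the remark preceding the lemma.
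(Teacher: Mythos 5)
Your proposal is correct and follows essentially the same route as the paper: the bijection is the projection $(x_1,\dots,x_n)\mapsto(x_2,\dots,x_n)$, integrality of the recovered $x_1$ comes from saturation of $\semigroup{\BF{a}_1}$ in $\lattice(A)$, and non-negativity comes from the separating hyperplane of Lemma~\ref{lem:unique-facet}. The only cosmetic difference is that for non-negativity you reuse the face-containment argument from Case~2 of Proposition~\ref{prop:simplicial-b}, whereas the paper carries out an explicit computation with the inner facet normal $\BFG{\iota}$; both rest on the same separation statement and are equally valid.
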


\begin{proof}
    The following proof proceeds in two stages. In the first stage, we use the assumption that $\BF{b} \in \semigroup{A}$. In the second stage we use geometrical methods exploiting the fact that $\BF{b} \in \gamma$.

Denote the entry at the $r$th row and $c$th column of $A$ by $a_{r,c}$. We note that $a_{2,1} \dots, a_{n,1} = 0$, because the first column is $k_1\BF{e}_1$.

Let $\BF{b} = (b_1, \dots, b_d) \in \semigroup{A} \cap \gamma$. Consider some $\tilde{\BF{x}} = (x_2, \dots, x_n) \in \mathbb{N}^{n-1}$ satisfying 
\begin{align}
    &a_{1,2}x_2 + \dots + a_{1,n}x_n \leq b_1 \label{eq:the-ineq} \\
    &a_{2,2}x_2 + \dots + a_{2,n}x_n = b_2 \label{eq:second-eq}\\
    &\hspace{0.4cm} \vdots \hspace{2.1cm} \vdots \hspace{1cm} \vdots \notag \\
    &a_{d,2}x_2 + \dots + a_{d,n}x_n = b_d. \label{eq:last-eq}
\end{align}
For such $\tilde{\BF{x}}$, 
\begin{align*}
    \BF{u} &:= \BF{b} - x_2\BF{a}_2 - \ldots - x_n\BF{a}_n \\
    &= ((\underbrace{b_1 - (a_{1,2}x_2 + \dots + a_{1,n}x_n}_{m}), 0, \dots, 0) \\
    &= m\BF{e}_1
\end{align*}
where $m$ is a non-negative integer. So, $\BF{u} \in \pos(\BF{e}_1)$ and since $\BF{b} \in \semigroup{A} \subseteq \lattice(A)$, and $\BF{a}_2, \dots, \BF{a}_n \in \lattice(A)$, it follows that $\BF{u} \in \lattice(A)$ as well. By hypothesis 
\begin{align*}
    \lattice(A) \cap \pos(\BF{e}_1) &= \semigroup{\BF{a}_1} \\
    &= \semigroup{k_1\BF{e}_1} 
\end{align*}
hence $k_1 | m$. 
Any such $\tilde{\BF{x}}$ extends uniquely to the solution 
\begin{equation*}
    \BF{x} = \left(\frac{m}{k_1}, x_2, \dots, x_n\right) \in \mathbb{N}
\end{equation*}
of $A\BF{x} = \BF{b}$. We note that this is the unique choice for $x_1$, because
\[
A\BF{x} = \begin{bmatrix}
    k_1x_1 + b_1 - m \\
    b_2 \\
    \vdots \\
    b_d
\end{bmatrix}
\]
and so $k_1x_1 + b_1 - m = b_1$ implying that $x_1 = m/k_1$.
 The previous argument describes an injective map from the set of solutions $\BF{x'} \in \mathbb{N}^{d-1}$ satisfying Lines \eqref{eq:the-ineq}--\eqref{eq:last-eq} to the set of solutions $\BF{x} \in \mathbb{N}^d$ of $A\BF{x} = \BF{b}$. This map is clearly a bijection since the inverse map is the projection $\BF{x} = (x_1, \dots, x_n) \mapsto (x_2, \dots, x_n)$. 

 We now show that the conditions $\BF{b} \in \gamma$ and $A_{\hat{1}, \cdot}\BF{x} = \BF{b}_{\hat{1}}$ are sufficient to imply Inequality~\eqref{eq:the-ineq}. The second of these assumptions will be used to obtain Eq.~\eqref{eq:other-eqs-hold} from Eq.~\eqref{eq:dot-prod}. Equations \eqref{eq:second-eq} -- \eqref{eq:last-eq} are equivalent to $A_{\hat{1}, \cdot}\BF{x} = \BF{b}_{\hat{1}}$ since $a_{2,1}, \dots, a_{n,1} = 0$.

By Lemma \ref{lem:unique-facet}, $\gamma$ has a unique facet $f = \gamma \cap H$ not containing $k_1\BF{e}_1$, where $H$ is a supporting hyperplane of $\gamma$ separating $\BF{a}_1$ from $\pos(A_{\cdot, \hat{1}})$. Let  $\BFG{\iota}$ denote an inner normal of $H$ with respect to the cone $\gamma$. Since $k_1\BF{e}_1 \in H^+$, we have that $\BFG{\iota} \cdot (k_1\BF{e}_1) > 0$ and so $\iota_1 > 0$. By definition, we also have that $\BFG{\iota}^T\BF{b} \geq 0$ for all $\BF{b} \in \gamma$, and since $H$ separates $\BF{a}_1$ from $\pos(A_{\cdot, \hat{1}})$, we find that $\pos(A_{\cdot, \hat{1}}) \subseteq H^-$ and so $\BFG{\iota}^TA_{\cdot, \hat{1}} \BF{x}_{\hat{1}, \cdot}\leq 0$. 
 
 We have 
\begin{align}
\BFG{\iota}^T\BF{b} &=  \iota_1b_1 + \BFG{\iota}_{\hat{1}} \cdot \BF{b}_{\hat{1}} \label{eq:dot-prod} \\
&= \iota_1b_1 + \BFG{\iota}_{\hat{1}}^TA_{\hat{1},\cdot}\BF{x} \label{eq:other-eqs-hold} \\
&= \iota_1b_1 + \BFG{\iota}^TA_{\cdot, \hat{1}}x_{\hat{1}} - (\iota_1a_{1,2}x_2 + \dots + \iota_1a_{1,n}x_n) \label{eq:introduce-Ahat} \\
&=\iota_1\biggr(b_1 - (a_{1,2}x_2 + \dots + a_{1,n}x_n)\biggr) + \BFG{\iota}^TA_{\cdot, \hat{1}}\BF{x}_{\hat{1}} \label{eq:with-b1}
\end{align} 
where Eq.\eqref{eq:introduce-Ahat} follows since $a_{2,1} = \dots = a_{d,1} = 0$ as the first column of $A$ is $k_1\BF{e}_1$. Rearranging Eq.~\eqref{eq:with-b1} to solve for $b_1$, we have
\begin{align*}
b_1  &= \frac{\BFG{\iota}^T\BF{b} - \BFG{\iota}^TA_{\cdot, \hat{1}}\BF{x}_{\hat{1}}}{\iota_1} + (a_{1,2}x_2 + \dots + a_{1,n}x_n) \\
&\geq a_{1,2}x_2 + \dots + a_{1,n}x_n.
\end{align*}

Therefore, we see that Equations \eqref{eq:second-eq} -- \eqref{eq:last-eq} do indeed imply Inequality \eqref{eq:the-ineq}. Since $\BF{x'}$ satisfying Inequality \eqref{eq:the-ineq} and Equations \eqref{eq:second-eq} -- \eqref{eq:last-eq} extends to a solution $\BF{x}$ of $A\BF{x} = \BF{b}$, we find that for $\BF{b} \in \gamma \cap \semigroup{A}$, the sets $\{\BF{x} \in \mathbb{N}^{n} : A\BF{x} = \BF{b}\}$ and $\{\BF{x} \in \mathbb{N}^{n} : A_{\hat{1}, \cdot}\BF{x} = \BF{b}_{\hat{1}}\}$ are equal. Therefore, $p_A(\BF{b}) = p_{A_{\hat{1}, \cdot}}(\BF{b}_{\hat{1}})$. Finally since the first column of $A_{\hat{1}, \cdot}$ is all zeroes, we have that $p_{A_{\hat{1}, \cdot}}(\BF{b}_{\hat{1}}) = p_{A_{\hat{1}, \hat{1}}}(\BF{b}_{\hat{1}})$, and so 
\[
p_A(\BF{b}) = p_{A_{\hat{1}, \hat{1}}}(\BF{b}_{\hat{1}}) = p^{\tilde{\gamma}}_{\tilde{A}}(\tilde{\BF{b}})
\]
completing the proof.
\end{proof}

Now that we have proven [\textsf{C}] and [\textsf{V}], we see that we can indeed iteratively remove the first $\ell$ rows and columns of $A$. This concludes our proof of Lemma \ref{lem:dim-reduction}.

\section{Semi-external chambers} \label{sec:lin-factors}

In this section we consider a generalization of external chambers -- semi-external chambers. The main motivation for this is to examine the appearance of linear factors for polynomials associated to such chambers (under some additional conditions). Such a result (due to Baldoni and Vergne \cite[Corollary 14]{BaVe08}) exists in the unimodular case, but we have observed that linear factors appear in a more general setting. In Conjecture~\ref{conj:lin-factors} we give a possible generalization (which notably also generalizes Theorem~\ref{theo:external-chamber-binom} in the $\beta = 1$ case). If this conjecture is valid, it would allow us not only to compute linear factors in the multigraph enumeration case described in Section~\ref{sec:multigraph}, but also possibly of Littlewood-Richardson polynomials (see Section~\ref{sec:future} for more details). 

\begin{defn}
For a $d \times n$ matrix $A$ of rank $d$, we define a chamber of $A$ to be \emph{semi-external} if it intersects a facet of $\pos(A)$ $(d-1)$-dimensionally. 
\end{defn}

\begin{eg}
Recall the matrix $K_3$ and its chamber complex given in Example \ref{eg:kostant-eg1}. The cone $\pos(K_3)$ has three facets. They are $f_1 := \pos(\BF{a}_1, \BF{a}_2, \BF{a}_4),\ f_2 := \pos(\BF{a}_1, \BF{a}_6),\ f_3 := \pos(\BF{a}_4, \BF{a}_5, \BF{a}_6)$. 
Chambers $\gamma_3$ and $\gamma_6$ are the chambers of $K_3$ that meet $f_1$ $2$-dimensionally, $\gamma_5$ is the chamber that meets $f_2$ $2$-dimensionally, and $\gamma_1$ and $\gamma_4$ are the chambers that meet $f_3$ $2$-dimensionally. Therefore, of the seven chambers of $K_3$, Chambers $\gamma_1, \gamma_3, \gamma_4, \gamma_5, \gamma_6$ are semi-external, and the rest are not. It may help the reader to examine the projection of the chamber complex of $K_3$ given in Figure \ref{fig:kostant-chambers1}. Pictorally, the semi-external chambers here correspond to the $2$-dimensional regions bounded by edges (the projections of the chambers of $K_3$) that intersect the outer triangle (the projection of $\pos(K_3)$) $1$-dimensionally (since we are projecting $2$-dimensional intersections). 
\end{eg}

External chambers of $A$ are exactly the semi-external chambers $A$ that meet a simplicial facet of $\pos(A)$ $(d-1)$-dimensionally. Unlike external chambers, semi-external chambers of $A$ always exist -- in fact, there must be at least as many semi-external chambers as there are facets of $\pos(A).$

\begin{theo}[Baldoni, Vergne, 2008 \cite{BaVe08}] \label{theo:lin-factors}
Let $A$ be a $d \times n$ unimodular matrix of rank~$d$, let $f$ be a facet of $\pos(A)$ with minimal inner facet normal $\BFG{\iota}$, and let $\gamma$ be a semi-external chamber of $A$ intersecting $f$ $(d-1)$-dimensionally. Let $k$ be the number of columns of $A$ not in $f$. Then $p_{A}^{\gamma}(\BF{y})$ has linear factors 
\begin{equation*}
    (\BFG{\iota} \cdot \BF{y}) + i
\end{equation*}
for $i=1,\dots,k-1$. 
\end{theo}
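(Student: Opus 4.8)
The plan is to obtain the linear factors from the combinatorial (Ehrhart--Macdonald) reciprocity law for vector partition functions, using Lemma~\ref{lem:column-dot-product} to pin down the roots. Since $A$ is unimodular, $p_A^{\gamma}$ is an honest polynomial of degree $n-d$ by Theorem~\ref{theo:unimodular}, so asking for its linear factors is the same as asking where it vanishes, and we may freely evaluate it at ``negative'' arguments. First I would record the reciprocity: for $\BF{y}$ in the interior $\gamma^{\circ}$ the polytope $P_{\BF{y}} := \{\BF{x}\in\mathbb{R}^n_{\geq 0}: A\BF{x}=\BF{y}\}$ is $(n-d)$-dimensional with every constraint $x_j\geq 0$ facet-defining, so its relative interior is $\{\BF{x}\in\mathbb{R}^n_{>0}:A\BF{x}=\BF{y}\}$, and chamberwise reciprocity gives
\begin{equation*}
(-1)^{\,n-d}\,p_A^{\gamma}(-\BF{y}) \;=\; \#\{\BF{x}\in\mathbb{Z}^n_{>0}: A\BF{x}=\BF{y}\}
\qquad\text{for all } \BF{y}\in\gamma^{\circ}\cap\mathbb{Z}^d .
\end{equation*}

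Next I would locate the roots. Let $k$ be the number of columns of $A$ not on $f$. By Lemma~\ref{lem:column-dot-product}, $\BFG{\iota}\cdot\BF{a}_j=0$ for $\BF{a}_j\in f$ and $\BFG{\iota}\cdot\BF{a}_j=1$ for $\BF{a}_j\notin f$; hence for any solution $A\BF{x}=\BF{y}$ with $\BF{x}\in\mathbb{Z}^n_{>0}$ we get $\BFG{\iota}\cdot\BF{y}=\sum_{\BF{a}_j\notin f}x_j\geq k$, because each of the $k$ coordinates attached to off-facet columns is at least $1$. Consequently $\#\{\BF{x}\in\mathbb{Z}^n_{>0}:A\BF{x}=\BF{y}\}=0$ whenever $\BFG{\iota}\cdot\BF{y}\in\{1,\dots,k-1\}$, so by the displayed identity $p_A^{\gamma}(-\BF{y})=0$ for every such $\BF{y}$ in $\gamma^{\circ}\cap\mathbb{Z}^d$.

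Finally I would upgrade these vanishings to polynomial divisibility. Fix $j\in\{1,\dots,k-1\}$ and let $H_j:=\{\BF{y}:\BFG{\iota}\cdot\BF{y}=j\}$, a rational affine $(d-1)$-plane. Because $\gamma\subseteq\pos(A)\subseteq H_{\BFG{\iota}}^{+}$, the face $\gamma\cap\{\BFG{\iota}=0\}$ equals $\gamma\cap f$, which by hypothesis is $(d-1)$-dimensional, hence a facet of $\gamma$; since $\gamma$ is $d$-dimensional this forces $\gamma^{\circ}\cap H_1\neq\emptyset$ (rescale any interior point), and $\gamma^{\circ}\cap H_j=j\,(\gamma^{\circ}\cap H_1)$ is a non-empty relatively open subset of $H_j$ that contains a full-dimensional rational subcone (add to a rational interior point on $H_1$ the translate by $j$ of a small rational subcone of the facet $\gamma\cap f$). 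Therefore $\gamma^{\circ}\cap H_j\cap\mathbb{Z}^d$ is Zariski-dense in $H_j$ (note $\mathbb{Z}^d=\lattice(A)$ since $A$ is unimodular, and $\gcd$ of the entries of $\BFG{\iota}$ is $1$, so $H_j\cap\mathbb{Z}^d$ is a full-rank affine sublattice of $H_j$). The polynomial $\BF{y}\mapsto p_A^{\gamma}(-\BF{y})$ vanishes on this dense set, hence identically on $H_j$, so $\BFG{\iota}\cdot\BF{y}-j$ divides $p_A^{\gamma}(-\BF{y})$, i.e.\ $\BFG{\iota}\cdot\BF{y}+j$ divides $p_A^{\gamma}(\BF{y})$. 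Letting $j$ run over $1,\dots,k-1$ and noting these linear forms are pairwise non-proportional gives all the asserted factors; as a check, in the external-chamber case $k=n-d+1$ this recovers the product $\prod_{i=1}^{n-d}(\BFG{\iota}\cdot\BF{y}+i)$ dividing the binomial coefficient of Corollary~\ref{cor:unimodular}.

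The main obstacle is Step~1: one must cite or reprove the chamberwise reciprocity identity in exactly the stated form and, crucially, argue that it is valid throughout $\gamma^{\circ}$ even near the facet $f$ — the right-hand side is only \emph{locally} polynomial in $\BF{y}$, so it cannot simply be rewritten as $p_A^{\gamma}(\BF{y}-\sum_j\BF{a}_j)$ for the $\BF{y}$ we care about, and the reciprocity must be invoked as an identity between a polynomial and a genuine lattice-point count on $\gamma^{\circ}\cap\mathbb{Z}^d$. A secondary, more elementary point is the Zariski-density claim of Step~3, which does genuinely use the full-dimensional intersection hypothesis. If one preferred to avoid reciprocity, an alternative is induction on $k$ via the deletion recursion $p_{A_{\cdot,\hat n}}(\BF{y})=p_A(\BF{y})-p_A(\BF{y}-\BF{a}_n)$ for a column $\BF{a}_n\notin f$, but then one has to control which chamber of $A_{\cdot,\hat n}$ contains each translate $\BF{y}-t\BF{a}_n$, which is the awkward part.
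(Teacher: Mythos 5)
The paper does not prove this statement at all: Theorem \ref{theo:lin-factors} is quoted from Baldoni--Vergne \cite{BaVe08} as a known result, so there is no in-paper argument to compare against. Your proof is, as far as I can check, correct, and it is a clean route: the combination of reciprocity, the dot-product computation $\BFG{\iota}\cdot\BF{y}=\sum_{\BF{a}_j\notin f}x_j\geq k$ from Lemma \ref{lem:column-dot-product}, and Zariski density of the lattice points in the unbounded slice $\gamma^{\circ}\cap H_j$ (unbounded precisely because the recession cone of the slice is the $(d-1)$-dimensional face $\gamma\cap f$ --- this is where the semi-externality hypothesis enters) does yield divisibility by each $\BFG{\iota}\cdot\BF{y}+i$, and the factors are pairwise non-associate. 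One comment on the obstacle you flag in Step 1: you do not actually need a chamber-wise reciprocity theorem in full strength. For each fixed $\BF{y}\in\gamma^{\circ}\cap\mathbb{Z}^d$ the function $t\mapsto p_A^{\gamma}(t\BF{y})$ \emph{is} the Ehrhart (quasi-)polynomial of $P_{\BF{y}}$ (as recalled in Section \ref{subsec:vpf}), so ordinary Ehrhart--Macdonald reciprocity applied to that single polytope, evaluated at $t=1$, gives $(-1)^{n-d}p_A^{\gamma}(-\BF{y})=\#\operatorname{relint}(P_{\BF{y}})\cap\mathbb{Z}^n$; your observation that $\BF{y}\in\gamma^{\circ}\subseteq\pos(A)^{\circ}$ forces no coordinate $x_j$ to vanish identically on $P_{\BF{y}}$ (so the relative interior is exactly $\{\BF{x}>\BF{0}:A\BF{x}=\BF{y}\}$ and $\dim P_{\BF{y}}=n-d$) then closes the gap. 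So the argument is complete pointwise on $\gamma^{\circ}\cap\mathbb{Z}^d$, which is all Step 3 uses. The only cosmetic issue is that your statement of the theorem's conclusion and your degree check ($k-1\leq n-d$ since $f$ carries at least $d-1$ columns) could be made explicit, but nothing is missing in substance.
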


As previously stated, we suspect that Theorem \ref{theo:lin-factors} can be generalized since we have found evidence of linear factors appearing in non-unimodular cases.
In particular, we suspect that the key is the dot product condition of Lemma \ref{lem:column-dot-product}. 
Motivated by this, we make the following conjecture.

\begin{conj} \label{conj:lin-factors}
    Let $A$ be a $d \times n$ matrix of rank $d$ with integer entries, let $f$ be a facet of $\pos(A)$ with inner facet normal $\BFG{\iota}$, and let $\gamma$ be a semi-external chamber of $A$ intersecting $\gamma$ $(d-1)$-dimensionally.
    Assume that $\BFG{\iota} \cdot \BF{c} = 1$ for each column $\BF{c}$ of $A$ not on $f$ and that $p_A^{\gamma}(\BF{y})$ is a polynomial. Let $k$ be the number of columns of $A$ not on $f$. Then $p_{A}^{\gamma}(\BF{y})$ has linear factors 
\begin{equation*}
    (\BFG{\iota} \cdot \BF{y}) + i
\end{equation*}
for $i=1,\dots,k-1$  for all chambers of $A$ intersecting $f$ $(d-1)$-dimensionally.
\end{conj}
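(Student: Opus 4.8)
The plan is to reduce the conjecture, via Theorem~\ref{theo:external-col-variables}-style machinery, to a statement about a lower-dimensional vector partition function attached to a single row, and then analyze the resulting one-row coin-exchange problem directly. First I would set $\BF{a}_1,\dots,\BF{a}_{d-1}$ to be columns of $A$ spanning $f$ (these exist because $\gamma$ meets $f$ in dimension $d-1$, so $f$ must contain at least $d-1$ linearly independent columns), noting that unlike in the external case these need not be \emph{all} the columns on $f$, nor external. The key structural input is that $\gamma$ has a unique facet lying on $f$; I would want an analogue of Lemma~\ref{lem:unique-facet} for semi-external chambers, which should follow from the same argument since any facet of $\gamma$ not on $f$ is forced to be cut out by a simplicial cone of $A$ whose defining column is shared with $\gamma$. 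Then, applying a dual-ray-matrix change of variables as in Theorem~\ref{theo:external-col-variables} (using a simplicial cone $\sigma \supseteq \gamma$ whose first $d-1$ ray generators point along $\BF{a}_1,\dots,\BF{a}_{d-1}$), I would aim to peel off $d-1$ variables and land on $p_A^\gamma(\BF{y}) = p_B^{\gamma'}\big((M\BF{y})_d\big)$ where $B$ is a $1\times k$ matrix whose entries are exactly $\BFG{\iota}\cdot\BF{c}$ over the columns $\BF{c}$ of $A$ not on $f$ — and by hypothesis every such entry equals $1$.

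Once we are in the one-row world, the claim becomes: if $p_B$ (for $B = [1,1,\dots,1]$, $k$ ones) is the relevant quasi-polynomial, then $p_A^\gamma(\BF{y})$, as a function of $z := \BFG{\iota}\cdot\BF{y}$, equals $p_B(z) = \binom{z+k-1}{k-1} = \tfrac{1}{(k-1)!}(z+1)(z+2)\cdots(z+k-1)$, which manifestly has the asserted linear factors $(\BFG{\iota}\cdot\BF{y})+i$ for $i=1,\dots,k-1$. This is precisely Lemma~\ref{lem:coin-exchange} in the $\beta=1$ case, so the heart of the argument is getting to the reduction $p_A^\gamma(\BF{y}) = p_B(\BFG{\iota}\cdot\BF{y})$. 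To identify the argument of $p_B$ as $\BFG{\iota}\cdot\BF{y}$ I would use, exactly as in the proof of Theorem~\ref{theo:external-chamber-binom}, that the last row of the dual ray matrix $M_{\sigma^\vee}$ is $\BFG{\iota}^T$ (it is the minimal inner normal of the unique facet of $\gamma$ on $f$), so $(M\BF{y})_d = \BFG{\iota}\cdot\BF{y}$, and that $\BFG{\iota}\cdot\BF{a}_i=0$ for $i=1,\dots,d-1$ since $\BF{a}_i\in f\subseteq H_{\BFG{\iota}}$.

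The main obstacle — and the reason this is only a conjecture — is the saturation hypothesis that is present in Theorem~\ref{theo:external-col-variables} and Lemma~\ref{lem:dim-reduction} but \emph{absent} from the conjecture's statement: those results require $\semigroup{\BF{a}_1,\dots,\BF{a}_{d-1}}$ to be saturated in $\lattice(A)$, and for a general semi-external chamber there may be no choice of $d-1$ columns on $f$ whose affine semigroup is saturated. Without saturation the reduction can introduce periodicity, so $p_A^\gamma$ need not literally equal $p_B(\BFG{\iota}\cdot\BF{y})$ on the nose; the polynomiality hypothesis $p_A^\gamma(\BF{y})$ being a polynomial is meant to compensate, but bridging that gap rigorously — showing that \emph{whenever} the chamber quasi-polynomial happens to be a genuine polynomial, it must coincide with the negative binomial coefficient in $\BFG{\iota}\cdot\BF{y}$ — is exactly what I do not see how to push through, and would be the crux of any complete proof. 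A secondary subtlety is that $f$ may contain more than $d-1$ columns of $A$, so the one-row matrix $B$ might \emph{also} inherit some zero entries (columns on $f$ that are not among the chosen $\BF{a}_i$ contribute $\BFG{\iota}\cdot\BF{c}=0$); one would need to check these zero columns simply do not affect $p_B$ and that the count $k$ of nonzero entries is the stated number of columns off $f$.
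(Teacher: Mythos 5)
This statement is a \emph{conjecture} in the paper: the author offers only numerical evidence (the $G_6$ and $A^{2,3}$ examples) and explicitly leaves a proof open, so there is no proof of record to compare your attempt against. Your proposal does not close the conjecture either, and the obstruction is more severe than the saturation issue you flag at the end.

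The core problem is that the dimension-reduction machinery (Theorem \ref{theo:external-col-variables} via Lemma \ref{lem:dim-reduction}) only peels off variables corresponding to \emph{external columns of $A$ that lie in $\gamma$}. For a general semi-external chamber, the columns $\BF{a}_1,\dots,\BF{a}_{d-1}$ you choose on $f$ need not lie in $\gamma$ at all (the facet $\gamma\cap f$ is $(d-1)$-dimensional, but its ray generators need not be columns of $A$), need not be external columns, and the number of external columns actually contained in $\gamma$ is typically smaller than $d-1$. So the reduction cannot land on a $1\times k$ matrix $B$; the reduced vector partition function retains more than one row, and $p_A^\gamma$ genuinely depends on more than the single linear form $\BFG{\iota}\cdot\BF{y}$. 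Indeed, your proposed endpoint $p_A^\gamma(\BF{y})=p_B(\BFG{\iota}\cdot\BF{y})=\binom{\BFG{\iota}\cdot\BF{y}+k-1}{k-1}$ is contradicted by the paper's own $A^{2,3}$ example: there $n-d=8$ and $k=6$, and the chamber polynomial is
\[
\frac{(b_1+1)\cdots(b_1+5)\,(-b_1+b_2+2)\,(9b_1^2-14b_1b_2+7b_2^2-16b_1+28b_2+21)}{5040},
\]
which has the five predicted linear factors but also a degree-$3$ factor in other variables, so it is not a univariate polynomial in $\BFG{\iota}\cdot\BF{y}$ of degree $k-1$. The conjecture asserts only \emph{divisibility} by the factors $(\BFG{\iota}\cdot\BF{y})+i$, not equality with a negative binomial coefficient; that equality holds precisely in the external-chamber case, where $k-1=n-d$ matches the total degree (this is Theorem \ref{theo:external-chamber-binom}). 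Any proof of the general conjecture must therefore explain why the reduced, still-multivariate quasi-polynomial is divisible by these specific linear forms, which is exactly the part your reduction-to-one-row strategy cannot reach.
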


We now given an example (among many that we have found) which provide evidence for Conjecture \ref{conj:lin-factors}.

\begin{eg}
    Recall the chamber $\gamma$ of $G_6$ given in Example \ref{eg:main-theorem}. This ($6$-dimensional) chamber is semi-external as its intersection with the facet $f$ of $\pos(G_6)$ with minimal inner normal $\BFG{\iota} = (0, 0, 0, 0, 0, 1)$ has dimension $5$. There are exactly five columns of $G_6$ not on $f$ (these columns correspond to the vertex pairs containing $v_6$). 
    In this case we see that on $\semigroup{G_6}$ the polynomial $p_{G_6}^{\gamma}$ does indeed have the linear factors
    \begin{equation}
        d_6 + 1, \dots, d_6 + 4.
    \end{equation}
\end{eg}

\begin{eg}
    Consider the matrix 
    \begin{equation*}
    A^{2,3} := \begin{bmatrix}
1 & 0 & 0 & 1 & 0 & 0 & 0 & 1 & 1 & 1 & 1 \\
0 & 1 & 0 & 0 & 1 & 1 & 1 & 1 & 1 & 2 & 2 \\
0 & 0 & 1 & 1 & 1 & 1 & 2 & 1 & 2 & 2 & 3 
    \end{bmatrix}.
    \end{equation*}
    The chamber $\gamma$ with minimal ray generators 
    \begin{equation*}
        \begin{bmatrix}
            0 \\ 
            0 \\
            1
        \end{bmatrix},
        \begin{bmatrix}
            0 \\ 
            1 \\
            2
        \end{bmatrix},
        \begin{bmatrix}
            1 \\ 
            2 \\
            5
        \end{bmatrix}
    \end{equation*}
    is semi-external as it $2$-dimensionally intersects the facet $f$ of $\pos(A^{2,3})$ with minimal inner normal 
    \begin{equation*}
         \BFG{\iota} := (1,0,0).
    \end{equation*}
    The dot product condition of Conjecture \ref{conj:lin-factors} is satisfied in this case since $\iota \cdot \BF{c} =1$ for each column $\BF{c}$ of $A^{2,3}$ which is not on $f$ (these are columns $1,4,8,9,10,11$, so $k=6$). Additionally, $p_A^{\gamma}(\BF{b})$ is polynomial, as we have verified with \emph{Barvinok}, by computing that 
    \begin{equation*}
        p_A^{\gamma}(\BF{b}) = \frac{(b_1 + 1)(b_1 + 2)(b_1 + 3) (b_1 + 4)(b_1 + 5)(-b_1 + b_2 + 2)(9b_1^2 - 14b_1b_2 + 7b_2^2 - 16b_1 + 28b_2 + 21)}{5040}.
    \end{equation*}
    Therefore, the conditions of Conjecture \ref{conj:lin-factors} are satisfied, and we see that the linear factors $b_1 + 1, \dots, b_1 + 5$ do indeed appear in $p_A^{\gamma}(\BF{b})$. Finally, we remark that this matrix $A^{2,3}$ is in the aforementioned family of matrices $A^{m,n}$ related to the Kronecker coefficients, and is studied in \cite{MiTr22}. 
\end{eg}

We remark that Theorem \ref{theo:external-chamber-binom} gives a characterization (up to saturation) of when $p_A^{\gamma}$ is polynomial in the case that $\gamma$ is an external chamber. It is natural to ask if such a characterization exists for semi-external chambers.

We conclude this section by remarking that the result of Baldoni and Vergne (Theorem \ref{theo:unimodular} in this article) can be used to simplify one of our previous proofs. 

\begin{rem}
Corollary \ref{cor:unimodular} can also be derived from the linear factor result of Baldoni and Vergne (our Theorem \ref{theo:lin-factors}), since in this case Theorem \ref{theo:lin-factors} predicts the linear factors 
\begin{equation*}
    \BFG{\iota}~\cdot~\BF{b}~+~1, \dots, \BFG{\iota}~\cdot~\BF{b}~+~n-d-1.
\end{equation*}
The constant can then be computed by observing that $p_A^{\gamma}(\BF{0}) = 1$.
\end{rem}

\section{Future work} \label{sec:future}

\subsection{Beyond vector partition functions}
There are many functions which are piecewise quasi-polynomial and whose pieces are chambers of fans, but which are not (directly) vector partition functions. However, it seems that a more generalized version of the determinantal formula given in Theorem \ref{theo:ehrhart-det} holds in certain settings. 

We describe the general set-up explicitly now. We note that the function we are describing in this setting has been described as \emph{vector partition-like function} by Briand, Rosas, and Orellana~\cite[Definition 1]{BrOrRo09}, so we shall also use this nomenclature. A function $F$ is \emph{vector partition-like} if it is a piecewise quasi-polynomial whose pieces are chambers of a fan~$\Gamma$. We say that a ray $R$ (1-dimensional cone) of $\Gamma$ is an \emph{$F$-external ray} if $h(tR)$ is a degree $0$ quasi-polynomial in $t$. If this is not the case, we say that $R$ is \emph{$F$-internal}. Likewise, we call any generator of an $F$-external ray (resp. $F$-interal ray), an \emph{$F$-external ray generator} (resp. \emph{$F$-interal ray generator}).
Moreover, we say that a chamber $\gamma$ of $\Gamma$ is an \emph{$F$-external chamber} if all but one ray of $\gamma$ is $F$-external.
Although it may now be clear what the generalized determinantal formula should look like, we record it here for the sake of clarity. 

Let $\gamma$ be an $F$-external chamber with $F$-external ray generators $\BF{v}_1, \dots, \BF{v}_{d-1}$, and $F$-internal ray generator $\BF{v}_{d}$. The determinantal formula in this case would yield:

\begin{equation*}
    F^{\gamma}(\BF{b}) = h\left(\frac{\det(\BF{v}_1, \dots, \BF{v}_{d-1}, \BF{b})}{\det(\BF{v}_1, \BF{v}_2, \dots, \BF{v}_{d})}\right)
\end{equation*}
where $h(t) := F(t\BF{v}_d).$

Of course, this will not hold for all vector partition-like functions $F$. However, we now describe two cases in which this determinantal formula does seem to hold. For these examples it will be useful to introduce the notation $\mathcal{P}_k$ to denote the set of partitions of length at most $k$. Finally, we remark that both of the vector partition function-like functions that we consider in these examples can be written via formulations involving vector partition functions. For these formulations, we direct the reader to \cite{Ra04} and \cite{BrOrRo09}.

\paragraph{\large{Littlewood-Richardson coefficients:}} The \emph{Littlewood-Richardson} (LR) coefficients $c_{\lambda, \mu}^{\nu}$ are the structure constants appearing from the ordinary multiplication of Schur functions. Due to Rassart \cite{Ra04}, it is known that the \emph{Littlewood-Richardson} function $\Phi_k : \mathcal{P}_k \times \mathcal{P}_k \times \mathcal{P}_k \to \mathbb{Z}_{\geq 0}$ described by
\begin{equation*}
    \Phi_k(\lambda, \mu, \nu) = c_{\lambda, \mu}^{\nu}
\end{equation*}
is a vector partition-like function. Moreover, $\Phi_k$ is piecewise polynomial (each of the quasi-polynomials have period $1$ and are thus polynomials). 
We call the fan whose chambers are the domains of polynomiality of $\Phi_k$, $\lrfan{k}.$

The LR function $\Phi_3$ is described explicitly in \cite{Ra04}. In this case, each of the 18 chambers is $\Phi_3$-external and it is shown in \cite{BrRoTr23} that the determinantal formula holds in this case. However, the proof in this case heavily depends on the fact that $\Phi_3$ is a degree $1$ polynomial. 

The LR function $\Phi_4$ has not been computed explicitly (only partially), however the corresponding fan $\lrfan{4}$ has been computed in \cite{BrRoTr20}. The function $\Phi_4$ is a degree $3$ piecewise polynomial, and the fan $\lrfan{4}$ has $67769$ chambers and $515$ rays. We now give an example below, illustrating that the determinantal formula also applies to $\Phi_4$.

\begin{eg} \label{eg:lr-binomial}
Consider the facet $f$ of $\lrcone{4}$ defined by the equation
\begin{equation*}
    \lambda_1 + \lambda_3 + \mu_1 + \mu_2 = \nu_1 + \nu_3
\end{equation*}
and thus having minimal inner normal 
\begin{equation*}
    \BFG{\iota} := (1, 0, 1, 0, 1, 1, 0, 0, -1, 0, -1, 0).
\end{equation*}
The facet $f$ is contained in the chamber $\kappa_{67709}$ of $\mathcal{L}\mathcal{R}_4$ with minimal ray generators
\begin{alignat*}{8}
\BF{v}_1 &:= \left(0, 0, 0, 0, 1, 0, 0, 0, 1, 0, 0\right) & \quad &
\BF{v}_2 &:= \left(0, 0, 0, 0, 1, 1, 1, 0, 1, 1, 1\right) & \quad &
\BF{v}_3 &:= \left(0, 0, 0, 0, 1, 1, 1, 1, 1, 1, 1\right) \\
\BF{v}_4 &:= \left(1, 0, 0, 0, 0, 0, 0, 0, 1, 0, 0\right) & \quad &
\BF{v}_5 &:= \left(1, 1, 0, 0, 0, 0, 0, 0, 1, 1, 0\right) & \quad &
\BF{v}_6 &:= \left(1, 1, 0, 0, 1, 0, 0, 0, 1, 1, 1\right) \\
\BF{v}_7 &:= \left(1, 1, 0, 0, 1, 1, 0, 0, 2, 1, 1\right) & \quad &
\BF{v}_8 &:= \left(1, 1, 0, 0, 1, 1, 1, 0, 2, 1, 1\right) & \quad &
\BF{v}_9 &:= \left(1, 1, 1, 0, 0, 0, 0, 0, 1, 1, 1\right) \\
\BF{v}_{10} &:= \left(1, 1, 1, 1, 0, 0, 0, 0, 1, 1, 1\right) & \quad &
\BF{v}_{11} &:= \left(4, 3, 1, 0, 3, 2, 1, 0, 6, 4, 3\right) & & &
\end{alignat*}
of which each is $\Phi_4$-external ray generator with the exception of $\left(4, 3, 1, 0, 3, 2, 1, 0, 6, 4, 3\right)$, and so $\kappa_{67709}$ is a $\Phi_4$-external chamber. We have computed by interpolation that the polynomial $\Phi_4^{\kappa_{67709}}$ is 
\begin{equation*}
\Phi_4^{\kappa_{67709}}(\lambda, \mu, \nu) = \binom{\lambda_1 + \lambda_3 + \mu_1 + \mu_2 - \nu_1 - \nu_3 + 3}{3}.
\end{equation*}
Additionally, we have computed (also by interpolation) that $h(t) := \Phi_4^{\kappa_{67709}}(t\BF{v}_{11}) = \binom{t+3}{3}$. Finally, we checked that 
\begin{equation*}
   \frac{\det(\BF{v}_1, \dots, \BF{v}_{10}, \BF{b})}{\det(\BF{v}_1, \BF{v}_2, \dots, \BF{v}_{11})}
\end{equation*}
is indeed $\lambda_1 + \lambda_3 + \mu_1 + \mu_2 - \nu_1 - \nu_3$ where $\BF{b} = (\lambda | \mu | \nu)$ is the vector whose coordinates are the parts of $\lambda, \mu, \nu$.
\end{eg}

In all, there are $21$ $\Phi_4$-external chambers of $\lrfan{4}$. We have verified that the determinantal formula does indeed apply in each of these cases.

\paragraph{\large{Kronecker coefficients:}} \
The \emph{Kronecker coefficients} $g_{\lambda, \mu, \nu}$ are the structure constants in the decomposition of a tensor product of irreducible representations of the symmetric group into irreducible representations:
\begin{equation*}
	V_{\mu} \otimes V_{\nu} = \bigoplus\limits_{\lambda} g_{\lambda, \mu, \nu} V_{\lambda}.
\end{equation*}
Due to a result of Meinrenken and Sjamaar \cite{MeSj99}, it is known that the \emph{Kronecker function} $G_{m,n} : \mathcal{P}_{mn} \otimes \mathcal{P}_m \otimes \mathcal{P}_{n} \to \mathbb{Z}_{\geq 0}$ defined by $G_{m,n}(\lambda, \mu, \nu) = g_{\lambda, \mu, \nu}$ is a vector partition-function. 

The Kronecker function $G_{2,2}$ (i.e the $m=n=2$ case) has been described explicitly by Briand, Rosas, and Orellana \cite{BrOrRo09}. In this case there are $74$ chambers, each of dimension $5$, and the degree of the piecewise quasi-polynomial $G_{2,2}$ is $2.$ The function $G_{2,2}$ is described implicitly, as a simpler function auxiliary function (which we shall call $G^*_{2,2}$) is described, from which $G_{2,2}$ may be retrieved (see \cite[Remark 2]{BrOrRo09}). 

The function  $G^*_{2,2} : \mathbb{Z}^5 \to \mathbb{Z}_{\geq 0}$ (the name of this function is our own) is defined via $G_{2,2}$ by the relation
\begin{equation}
    G^*_{2,2}(n, r, s, g_1, g_2) := G_{2,2}(n - g_1 - g_2, g_1, g_2, n-r, r, n - s, s).
\end{equation}
We now show that the determinantal formula does indeed hold for $G^*_{2,2}$, and thus also for $G_{2,2}$.

\begin{eg}
Consider the chamber $\gamma_{1}$ (numbering as given in the Sage notebook \emph{The chamber complex and quasipolynomial formulas for the 3,2,2-Kronecker coefficients}, Briand 2022) of $\gamma_{62}$ of $G^*_{2,2}$ with ray generators
\begin{align*}
\BF{v}_1 := \left(3, 1, 1, 1, 1\right), \ 
\BF{v}_2 := \left(1, 0, 0, 0, 0\right), \
\BF{v}_3 := \left(2,1,0,1,0\right), \
\BF{v}_4 := \left(2,0,1,1,0\right), \
\BF{v}_5 := \left(6,2,2,2,1\right).
\end{align*}

Each of $\BF{v}_1, \dots, \BF{v}_4$ are $G^*_{2,2}$-external ray generators since $G^*_{2,2}(t\BF{v_i}) = 1$ for each $i=1,2,3,4$. Also, $\BF{v_5}$ is not a $G^*_{2,2}$-external ray generator since 
\begin{align*}
    f &:= G^*_{2,2}(t\BF{v_5}) \\
    &= \begin{cases}
        (t+1)^2 & \text{ if } t \equiv 0 \mod 2 \\
        (t+2)^2/4 & \text{ if } t \equiv 1 \mod 2.
    \end{cases} 
\end{align*}
Therefore, $\gamma_{62}$ is $G^*_{2,2}$-external, and so our determinantal formula predicts that
\begin{align*}
    {G^*_{2,2}}^{\gamma_{62}}(n, r, s, g_1, g_2) &= h\left(\frac{\det(\BF{v}_1, \dots, \BF{v}_4, \BF{b})}{\det(\BF{v}_1, \dots, \BF{v}_4, \BF{v}_5)}\right) \\
    &= \begin{cases}
        (r + s - g_1 - g_2 +1)^2 & \text{ if } r + s - g_1 - g_2 \equiv 0 \mod 2\\
        \frac{(r + s - g_1 - g_2 +1)(r + s - g_1 - g_2 +3)}{4} & \text{ if } r + s - g_1 - g_2 \equiv 1 \mod 2
    \end{cases}
\end{align*}
where $\BF{b} = (n, r, s, g_1, g_2)$. This is indeed confirmed by the computations made in the aforementioned notebook. 
\end{eg}

In all, the function $G^*_{2,2}$ has $7$ external chambers ($\gamma_1, \gamma_{59}, \gamma_{61, }\gamma_{62}, \gamma_{67}, \gamma_{69}, \gamma_{70}, \gamma_{72}$). The determinantal formula holds in each case. 

Interestingly, in some of these cases, the quasi-polynomial is polynomial, and (reflecting again our work in the vector partition case) is given by a negative binomial coefficient. We reproduce one of these cases below. 

\begin{eg}
Consider the chamber $\gamma_{62}$ of $G^*_{2,2}$ with ray generators
\begin{align*}
\BF{v}_1 := \left(3, 1, 1, 1, 1\right), \ 
\BF{v}_2 := \left(1, 0, 0, 0, 0\right), \
\BF{v}_3 :=\left(4, 1, 2, 1, 1\right), \
\BF{v}_4 := \left(2, 0, 1, 1, 0\right), \
\BF{v}_5 := \left(10, 3, 4, 3, 2\right).
\end{align*}
Each of $\BF{v}_1, \dots, \BF{v}_4$ are $G^*_{2,2}$-external ray generators since $G^*_{2,2}(t\BF{v_i}) = 1$ for each $i=1,2,3,4$. Also, $\BF{v_5}$ is not a $G^*_{2,2}$-external ray generator since $h := G^*_{2,2}(t\BF{v_5}) = \binom{t + 2}{2}$. Therefore, $\gamma_{62}$ is $G^*_{2,2}$-external, and so our determinantal formula predicts that
\begin{align*}
    {G^*_{2,2}}^{\gamma_{62}}(n, r, s, g_1, g_2) &= h\left(\frac{\det(\BF{v}_1, \dots, \BF{v}_4, \BF{b})}{\det(\BF{v}_1, \dots, \BF{v}_4, \BF{v}_5)}\right) \\
    &= \frac{(r - g_2 + 2)(r - g_2 + 1)}{2}
\end{align*}
where $\BF{b} = (n, r, s, g_1, g_2)$. Again, we have confirmed the computations by checking against Briand's \emph{Sagemath} notebook. 
\end{eg}

\paragraph{\large{Summary:}}

The cases shown in this section constitute only isolated examples in which the determinantal formula seems to hold in a more general setting. There are several natural questions which one may ask.

\begin{que}
    What are the necessary conditions on a vector partition-like function $F$ so that the determinantal formula holds?
\end{que}

\begin{que}
    Is there a generalization of Theorem \ref{theo:external-chamber-binom} in this setting? In other words, is there a characterization (up to lattice) of when $F^{\gamma}$ is polynomial for an external chamber $\gamma$? In this case, is $F^{\gamma}$ given by a negative binomial coefficient?
\end{que}

\begin{que}
    Is there a generalization of Theorem \ref{theo:external-col-variables} that holds in this setting (i.e are we able to reduce dimension for certain chambers in a more general setting than vector partition functions)?
\end{que}

\subsection{Linear factors}

The main result that we would like to obtain here is a proof of Conjecture \ref{conj:lin-factors} (or a similar result, generalizing that of Baldoni and Vergne for the unimodular case). We formulate this below as a question.

\begin{que}
    Let $f$ be a facet of $\pos(A)$ containing $k$ columns of $A$ for some $d-1 \leq k \leq n$, and let $\iota$ be the inner facet normal of $f$. Let $\gamma$ be a semi-external chamber of $A$ with a facet contained in $f$. What conditions must be placed on $A$ and $\gamma$ such that 
    \begin{enumerate}
        \item $p_A^{\gamma}$ is polynomial, and
        \item $\iota \cdot \BF{y} + 1, \dots, \iota \cdot \BF{y} + n - k - 1$ each appear as linear factors of $p_A^{\gamma}(\BF{y})$?
    \end{enumerate}
\end{que}

 In the case that Conjecture \ref{conj:lin-factors} holds, it may be interesting to see if one can compute linear factors for the matrices $G_m$ of Section \ref{sec:multigraph} (i.e associated to the enumeration of multigraphs). As it is not difficult to compute the facets of $\pos(G_m)$ for all $m$, one could then compute the associated linear factors and see what combinatorial information may be derived from such an analysis.

Finally, we remark that linear factors have been observed in the vector partition-like function setting as well. For example, for LR coefficients \cite{KiToTo07}, as well as for Kronecker coefficients\footnote{From Baldoni, Vergne, Walter \cite{BaVeWa17}: ``We summarize our results in Table 3. We find that, remarkably, the symbolic function on $\mathfrak{c}_{v_{F_I}}$
is polynomial, instead of merely quasipolynomial
(first row). It is a striking fact that this polynomial function
is divisible by 7 linear factors with constant values 1, 2, 3, 4, 5, 6, 7 on the
face $F_I$.''}. It would thus be interesting to explore the appearance of linear factors in the vector partition-like function case as well.

\subsection{Persistent chambers}
Recall that in Section \ref{sec:multigraph} we are able to produce an external chamber $\gamma^{(m)}$ for each number of vertices $m$. This sequence of external chambers $\gamma^{(m)}$ form a family whose inequalities are each determined by a single map depending only on $m$ (i.e given only $m$ we can compute the inequalities defining $\gamma^{(m)}$). Let us (somewhat informally) call such a sequence of chambers $(\gamma^{(m)})$ \emph{persistent}. An interesting avenue of potential research would be to study persistent chambers related to other combinatorial classes that can be viewed through the vector partition function lens. In the multigraph enumeration proof, the vector partition function perspective yields the correct inequalities to look at, but the combinatorial proof is simpler than the geometrical one. It would be a very interesting result to find a combinatorial class for which the geometrical proof is also simpler than the combinatorial one. 

\subsection{Bounds on the number of chambers}
The problem of computing the number of chambers of a given matrix $A$ is a surprisingly difficult one. For example, the matrices $K_m$ associated to Kostant's partition function are easy to describe, however, the corresponding number of chambers is only known for the first seven cases.

Given the number of ``walls'' $m$ of the chamber complex (i.e hyperplanes defining codimension $1$ cones of the chamber complex), and the dimension $d$, there are polynomial upper bounds on the number of chambers of $A$ from the theory of hyperplane arrangements.  Namely, it is known that the number of chambers is at most 
\begin{equation}
    \binom{m}{0} + \binom{m}{1} + \dots + \binom{m}{d}
\end{equation}
(see for example \cite[Lemma 3.3]{VeWo08}). 

A consequence of our work in proving Lemma \ref{lem:dim-reduction} (specifically Proposition \ref{prop:chambers-for-smaller}) allows us to bound the number of chambers of $A$ from below. Assume without loss of generality that $\BF{a}_1$ is an external chamber of~$A$. By possibly applying an appropriate linear map $M$, we can assume that $\BF{a}_1 = k_1\BF{e}_1$ for some positive integer $k_1$. Then, we find that 
    \begin{equation*}
        \# \text{ chambers of } A \text{ containing } \BF{a}_1 = \# \text{ chambers of } A_{\hat{1}, \hat{1}}
    \end{equation*}
    and since 
    \begin{equation*}
        \# \text{ chambers of } A \text{ not containing } \BF{a}_1 \geq \# \text{ chambers of } A_{\cdot, \hat{1}}
    \end{equation*}
    we can derive the following inequality
    \begin{equation*}
        \# \text{ chambers of } A \geq \# \text{ chambers of } A_{\hat{1}, \hat{1}} + \text{ chambers of } A_{\cdot, \hat{1}}.
    \end{equation*}
It may be interesting to compute the lower bounds that are obtained by applying this process recursively. 

\section*{Acknowledgements}
I would like to thank Matthias Beck for suggesting that Corollary \ref{cor:unimodular} should be true via the approach of Lemma \ref{lem:coin-exchange}. I would also like to thank Nathan Ilten for pointing out that one need not assume $\gamma$ is simplicial in Theorem \ref{theo:external-col-variables}. Additionally,
I would like to thank Marni Mishna, Nathan Ilten, Alejandro Morales, Emmanuel Briand, and Matthias Beck for numerous edits and suggestions. Moreover, I would like to thank Jes\'us de Loera for providing me with data for Kostant's partition function.
Finally, I would like to thank Mercedes Rosas and Sheila Sundaram (and Marni Mishna once again) -- Example 2 of their article~\cite{MiRoSu21} provided the inspiration for our proof of Theorem \ref{theo:external-col-variables}.

The author benefited from the financial support of the
National Science and Engineering Research Council (NSERC) of Canada, via NSERC Discovery Grant
R611453, and also Universidad de Sevilla via collaboration with the project \emph{Singularidades, geometría algebraica aritmética y teoría de representaciones} P20-01056.

\bibliographystyle{plain}
\bibliography{references}

\begin{thebibliography}{10}

\bibitem{BaVeWa17}
Velleda Baldoni and Mich{\`e}le Vergne.
\newblock Computation of dilated {K}ronecker coefficients.
\newblock {\em Journal of Symbolic Computation}, 2017.
\newblock In press. With an appendix by Michael Walter. Software available at
  https://amsqi.github.io/kronecker/.

\bibitem{BaVe08}
Welleda Baldoni and Mich\`ele Vergne.
\newblock Kostant partitions functions and flow polytopes.
\newblock {\em Transform. Groups}, 13(3-4):447--469, 2008.

\bibitem{BaHa13}
Alexander Barvinok and J.~A. Hartigan.
\newblock The number of graphs and a random graph with a given degree sequence.
\newblock {\em Random Structures Algorithms}, 42(3):301--348, 2013.

\bibitem{Barv94}
Alexander~I. Barvinok.
\newblock A polynomial time algorithm for counting integral points in polyhedra
  when the dimension is fixed.
\newblock {\em Mathematics of Operations Research}, 19(4):769--779, 1994.

\bibitem{BeRo15}
Matthias Beck and Sinai Robins.
\newblock {\em Computing the continuous discretely}.
\newblock Undergraduate Texts in Mathematics. Springer, New York, second
  edition, 2015.
\newblock Integer-point enumeration in polyhedra, With illustrations by David
  Austin.

\bibitem{BrOrRo09}
Emmanuel Briand, Rosa Orellana, and Mercedes Rosas.
\newblock Quasipolynomial formulas for the {K}ronecker coefficients indexed by
  two two-row shapes (extended abstract).
\newblock In {\em 21st {I}nternational {C}onference on {F}ormal {P}ower
  {S}eries and {A}lgebraic {C}ombinatorics ({FPSAC} 2009)}, Discrete Math.
  Theor. Comput. Sci. Proc., AK, pages 241--252. Assoc. Discrete Math. Theor.
  Comput. Sci., Nancy, 2009.

\bibitem{BrRoTr20}
Emmanuel Briand, Mercedes Rosas, and Stefan Trandafir.
\newblock The chamber complex for the {L}ittlewood-{R}ichardson coefficients of
  ${GL_4}$.
\newblock {\em EAMD 2020: XI Encuentros Andaluces de Matem{\'a}tica Discreta},
  2020.

\bibitem{BrRoTr23}
Emmanuel Briand, Mercedes Rosas, and Stefan Trandafir.
\newblock All linear symmetries of the $\mathit{SU}(3)$ tensor multiplicities.
\newblock \url{https://arxiv.org/abs/2305.08188}, 2023.

\bibitem{CoLiSc11}
David~A. Cox, John~B. Little, and Henry~K. Schenck.
\newblock {\em Toric varieties}, volume 124 of {\em Graduate Studies in
  Mathematics}.
\newblock American Mathematical Society, Providence, RI, 2011.

\bibitem{Fu93}
William Fulton.
\newblock {\em Introduction to toric varieties}, volume 131 of {\em Annals of
  Mathematics Studies}.
\newblock Princeton University Press, Princeton, NJ, 1993.
\newblock The William H. Roever Lectures in Geometry.

\bibitem{GrMc13}
Catherine Greenhill and Brendan~D. McKay.
\newblock Asymptotic enumeration of sparse multigraphs with given degrees.
\newblock {\em SIAM J. Discrete Math.}, 27(4):2064--2089, 2013.

\bibitem{KiToTo07}
R.~C. King, C.~Tollu, and F.~Toumazet.
\newblock The hive model and the polynomial nature of stretched
  {L}ittlewood-{R}ichardson coefficients.
\newblock {\em S\'{e}m. Lothar. Combin.}, 54A:Art. B54Ad, 19, 2005/07.

\bibitem{DeSt03}
Jes{\'u}s A~De Loera and Bernd Sturmfels.
\newblock Algebraic unimodular counting.
\newblock {\em Mathematical programming}, 96:183--203, 2003.

\bibitem{DeHe04}
Jesús A.~De Loera, Raymond Hemmecke, Jeremiah Tauzer, and Ruriko Yoshida.
\newblock Effective lattice point counting in rational convex polytopes.
\newblock {\em Journal of Symbolic Computation}, 38(4):1273 -- 1302, 2004.
\newblock Symbolic Computation in Algebra and Geometry.

\bibitem{McWaWo02}
Brendan~D. McKay, Ian~M. Wanless, and Nicholas~C. Wormald.
\newblock Asymptotic enumeration of graphs with a given upper bound on the
  maximum degree.
\newblock {\em Combin. Probab. Comput.}, 11(4):373--392, 2002.

\bibitem{MeSj99}
Eckhard Meinrenken and Reyer Sjamaar.
\newblock Singular reduction and quantization.
\newblock {\em Topology}, 38(4):699--762, 1999.

\bibitem{MiRoSu21}
Marni Mishna, Mercedes Rosas, and Sheila Sundaram.
\newblock Vector partition functions and {K}ronecker coefficients.
\newblock {\em Journal of Physics A: Mathematical and Theoretical},
  54(20):205204, {M}ay 2021.

\bibitem{MiTr22}
Marni Mishna and Stefan Trandafir.
\newblock Estimating and computing {K}ronecker coefficients: a vector partition
  function approach.
\newblock \url{https://arxiv.org/abs/2210.12128}, 2022.

\bibitem{Od88}
Tadao Oda.
\newblock {\em Convex bodies and algebraic geometry---toric varieties and
  applications. {I}}.
\newblock World Sci. Publishing, Singapore, 1988.

\bibitem{Ra04}
Etienne Rassart.
\newblock A polynomiality property for {L}ittlewood-{R}ichardson coefficients.
\newblock {\em J. Combin. Theory Ser. A}, 107(2):161--179, 2004.

\bibitem{Schr86}
Alexander Schrijver.
\newblock {\em Theory of linear and integer programming}.
\newblock Wiley-Interscience Series in Discrete Mathematics. John Wiley \&
  Sons, Ltd., Chichester, 1986.
\newblock A Wiley-Interscience Publication.

\bibitem{Stur94}
Bernd Sturmfels.
\newblock On vector partition functions.
\newblock {\em Journal of Combinatorial Theory, Series A}, 72(2):302 -- 309,
  1995.

\bibitem{KoVeWo08}
Sven Verdoolaege, Matthias Koeppe, and Kevin~M Woods.
\newblock An implementation of the projection theorem of {B}arvinok and
  {W}oods.
\newblock In {\em DTAI seminar, Date: 2008/03/10-2008/03/10, Location: Leuven,
  Belgium}, 2008.

\bibitem{VeWo08}
Sven Verdoolaege and Kevin Woods.
\newblock Counting with rational generating functions.
\newblock {\em J. Symbolic Comput.}, 43(2):75--91, 2008.

\end{thebibliography}
\end{document}